\documentclass[a4]{amsart}

\usepackage[utf8]{inputenc}
\usepackage[english]{babel}
\usepackage{amssymb}
\usepackage{mathtools}
\usepackage[mathscr]{euscript}
\usepackage{mathrsfs}
\usepackage{stmaryrd}
\usepackage{enumitem}
\usepackage[normalem]{ulem}
\usepackage{color}
\setenumerate[1]{label=(\alph*)}
\setenumerate[2]{label=(\roman*)}
\usepackage[all]{xy}
\usepackage{tikz}
\usetikzlibrary{matrix,arrows}
\usepackage{rotating}
\usepackage{pifont}
\usepackage{xr-hyper}
\usepackage{hyperref}

\hypersetup{
  pdftitle    = {Categories of constructible sheaves},
  pdfauthor   = {Valery A.~Lunts, Olaf
    M.~Schn{\"u}rer, J{\"o}rg Sch{\"u}rmann},
  colorlinks=true}

\newtheorem{theorem}{Theorem}[section]
\newtheorem{proposition}[theorem]{Proposition}
\newtheorem{lemma}[theorem]{Lemma}
\newtheorem{corollary}[theorem]{Corollary}

\theoremstyle{definition}
\newtheorem{definition}[theorem]{Definition}

\newtheorem{para}[theorem]{}

\theoremstyle{remark}
\newtheorem{remark}[theorem]{Remark}

\DeclareMathOperator{\kernel}{Ker}

\DeclareMathOperator{\Hom}{Hom}

\DeclareMathOperator{\Ext}{Ext}

\DeclareMathOperator{\Cone}{Cone}

\DeclareMathOperator{\supp}{supp}

\DeclareMathOperator{\pt}{pt}

\DeclareMathOperator{\HH}{H}
\newcommand{\sing}{{\operatorname{sing}}}

\renewcommand{\epsilon}{{\varepsilon}}

\renewcommand{\phi}{{\varphi}}

\renewcommand{\tilde}[1]{{\widetilde{#1}}}

\newcommand{\define}[1]{{\textbf{#1}}}

\newcommand{\id}{{\operatorname{id}}}
\newcommand{\opp}{{\operatorname{op}}}

\newcommand{\bZ}{{\mathbb{Z}}}

\newcommand{\bC}{{\mathbb{C}}}
\newcommand{\bN}{{\mathbb{N}}}
\newcommand{\bP}{{\mathbb{P}}}

\DeclareMathOperator{\Sh}{Sh}

\DeclareMathOperator{\Mod}{Mod}

\newcommand{\Loc}{{\operatorname{Loc}}}
\newcommand{\Cons}{{\operatorname{Cons}}}
\newcommand{\cs}{{\operatorname{cs}}}

\newcommand{\real}{{\operatorname{real}}}

\DeclareMathOperator{\Top}{Top}

\newcommand{\bd}{{\operatorname{b}}}
\newcommand{\C}{\operatorname{{C}}}
\newcommand{\D}{{\operatorname{D}}}
\newcommand{\K}{{\operatorname{K}}}
\newcommand{\Rd}{\mathsf{R}}

\newcommand{\ol}[1]{{\overline{#1}}}

\newcommand{\ra}{\rightarrow}
\newcommand{\la}{\leftarrow}

\newcommand{\xra}[2][]{\xrightarrow[{#1}]{#2}}
\newcommand{\xla}[2][]{\xleftarrow[{#1}]{#2}}
\newcommand{\sira}{\xra{\sim}}

\newcommand{\sila}{\xla{\sim}}

\newcommand{\Ra}{\Rightarrow}
\newcommand{\xRa}[2][]{\xRightarrow[{#1}]{#2}}
\newcommand{\siRa}{\xRa{\sim}}

\newcommand{\ul}[1]{{\underline{#1}}}

\newcommand{\sptag}[1]{\href{http://stacks.math.columbia.edu/tag/#1}{#1}}

\makeatletter
\newcommand{\mylabel}[2]{#2\def\@currentlabel{#2}\label{#1}}
\makeatother

\numberwithin{equation}{section}

\setcounter{tocdepth}{1}

\selectlanguage{english}

\title{Categories of constructible sheaves}
\author{Valery A.~Lunts, Olaf M.~Schn{\"u}rer}


\address{
  Department of Mathematics\\
  Indiana University\\
  831 East 3rd Street\\
  Bloomington, IN 47405\\
  USA.
National Research University Higher School of Economics, Moscow, Russia}

\email{vlunts@iu.edu}

\address{
  Institut f\"ur Mathematik\\
  Universit{\"a}t Paderborn\\
  Warburger Stra\ss{}e 100\\
  33098 Paderborn\\
  Germany
}

\email{olaf.schnuerer@math.uni-paderborn.de}

\begin{document}
\maketitle

\begin{abstract}
  Given a stratified topological space, we answer the question
  whether the functor from the derived category of constructible
  sheaves to the derived category of sheaves with constructible
  cohomology is an equivalence.

  We also establish basic facts on the category of
  locally constant sheaves and on the category of
  constructible sheaves.
\end{abstract}

\tableofcontents

\section{Introduction}
\label{sec:introduction}

Let $\mathcal{A}$ be an abelian category and $\mathcal{B}\subset \mathcal{A}$ its full abelian subcategory which is closed under extensions in $\mathcal{A}$. One may consider the bounded derived category $\D ^b (\mathcal{A})$ and its full triangulated subcategory $\D ^b _{\mathcal{B}} (\mathcal{A})\subset \D ^b (\mathcal{A})$ consisting of complexes with cohomology in $\mathcal{B}$. We have the obvious {\it realization} functor
\begin{equation}\label{real intr}
\real :\D ^b(\mathcal{B})\to \D ^b_{\mathcal{B}}(\mathcal{A})
\end{equation}
and it is often important to know whether $\real$ is an equivalence.
This question is too general: there are some meaningful results only for concrete examples of $\mathcal{A}$ and $\mathcal{B}$. (See \cite{LaLu} for a purely algebraic example).

Actually, perhaps the most interesting examples come from topology, where $\mathcal{A}$ and $\mathcal{B}$ are certain categories of sheaves.

For example, one may consider a topological space $X$ with a (finite) stratification $\mathcal{S}$. Take for $\mathcal{A}$ the category $\Sh(X)$ of all sheaves of abelian groups and for $\mathcal{B}$ - its full subcategory $\Cons(X,\mathcal{S})$ of sheaves, which are $\mathcal{S}$-constructible (that is whose restriction to every stratum is a local system, possibly of infinite rank). In this work we find conditions which guarantee, that the corresponding realization functor
\begin{equation}\label{real intr 2}\real :\D ^+(\Cons(X,\mathcal{S}))\to
\D ^+_{\mathcal{S}}(\Sh (X))
\end{equation}
is an equivalence. The similar question about constructible sheaves of finite rank is more subtle and there we have only partial results.

If one does not fix a stratification of the space, but rather considers  sheaves which are constructible with respect to some (for example algebraic) stratification, then the functor \eqref{real intr 2} has a better chance to be an equivalence. To our best knowledge the paper \cite{nori-constructible-sheaves} contains the most general results in this setup.

The authors are grateful to J{\"o}rg Sch{\"u}rmann for a very useful discussion of the subject.

\subsection{Conventions}
\label{sec:conventions}

All rings are assumed to be associative and unital, but not
necessarily commutative. The symbol $R$ always denotes a ring.
Modules are left modules, and $\Mod(R)$ denotes the category of
left $R$-modules.

As a general rule, a sheaf on a
topological space $X$ means a sheaf of $R$-modules. The category
of sheaves on $X$ is denoted by $\Sh(X)$. When we speak
about sheaves of sets we say this
explicitly. The same convention applies to presheaves.

If $X$ is a topological space,
$\HH^\sing_q(X,A)$ and
$\HH_\sing^q(X, A)$ denote the $q$-th
singular homology and cohomology, respectively, of $X$ with
values in an abelian group or $R$-module $A$;
$\HH_\sing(X, F)$ denotes the
sheaf cohomology of $X$ with values in a sheaf $F$ of abelian
groups or $R$-modules.

If $\mathcal{A}$ is an additive category, its category of
complexes is denoted by $\C(\mathcal{A})$, its homotopy category
by $\K(\mathcal{A})$.

Given an abelian category $\mathcal{A}$, an abelian subcategory
is a strictly full subcategory $\mathcal{B}$ which is abelian and
whose inclusion functor $\mathcal{B} \ra \mathcal{A}$ is exact.

The (unbounded) derived category of an abelian category
$\mathcal{A}$ is denoted by
$\D(\mathcal{A})$. By $\D^\bd(\mathcal{A})$,
$\D^+(\mathcal{A})$,
$\D^-(\mathcal{A})$ we denote the full
subcategories of $\D(\mathcal{A})$
of objects with bounded, bounded below, bounded above cohomology,
respectively.

If $\mathcal{B}$ is a full subcategory of
$\mathcal{A}$, we denote by $\D_\mathcal{B}(\mathcal{A})$
the full subcategory of $\D(\mathcal{A})$
of objects $E$ whose cohomology sheaves $\HH^p(E)$ are in
$\mathcal{B}$, for all $p \in \bZ$.
If $\mathcal{B}$ is a weak Serre subcategory
of $\mathcal{A}$, then
$\D_\mathcal{B}(\mathcal{A})$ is a thick triangulated subcategory
of $\D(\mathcal{A})$
(see \cite[\sptag{02MN}, \sptag{06UQ}]{stacks-project}).
The categories $\D^\bd_\mathcal{B}(\mathcal{A})$,
$\D^+_\mathcal{B}(\mathcal{A})$,
$\D^-_\mathcal{B}(\mathcal{A})$ are defined in the obvious
way.

We abbreviate $\D(R):=\D(\Mod(R))$ and $\D(X):=\D(\Sh(X))$ and
$\K(X):=\K(\Sh(X))$ and
$\C(X):=\C(\Sh(X))$.

The shift functor of a triangulated category is sometimes denoted
by $\Sigma$ and sometimes by $[1]$.


\section{Formulation of main results}
\label{sec:formulation of main res}

\subsection{Locally constant sheaves}

\begin{definition}
  Let $X$ be a topological space.
  \begin{enumerate}
  \item
    If $M$ is an $R$-module, the sheaf on $X$ associated
    to the presheaf $U \mapsto M$ is denoted by
    $\ul{M}=\ul{M}_X$ and called the \define{constant sheaf with
      stalk $M$}. Equivalently, it can be described as the sheaf
    of local sections of $M \times X \ra X$ where $M$ carries the
    discrete topology.
  \item
    A sheaf on $X$ is called \define{constant} if it is
    isomorphic to a sheaf of the form $\ul{M}_X$.
  \item
    A sheaf $F$ on $X$ is called \define{locally constant} if
    every point of $X$ has an open neighborhood $U$ such that $F|_U$ is
    constant.
  \end{enumerate}
  The category of locally constant sheaves is denoted by $\Loc(X)$. It is a full subcategory $\Sh (X)$.
\end{definition}

\begin{definition} \label{d:sc-sa}

   A topological space $X$ is {\bf simply connected} if $X$ is path connected and satisfies $\pi_1(X,x)=1$ for a/any $x
    \in X$.  We say that $X$ is {\bf acyclic} (resp. {\bf 1-acyclic}) if the sheaf cohomology $\HH ^0(X,\ul{M})=M$ and $\HH ^{>0}(X,\ul{M})=0$ (resp. $\HH ^1(X,\ul{M})=0$) for all $R$-modules $M$.
\end{definition}

\begin{definition}
  \label{d:open-locally-1}
   If (P) is a property of topological spaces, we say that a
  topological space $X$ is \define{locally (P)} if any
  neighborhood of any point $x \in X$ contains an open
  neighborhood $U$ of $x$ having property (P). Equivalently, this
  means that the topology of $X$ has a basis consisting of open
  subsets having property (P).
\end{definition}

\begin{para}
  Every (locally) contractible space is (locally) simply
  connected
  and (locally) acyclic.
\end{para}



\begin{proposition}
  \label{p:fiber-functor-1}
  Let $X$ be non-empty, path connected, locally simply connected
  topological space. Let $x \in X$. Then the following holds.

(1) There is a natural equivalence of categories
  \begin{equation*}
    \Loc(X) \sira \Mod(R \pi_1(X,x)).
  \end{equation*}
Therefore the category $\Loc (X)$ is a (complete and co-complete) Grothendieck abelian category. It has enough projectives and enough injectives.

(2) The inclusion functor $\Loc(X)\hookrightarrow \Sh(X)$ is exact, continuous and co-continuous (i.e. preserves inverse and direct limits).

(3)  Assume in addition that $X$ is locally 1-acyclic. Then $\Loc(X)$ is closed under extensions in $\Sh (X)$.
\end{proposition}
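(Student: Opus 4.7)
The plan is to bootstrap everything from part (1), the classical monodromy equivalence, and then use it to handle (2) and (3).

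For (1), I would send $F \in \Loc(X)$ to its stalk $F_x$ equipped with its natural $\pi_1(X,x)$-action by parallel transport. Local simple connectedness plus path-connectedness let one lift any loop, up to homotopy rel endpoints, through a finite chain of simply connected opens on which $F$ is trivializable, making the monodromy well-defined on $\pi_1(X,x)$. Under these same hypotheses $X$ admits a universal cover $\tilde X \to X$, and an $R\pi_1(X,x)$-module $M$ is sent back to the locally constant sheaf of sections of $\tilde X \times_{\pi_1(X,x)} M \to X$. Checking that the two functors are quasi-inverse is routine. The remaining assertions in (1) then transfer from $\Mod(R\pi_1(X,x))$, which is Grothendieck abelian with enough projectives (e.g.\ free modules) and enough injectives.

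For (2), exactness of the inclusion $\Loc(X) \hookrightarrow \Sh(X)$ is checked on stalks: via (1), the stalk functor at $x$ corresponds to the exact and faithful forgetful functor $\Mod(R\pi_1(X,x)) \to \Mod(R)$, and path-connectedness identifies stalks at all other points with $F_x$ up to $\pi_1$-action, so stalkwise exactness at $x$ implies the same at every point. For continuity and co-continuity I would argue locally on a simply connected open $V$, where every locally constant sheaf is constant: on a connected open of a locally connected space, arbitrary products and colimits of constant sheaves computed in $\Sh(X)$ are again constant with the expected stalk, because global sections on a connected open recover the stalk of a constant sheaf. Hence limits and colimits of locally constant sheaves computed in $\Sh(X)$ remain locally constant and coincide with those computed in $\Loc(X)$ via the equivalence of (1).

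Part (3) is the main obstacle. Given a short exact sequence $0 \to F' \to F \to F'' \to 0$ in $\Sh(X)$ with $F', F'' \in \Loc(X)$, fix $x \in X$ and produce an open neighborhood $U \ni x$ that is simultaneously (i) $1$-acyclic, (ii) connected, and (iii) trivializes both $F'$ and $F''$, so that $F'|_U \cong \ul M$ and $F''|_U \cong \ul N$ for some $R$-modules $M,N$. To produce $U$, first use local $1$-acyclicity together with local constancy to pick a $1$-acyclic open on which $F'$ and $F''$ are constant, then replace it by the connected component of $x$: this is open because $X$ is locally connected (locally simply connected implies locally path-connected), and $1$-acyclicity is preserved because sheaf cohomology splits across a disjoint union of open components. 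The long exact sheaf cohomology sequence on $U$, together with $\HH^1(U, \ul M) = 0$, yields $0 \to M \to \Gamma(U, F|_U) \to N \to 0$. Applying the counit $\ul{\Gamma(U, -)} \to (-)$ of the adjunction $(p^*, p_*)$ with $p \colon U \to \pt$ to the entire extension produces a morphism of short exact sequences in $\Sh(U)$ whose outer vertical maps are isomorphisms (the counit on a constant sheaf is an isomorphism when $U$ is connected), so the five-lemma forces the middle map to be an isomorphism, showing $F|_U \cong \ul{\Gamma(U, F|_U)}$ is constant. Thus $F \in \Loc(X)$, and the delicate step was setting up this good $U$; everything afterwards is formal.
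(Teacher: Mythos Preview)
Your proposal is correct and follows essentially the same strategy as the paper. The paper organizes (1) slightly differently, factoring the monodromy equivalence through an intermediate equivalence $\Mod(RG)\sira\Loc_G(\tilde X)$ of $G$-equivariant locally constant sheaves on the universal cover together with $\Loc(X)\sira\Loc_G(\tilde X)$, whereas you describe the monodromy/associated-bundle functors directly; for (2) and (3) the paper likewise reduces to simply connected (resp.\ $1$-acyclic) opens and runs exactly your counit/five-lemma argument, with a somewhat more explicit verification (Tychonoff versus discrete topology on $\prod M_i$) that products of constant sheaves remain constant on a locally connected space.
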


\begin{para} If $\Loc(X)$ is closed under extensions in $\Sh (X)$, then $\D_{\Loc(X)}(\Sh(X))$ is a thick triangulated subcategory of $\D(\Sh(X))$
and one can ask if the obvious triangulated functor
$\D(\Loc(X))\to \D_{\Loc(X)}(\Sh(X))$ is an equivalence.
\end{para}

\begin{theorem}
  \label{t:main-one-stratum-1}
  Let $X$ be a
  locally simply connected,
  locally acyclic
  topological space. Let $\tilde{X}$ be the disjoint union of the
  universal coverings of all path components of $X$.
  Then the following two conditions are equivalent.
  \begin{enumerate}
  \item
    \label{enum:one-stratum-equiv-D+-1}
    The obvious triangulated functor
    is an equivalence
    \begin{equation}
      \D^+(\Loc(X)) \sira \D^+_\Loc(X).
    \end{equation}
  \item
    \label{enum:one-stratum-H>0-univ-covering-1}
    $\HH^{>0}(\tilde{X}, \ul{M})=0$ for all
    $R$-modules $M$, i.\,e.\ the universal covering of each
    path component of $X$ is acyclic.
  \end{enumerate}
\end{theorem}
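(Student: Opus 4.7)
The plan is to reduce to the case of $X$ path connected (each path component is open under local simple connectedness, so everything splits), identify $\Loc(X) \simeq \Mod(R\pi)$ with $\pi := \pi_1(X,x)$ via Proposition~\ref{p:fiber-functor-1}, and reformulate the two conditions in terms of the universal covering $p \colon \tilde X \to X$. Since $p$ is a covering map, hence a local homeomorphism, $p_!$ (extension by zero along the discrete fibres) is an exact left adjoint to $p^*$, and $R^{>0} p_* = 0$ because the stalks of $p_*$ are simply products over $p^{-1}(x)$. Under $\Loc(X) \simeq \Mod(R\pi)$, the free module $R\pi$ corresponds to $\mathcal{F} := p_! \ul{R}_{\tilde X}$ (stalk $\bigoplus_{g \in \pi} R$), while the coinduced module $\Hom_R(R\pi, M)$ corresponds to $p_* \ul{M}_{\tilde X}$ (stalk $\prod_{g \in \pi} M$). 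Combining the adjunction $p_! \dashv p^*$ with the exactness of both functors yields, for any sheaf $G$ on $X$,
\begin{equation*}
\Ext^n_{\Sh(X)}(\mathcal{F}, G) = \Ext^n_{\Sh(\tilde X)}(\ul{R}_{\tilde X}, p^* G) = \HH^n(\tilde X, p^* G);
\end{equation*}
when $G$ is locally constant, $p^* G \cong \ul{G_x}_{\tilde X}$ because $\tilde X$ is simply connected. Likewise $\HH^n(\tilde X, \ul{M}) = \HH^n(X, p_* \ul{M})$ by Leray.

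For (a)$\Rightarrow$(b), apply (a) with $\ul R_X$ and the locally constant sheaf $p_*\ul M$:
\begin{equation*}
\HH^n(\tilde X, \ul M) = \HH^n(X, p_*\ul M) = \Ext^n_{\Sh(X)}(\ul R_X, p_*\ul M) \overset{(\mathrm a)}{=} \Ext^n_{\Loc(X)}(\ul R_X, p_*\ul M) = \Ext^n_{R\pi}(R, \Hom_R(R\pi, M)),
\end{equation*}
and the last group vanishes for $n > 0$ by Shapiro's lemma.

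For (b)$\Rightarrow$(a), condition (b) together with the displayed Ext-formula shows that $\mathcal{F}$, and hence any direct sum of copies of $\mathcal{F}$, is $\Ext$-acyclic in $\Sh(X)$ against any locally constant target. Every $F \in \Loc(X)$ admits a resolution in $\Loc(X)$ by direct sums of $\mathcal{F}$, obtained by lifting a free resolution of $F_x$ over $R\pi$. Computing $\Ext^*_{\Sh(X)}(F, G)$ via this resolution gives $\Ext^n_{\Sh(X)}(F, G) \cong \Ext^n_{R\pi}(F_x, G_x) \cong \Ext^n_{\Loc(X)}(F, G)$. Since $\Loc(X)$ is closed under extensions in $\Sh(X)$ by Proposition~\ref{p:fiber-functor-1}(3), this agreement of all Ext groups yields full faithfulness of $\real$ on $\D^+$ by standard d\'evissage, and essential surjectivity follows by inductively realizing the upper truncations $\tau^{\leq n} C$ of any $C \in \D^+_\Loc(X)$ using the matching of higher Exts.

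The main obstacle is essential surjectivity in the unbounded-above $\D^+$ setting: while the $\D^\bd$ case is a routine induction on cohomological amplitude using extension-closure and the Ext-agreement, the passage to $\D^+$ requires a homotopy-colimit or Cartan--Eilenberg argument to assemble the preimages of the successive truncations into a single object of $\D^+(\Loc(X))$. Verifying the sheaf-theoretic properties of $p_!$ and $p_*$ for the potentially infinite cover $p$ is a further technical point worth checking with care.
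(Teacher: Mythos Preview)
Your approach is essentially the paper's: both pivot on the projective generator $\mathcal F=p_!\ul R_{\tilde X}$ of $\Loc(X)$, the adjunction $(p_!,p^*)$, and the resulting identification $\Ext^n_{\Sh(X)}(\mathcal F,G)\cong \HH^n(\tilde X,\ul{G_x})$; both then use d\'evissage for full faithfulness and homotopy colimits of truncations to pass from $\D^\bd$ to $\D^+$ for essential surjectivity (the paper carries this out in detail, so your ``main obstacle'' is handled exactly as you suggest).

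The one genuine difference is the direction (a)$\Rightarrow$(b). The paper factors it as (a)$\Rightarrow$(a)$'$$\Rightarrow$(b)$'$$\Leftrightarrow$(b), where (b)$'$ is the Hom-isomorphism condition for $\mathcal F$; your route via $p_*\ul M$, the Leray degeneration $R^{>0}p_*\ul M=0$, and Shapiro for the coinduced module $\Hom_R(R\pi,M)$ is a clean alternative that avoids introducing the auxiliary bounded condition. One caution: your justification ``the stalks of $p_*$ are simply products over $p^{-1}(x)$'' does not by itself give $R^{>0}p_*=0$; you need local acyclicity of $X$ (hence of $\tilde X$, via the local homeomorphism $p$) to see that $\HH^{>0}(p^{-1}(U),\ul M)=\prod_g \HH^{>0}(U_g,\ul M)=0$ for a cofinal system of evenly covered $U$.
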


\subsection{Constructible sheaves}

\begin{definition}
  \label{d:stratification-1}
  Let $X$ be a topological space.  A \define{stratification of
    $X$} is a finite partition $\mathcal{S}$ of $X$ into
  non-empty locally closed subsets, called \define{strata}, such
  that the closure of each stratum is a union of strata,
  i.\,e.\
  $\ol{S}=\bigcup_{T \subset \ol{S}} T$ for each
  $S \in \mathcal{S}$.  A \define{stratified space} is a
  topological space together with a stratificaton.
  The following conditions on a stratification
  $\mathcal{S}$ of $X$ are used.

  \begin{enumerate}
  \item[\mylabel{enum:loc-sc}{(loc-sc)}]
    Each stratum is locally simply connected.
  \item[\mylabel{enum:loc-sa}{(loc-sa)}]
    Each stratum is locally 1-acyclic.
  \end{enumerate}
\end{definition}

\begin{definition}
  Let $(X, \mathcal{S})$ be a stratified space. A
  sheaf $F$ on $X$ is \define{$\mathcal{S}$-constructible}
  if $F|_S \in \Loc(S)$ for all $S \in \mathcal{S}$.
  We write $\Cons(X, \mathcal{S})$ for
  the full subcategory of $\Sh(X)$ of
  $\mathcal{S}$-constructible objects. (So $\Cons(S,\{S\})=\Loc(S)$.)

  A complex $F$ in $\Sh(X)$ is
  \define{$\mathcal{S}$-constructible}
  if all its cohomology sheaves $\HH^p(F)$ are
  $\mathcal{S}$-constructible.
  We write
  $\D_\mathcal{S}(X):=\D_{\Cons(X,\mathcal{S})}(X)$
  for
  the full subcategory of $\D(X)$ of
  $\mathcal{S}$-constructible objects.
\end{definition}

\begin{proposition}
  \label{p:loc-sc-grothendieck-1}
  Let $(X, \mathcal{S})$ be a
  \ref{enum:loc-sc}-stratified space.
  Then $\Cons(X, \mathcal{S})$ is a Grothendieck abelian category
  and the
  inclusion functor
  $\Cons(X, \mathcal{S}) \ra \Sh(X)$ is exact and cocontinuous;
 in
  particular, coproducts in $\Sh(X)$ of families of
  objects in $\Cons(X, \mathcal{S})$ are again in $\Cons(X,
  \mathcal{S})$.

  Assume in addition that the stratification is
  \ref{enum:loc-sa}. Then
  $\Cons(X, \mathcal{S})$ is a weak Serre subcategory of
  $\Sh(X)$.  In particular, $\D_\mathcal{S}(X)$ is a thick
  triangulated subcategory of $\D(X)$
  which is closed under
  coproducts in $\D(X)$.
\end{proposition}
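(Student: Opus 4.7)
My plan is to reduce every claim to the corresponding statement about $\Loc(S) \subset \Sh(S)$ stratum by stratum, and invoke Proposition \ref{p:fiber-functor-1}. The key observation is that for the inclusion $i_S \colon S \hra X$ of any subspace, the pullback $i_S^* = (-)|_S \colon \Sh(X) \ra \Sh(S)$ is exact and cocontinuous, while by definition $F \in \Cons(X, \mathcal{S})$ means $i_S^* F \in \Loc(S)$ for every $S \in \mathcal{S}$.

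To apply Proposition \ref{p:fiber-functor-1}, I first note that each stratum is locally simply connected, hence locally path-connected, so its path components $\{S_\alpha\}$ form a disjoint open cover; this yields product decompositions $\Sh(S) = \prod_\alpha \Sh(S_\alpha)$ and $\Loc(S) = \prod_\alpha \Loc(S_\alpha)$, and Proposition \ref{p:fiber-functor-1} applies to each $S_\alpha$. In particular, $\Loc(S) \hra \Sh(S)$ is exact and cocontinuous, and under hypothesis \ref{enum:loc-sa} also closed under extensions. Combining with the exactness and cocontinuity of $i_S^*$, kernels, cokernels, coproducts, and filtered colimits in $\Sh(X)$ of diagrams in $\Cons(X, \mathcal{S})$ restrict on each stratum to the same operations in $\Sh(S)$ and stay in $\Loc(S)$. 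This gives exactness and cocontinuity of $\Cons(X, \mathcal{S}) \hra \Sh(X)$, and under \ref{enum:loc-sa} also closure under extensions in $\Sh(X)$, hence the weak Serre property.

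To promote $\Cons(X, \mathcal{S})$ to a Grothendieck category, cocompleteness is immediate from the preceding paragraph, and the AB5 axiom is inherited from $\Sh(X)$: filtered colimits in $\Cons(X, \mathcal{S})$ are computed in $\Sh(X)$ and are exact there, and this exactness transports back via the exact inclusion. The existence of a set of generators is the main obstacle. The natural approach is to glue generators stratum by stratum: for each stratum $S$ and each path component $S_\alpha$, the equivalence $\Loc(S_\alpha) \simeq \Mod(R\pi_1(S_\alpha))$ furnishes a generator (the regular representation); one then extends these to constructible sheaves on $X$, for instance via $j_!$ along open inclusions $U \hra X$ with $U$ a union of strata (so that $j_!$ of an $\mathcal{S}|_U$-constructible sheaf remains $\mathcal{S}$-constructible on $X$), and verifies that the resulting family generates $\Cons(X, \mathcal{S})$. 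Checking this last point carefully is the technical heart of the proof.

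Finally, once $\Cons(X, \mathcal{S})$ is a weak Serre subcategory of $\Sh(X)$, thickness of $\D_\mathcal{S}(X)$ in $\D(X)$ is the general fact cited in the Conventions. Closure of $\D_\mathcal{S}(X)$ under coproducts in $\D(X)$ follows because cohomology in $\D(X)$ commutes with coproducts and $\Cons(X, \mathcal{S})$ is closed under coproducts in $\Sh(X)$ by the first part.
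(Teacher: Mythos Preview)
Your proposal is correct and follows essentially the same strategy as the paper: reduce every claim to the analogous statement about $\Loc(S)\subset\Sh(S)$ using that each restriction $i_S^*\colon\Sh(X)\ra\Sh(S)$ is exact and cocontinuous. The paper's own proof is a single sentence invoking Proposition~\ref{p:Loc-lsc-lsa-1} (which already treats arbitrary locally simply connected spaces, without any connectedness hypothesis), so your detour through path components and Proposition~\ref{p:fiber-functor-1} is unnecessary but harmless.

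Two small remarks on your generator sketch. First, since strata are only locally closed, it is cleaner to use $(i_S)_!$ for the locally closed embedding $i_S\colon S\hra X$ directly rather than restricting to open unions of strata; these extensions by zero land in $\Cons(X,\mathcal{S})$ and the family $\{(i_S)_! G_S\}_{S\in\mathcal{S}}$ (with $G_S$ a generator of $\Loc(S)$) generates. Second, the verification that this family generates is not via the adjunction $(i_S)_!\dashv i_S^!$ (which involves $i_S^!$ rather than $i_S^*$), but via induction on the number of strata using the recollement: for $F\neq 0$, either $F|_U\neq 0$ for an open stratum $U$ (and then $j_!\dashv j^*$ gives a nonzero map $j_!G_U\ra F$), or $F$ is supported on the closed complement and one applies the inductive hypothesis there. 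The paper suppresses all of this, so your level of detail already exceeds it.
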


\subsection{Realization functor} Let $(X, \mathcal{S})$ be a
  \ref{enum:loc-sc}-\ref{enum:loc-sa}-stratified space. Then
  $\Cons(X, \mathcal{S})$ is an abelian category and $\D_\mathcal{S}(X)$ is a thick triangulated subcategory of $\D(X)$. One may ask if the obvious
  {\it realization} triangulated functor
  \begin{equation}
    \label{eq:real}
    \real \colon \D(\Cons(X, \mathcal{S})) \ra \D_{\mathcal{S}}(X).
  \end{equation}
is an equivalence.

\begin{definition} Let $(X, \mathcal{S})$ be a
  \ref{enum:loc-sc}-\ref{enum:loc-sa}-stratified space.
  This stratification $\mathcal{S}$ is
  called \ref{enum:l_*-preserves-cons} if it satisfies the
  following additional condition.
  \begin{enumerate}
  \item[\mylabel{enum:l_*-preserves-cons}{(cons)}]
    If $l \colon S \ra X$ is the inclusion of any stratum $S \in
    \mathcal{S}$, then
    \textcolor{red}{(cf. \cite[2.1.13, p.~61]{BBD})}
      the right derived functor $\Rd l_* \colon \D^+(S) \ra
      \D^+(X)$
      of $l_* \colon \Sh(S) \ra \Sh(X)$
      maps objects of $\Loc(S)$ (viewed as complexes concentrated
      in degree zero) to
      $\D^+_\mathcal{S}(X)$.
    \end{enumerate}
 \end{definition}

 \begin{lemma}\label{ext of funct and commute with real}
 Let $(X, \mathcal{S})$ be a
  \ref{enum:loc-sc}-\ref{enum:loc-sa}-\ref{enum:l_*-preserves-cons}
  stratified space and let $l \colon S \ra X$ be the inclusion of any stratum $S \in \mathcal{S}$.  Then the following holds.

  (1) The functor $l_* \colon \Sh(S) \ra \Sh(X)$ restricts to the left exact functor between the abelian subcategories
  $$l_*\colon \Loc (S)\ra \Cons(X, \mathcal{S})$$
  Denote by
  $$\Rd_\cs l_* \colon \D^+(\Loc (S)) \ra \D^+(\Cons(X, \mathcal{S}))$$
  its derived functor.

  (2)   Then there is a canonical 2-morphism
  \begin{equation}
    \label{eq:real-e_*-sigma-1}
    \sigma \colon \real \circ \Rd_\cs e_* \Ra \Rd e_* \circ \real
  \end{equation}
  between functors from $\D^+(\Loc(S))$ to
  $\D^+(X)$, as illustrated by the diagram
  \begin{equation*}
    \xymatrix{
      {\D^+(\Loc(S))}
      \ar[r]^-{\Rd_\cs l_*} \ar[d]_-{\real} &
      {\D^+(\Cons(X, \mathcal{S}))}
      \ar[d]^-{\real}
      \ar@{=>}[ld]_-{\sigma}\\
      {\D^+(S)}
      \ar[r]^-{\Rd l_*} &
      {\D^+(X).}
    }
  \end{equation*}
 \end{lemma}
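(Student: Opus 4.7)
For part (1), the restriction of $l_*$ to a functor $\Loc(S) \to \Cons(X, \mathcal{S})$ follows directly from the hypothesis \ref{enum:l_*-preserves-cons}: for $F \in \Loc(S)$, we have $\Rd l_* F \in \D^+_\mathcal{S}(X)$, so in particular $l_* F = \HH^0(\Rd l_* F)$ is $\mathcal{S}$-constructible. To verify that this restricted functor is left exact, take a short exact sequence $0 \to F' \to F \to F'' \to 0$ in $\Loc(S)$; by Proposition~\ref{p:fiber-functor-1}(2) it remains exact in $\Sh(S)$, and applying the left exact $l_* \colon \Sh(S) \to \Sh(X)$ yields an exact sequence $0 \to l_* F' \to l_* F \to l_* F''$ in $\Sh(X)$ whose terms all lie in $\Cons(X, \mathcal{S})$ by what was just shown. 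Since by Proposition~\ref{p:loc-sc-grothendieck-1} the inclusion $\Cons(X, \mathcal{S}) \hookrightarrow \Sh(X)$ is exact, this sequence is also exact in $\Cons(X, \mathcal{S})$. Finally, $\Loc(S)$ has enough injectives by Proposition~\ref{p:fiber-functor-1}(1), so the right derived functor $\Rd_\cs l_* \colon \D^+(\Loc(S)) \to \D^+(\Cons(X, \mathcal{S}))$ exists and is computed via injective resolutions in $\Loc(S)$ followed by termwise application of $l_*$.

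For part (2), I would construct $\sigma$ by comparing two resolutions. Given $F \in \C^+(\Loc(S))$, choose a quasi-isomorphism $F \to I^\bullet$ with $I^\bullet$ a bounded-below complex of injectives in $\Loc(S)$; then $l_* I^\bullet$ represents $\Rd_\cs l_* F$, and under $\real$ it is viewed as an object of $\D^+(X)$. Since $\Sh(S)$ has enough injectives, next choose a quasi-isomorphism $I^\bullet \to J^\bullet$ with $J^\bullet$ a bounded-below complex of injectives in $\Sh(S)$; then $l_* J^\bullet$ represents $\Rd l_* (\real F)$ in $\D^+(X)$. Applying $l_*$ to the composite $I^\bullet \to J^\bullet$ gives a morphism $l_* I^\bullet \to l_* J^\bullet$ in $\C^+(X)$, whose image in $\D^+(X)$ is the desired $\sigma_F$.

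The main obstacle is verifying that this $\sigma_F$ is independent of the choices of $I^\bullet$ and $J^\bullet$ and is natural in $F$. This is handled by the standard homotopy theory of injective resolutions: the quasi-isomorphism $F \to I^\bullet$ in $\C^+(\Loc(S))$ is unique up to homotopy equivalence, and the lift $I^\bullet \to J^\bullet$ is unique up to chain homotopy because $J^\bullet$ is termwise injective in $\Sh(S)$; applying the additive functor $l_*$ then yields a morphism in $\C^+(X)$ canonical up to chain homotopy, hence a well-defined morphism in $\D^+(X)$. Naturality in $F$ is checked by lifting a morphism $F \to F'$ through compatible resolutions on each side. (Equivalently, one can express $\sigma$ as arising from the universal property of $\Rd l_*$ applied to the canonical map $l_* \to \Rd l_* \circ \text{(incl)}$ on $\Loc(S)$, but the resolution-based construction above is the most direct route.)
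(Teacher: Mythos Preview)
Your proposal is correct and follows essentially the same approach as the paper. The paper's argument (given later in Lemma~\ref{t:preserve-cons-1} and \ref{para:sigma-and-tau}) proves part~(1) exactly as you do, by reading off $l_*F=\HH^0(\Rd l_*F)\in\Cons(X,\mathcal{S})$ from the \ref{enum:l_*-preserves-cons} hypothesis, and constructs $\sigma$ by the same comparison of injective resolutions $I^\bullet\to J^\bullet$ in $\Loc(S)$ and $\Sh(S)$, noting (as you do parenthetically) that this is equivalent to invoking the universal property of the right derived functor.
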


 \begin{theorem}
  \label{t:equivalent-conditions-for-equivalence-1}
  For a
  \ref{enum:loc-sc}-\ref{enum:loc-sa}-\ref{enum:l_*-preserves-cons}-stratified
  space $(X, \mathcal{S})$, the following conditions are equivalent.
  \begin{enumerate}
  \item
    \label{enum:equivalence-X}
    The functor
    \begin{equation}
      \label{eq:equi-D+-X}
      \real \colon \D^+(\Cons(X, \mathcal{S})) \ra
      \D_\mathcal{S}^+(X)
    \end{equation}
    is an equivalence.
  \item
    \label{enum:equiv-S-and-sigma-iso}
    For all strata $S \in \mathcal{S}$,
    the functor
    \begin{equation}
      \label{eq:equi-D+-S-1}
      \real \colon \D^+(\Loc(S)) \ra
      \D_\Loc^+(S)
    \end{equation}
    is an equivalence (cf.\
    Remark~\ref{r:stratawise-equiv-1}),
    and
    \begin{equation}
      \label{eq:equi-D+-sigma-1}
      \sigma \colon \real \circ \Rd_\cs l_*
      \xRa{\eqref{eq:real-e_*-sigma-1}} \Rd l_* \circ
      \real
    \end{equation}
    is a 2-isomorphism, where $l
    \colon S \ra X$ denotes the inclusion.
    \end{enumerate}
    \end{theorem}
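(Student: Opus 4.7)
I would prove the two implications separately. Direction $(\ref{enum:equivalence-X}) \Rightarrow (\ref{enum:equiv-S-and-sigma-iso})$ is essentially formal, extracting the stratawise equivalence and the $\sigma$-isomorphism simultaneously from the equivalence on $X$ via adjunctions and the universal property of derived functors. The substantive content is the converse, which I would prove by induction on $|\mathcal{S}|$, using a recollement split off by a closed stratum and the $\sigma$-compatibility to match the two recollements on the nose.

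\textbf{The easy direction.} Assume $\real$ is an equivalence on $(X, \mathcal{S})$. For a single stratum $S$ with inclusion $l$, both composites $\real \circ \Rd_\cs l_*$ and $\Rd l_* \circ \real$ take values in $\D^+_\mathcal{S}(X)$ (the second by condition~\ref{enum:l_*-preserves-cons}), and under the equivalence $\real \colon \D^+(\Cons(X, \mathcal{S})) \sira \D^+_\mathcal{S}(X)$ both can be viewed as right derived functors of the underlying left exact functor $l_* \colon \Loc(S) \to \Cons(X, \mathcal{S})$. Uniqueness of right derived functors then forces $\sigma$ (the canonical comparison) to be a 2-isomorphism. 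With $\sigma$ in hand, the stratawise equivalence follows: the derived adjunctions $(l_!, \Rd l_*)$ on $\Sh$ and $(l_!, \Rd_\cs l_*)$ on $\Cons$, together with the equivalence on $X$ and $\sigma$, identify $\Ext^n_{\Sh(S)}(F,G)$ with $\Ext^n_{\Loc(S)}(F,G)$ for all $F,G \in \Loc(S)$, giving fully faithfulness on $S$; essential surjectivity is obtained by an analogous identification.

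\textbf{The main direction by induction.} For $(\ref{enum:equiv-S-and-sigma-iso}) \Rightarrow (\ref{enum:equivalence-X})$ I induct on $|\mathcal{S}|$. If $|\mathcal{S}| = 1$ the claim is the stratawise hypothesis. For the inductive step, choose a stratum $S_0 \in \mathcal{S}$ minimal for the closure order, so $Z := S_0$ is closed in $X$ with open complement $U := X \setminus Z$ a union of strata; write $i \colon Z \hra X$ and $j \colon U \hra X$. The restricted stratification $\mathcal{S}|_U$ has fewer strata and inherits conditions~\ref{enum:loc-sc}, \ref{enum:loc-sa}, \ref{enum:l_*-preserves-cons}, so by induction $\real$ is an equivalence on $(U, \mathcal{S}|_U)$, and by hypothesis on $Z = S_0$. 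One then has parallel localisation sequences
\begin{equation*}
  \D^+(\Cons(Z, \{Z\})) \xra{i_*} \D^+(\Cons(X, \mathcal{S})) \xra{j^*} \D^+(\Cons(U, \mathcal{S}|_U))
\end{equation*}
and the analogue for $\D^+_\mathcal{S}(X)$, intertwined by $\real$: the exact functors $i_*$, $i^*$, $j_!$, $j^*$ commute strictly with $\real$, while $\Rd j_*$ commutes with $\real$ via a generalised $\sigma$. A 5-lemma applied to $\Hom(F, -)$ of the fundamental triangle $i_* i^! G \ra G \ra \Rd j_* j^* G \ra$, using the derived adjunctions $\Hom(F, \Rd j_* j^* G) = \Hom(j^* F, j^* G)$ and $\Hom(F, i_* i^! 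G) = \Hom(i^* F, i^! G)$, reduces fully faithfulness on $X$ to the corresponding statements on $Z$ and $U$. Essential surjectivity is handled by assembling preimages along the same triangles.

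\textbf{Main obstacle.} The central difficulty is upgrading the hypothesised single-stratum $\sigma$-isomorphism to the required 2-isomorphism $\real \circ \Rd_\cs j_* \cong \Rd j_* \circ \real$ for the open inclusion $j \colon U \hra X$ of a union of strata. My plan is a secondary induction inside $U$: iteratively decompose $U$ into a closed stratum and its open complement, express $\Rd j_*$ as an assembly of per-stratum $\Rd_\cs l_*$'s via the attaching triangles, and invoke the hypothesised $\sigma$'s stratum-by-stratum. In parallel, one must verify that condition~\ref{enum:l_*-preserves-cons} propagates under these compositions, i.e.\ that $\Rd j_*$ sends $\D^+_{\mathcal{S}|_U}(U)$ into $\D^+_\mathcal{S}(X)$. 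Once this gluing is in place both recollements coincide under $\real$ and the 5-lemma argument concludes.
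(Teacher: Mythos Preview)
Your outline is broadly sound and close to the paper, but the organization differs in a way worth noting, and there are two small slips.

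For $(\ref{enum:equivalence-X})\Rightarrow(\ref{enum:equiv-S-and-sigma-iso})$ the paper reverses your order: it first shows that $\real_S$ is an equivalence directly from the fact that $l_!$ is fully faithful on both the $\Cons$ and $\Sh$ levels and commutes strictly with $\real$ (so fully faithfulness of $\real_X$ transfers to $\real_S$, and essential surjectivity follows from $l^*l_!\cong\id$); only then is $\sigma$ shown to be an isomorphism, via the adjunction-compatibility diagram and Yoneda. Your ``uniqueness of right derived functors'' argument for $\sigma$ can be made to work, but note that the adjunction you invoke, ``$(l_!,\Rd l_*)$'', is not an adjunction; you want $(l_!,\Rd l^!)$ (equivalently, fully faithfulness of $l_!$) or $(l^*,\Rd l_*)$.

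For $(\ref{enum:equiv-S-and-sigma-iso})\Rightarrow(\ref{enum:equivalence-X})$ the paper does not run a single induction on $|\mathcal{S}|$ with a recollement 5-lemma. Instead it isolates your ``main obstacle'' as a standalone proposition: assuming $\sigma$ is a 2-isomorphism for each stratum inclusion, one proves by induction on the number of strata in the source that $\sigma$ (and simultaneously $\tau$) is a 2-isomorphism for every stratified locally closed embedding. With this upgrade in hand, the main theorem is then proved without further induction by a direct d\'evissage: to check fully faithfulness it suffices to compute $\Hom(s_!A,\Rd_\cs t_*B)$ for strata $s,t$, and the $\sigma$-isomorphism plus the $(t^*,\Rd t_*)$-adjunction reduce this to $\Hom_{\D(\Loc(T))}(t^*s_!A,B)$, which is handled by the stratawise hypothesis. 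Your recollement approach is a valid alternative, but be aware that your 5-lemma on the triangle $i_*\Rd i^! G\to G\to\Rd j_*j^*G$ needs not only the generalised $\sigma$ for $j$ but also the generalised $\tau$ for $i$ (so that $\Rd_\cs i^!$ matches $\Rd i^!$); the paper obtains these together, observing that $\sigma$ for $j$ is an isomorphism iff $\tau$ for $i$ is, via the morphism of standard triangles.
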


\begin{remark} \label{r:stratawise-equiv-1} By theorem \ref{t:main-one-stratum-1} the functor \eqref{eq:equi-D+-S-1} in the above theorem is an equivalence if $S$ is locally simply connected, locally acyclic and $\HH^{>0}(\tilde{S}, \ul{M})=0$ for all
    $R$-modules $M$, where $\tilde{S}$ is the disjoint union of the universal coverings of path component of $S$.
\end{remark}

Next we  give some sufficient conditions for the 2-morphism \eqref{eq:equi-D+-sigma-1} to be an isomorphism.

For simplicity we restrict ourselves to stratified spaces $(X,\mathcal{S})$ with a {\it normal structure} (see \ref{def-normal-str} for precise definition). This means that all strata are connected manifolds and for every stratum $T$ and every point $x\in T$ there exists an open neighbourhood $x\in U\subset X$ which is homeomorphic to the product of $U\cap T$ with the cone over the {\it link} $L_x$ of $T$ at $x$. If $S$ is another stratum s.t. $T\subset \overline{S}$, then $L_{x,S}:=L_x \cap S$ is a manifold with finitely many connected components $L_{x,S,i}$. The next result follows from Theorem
\ref{our-original-example}.

\begin{theorem}\label{our-original-example-1} Let $(X,\mathcal{S})$ be a stratified space with a normal structure. Assume that each stratum $S$ is a $K(\pi ,1)$ manifold, i.e. its universal covering space is contractible. Assume in addition that for any strata $S,T$, such that $T\subset \overline{S}$,  and any point $x\in T$, each connected component $L_{x,S,i}$ of the manifold $L_{x,S}$ is also a $K(\pi ,1)$-space.

Then the equivalent conditions of Theorem  \ref{t:equivalent-conditions-for-equivalence-1} hold if for each pair of strata $T\subset \overline{S}$, each point $x\in T$ and each connected component
$L_{x,S,i}$ of the manifold $L_{x,S}$ the following holds:
The kernel of the homomorphism $\pi _1(L_{x,S,i})\to \pi _1(S)$ is a finite subgroup whose order is prime to the characteristic of $R$. (In particular, if this homomorphism is injective)
\end{theorem}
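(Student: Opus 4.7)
The plan is to verify condition (b) of Theorem~\ref{t:equivalent-conditions-for-equivalence-1}: the stratawise equivalence~\eqref{eq:equi-D+-S-1} for every stratum $S$, and the 2-isomorphism~\eqref{eq:equi-D+-sigma-1} for every stratum inclusion $l\colon S\hra X$.

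The stratawise equivalence is immediate from Remark~\ref{r:stratawise-equiv-1}. Each stratum $S$ is a manifold, hence locally Euclidean and therefore locally contractible; this makes $S$ both locally simply connected and locally acyclic. The $K(\pi,1)$ hypothesis says that the universal covering $\tilde S$ is contractible, so in particular $\HH^{>0}(\tilde S,\ul M)=0$ for every $R$-module $M$. Theorem~\ref{t:main-one-stratum-1} therefore gives~\eqref{eq:equi-D+-S-1}.

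The core of the proof is showing that $\sigma$ is a 2-isomorphism. It suffices to check this on stalks for $F\in\Loc(S)$ (concentrated in degree zero) at every $x\in X$. If $x\in S$ then both stalks coincide with $F_x$. For $x$ lying in a different stratum $T$ with $T\subset\overline S$, I would use the normal structure to fix a cofinal system of open neighbourhoods $U$ of $x$ with $U\cong (U\cap T)\times\mathring C(L_x)$ and $U\cap T$ contractible, so that $U\cap S\cong (U\cap T)\times(0,1)\times L_{x,S}$ is homotopy equivalent to $L_{x,S}=\bigsqcup_i L_{x,S,i}$. Under the equivalence $\Loc(S)\cong\Mod(R\pi_1(S,s))$ of Proposition~\ref{p:fiber-functor-1}, the sheaf $F$ corresponds to an $R\pi_1(S,s)$-module $V$. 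The $K(\pi,1)$ assumption on $L_{x,S,i}$ combined with the contractibility of $U\cap T$ gives
\begin{equation*}
\HH^p(U\cap S,F)=\bigoplus_i \HH^p\bigl(\pi_1(L_{x,S,i}),V_i\bigr),
\end{equation*}
where $V_i$ denotes $V$ restricted along the homomorphism $\pi_1(L_{x,S,i})\to\pi_1(S,s)$. Writing $K_i$ for the kernel of this homomorphism and applying the Hochschild--Serre spectral sequence to $K_i\hra\pi_1(L_{x,S,i})\sra \pi_1(L_{x,S,i})/K_i$, the hypothesis that $|K_i|$ is invertible in $R$ yields $\HH^{>0}(K_i,-)=0$ on $R[K_i]$-modules (Maschke), so the spectral sequence collapses to
\begin{equation*}
\HH^p(U\cap S,F)=\bigoplus_i\HH^p\bigl(\pi_1(L_{x,S,i})/K_i,V\bigr),
\end{equation*}
a group-cohomology expression depending on $V$ only through its restriction along the subgroup $\pi_1(L_{x,S,i})/K_i\hra\pi_1(S,s)$. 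Passing to the colimit over the cofinal system of neighbourhoods identifies the stalk of $\Rd l_*\real F$ at $x$ with this expression.

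What remains is to check that the stalk of $\real(\Rd_\cs l_* F)$ at $x$ realises the same group cohomology via the canonical map $\sigma_F$. Here the $K(\pi,1)$ assumption is used a second time: each connected component of $U\cap S$ is a $K(\pi,1)$-space (a product of a contractible space with a $K(\pi,1)$), so the local form of Theorem~\ref{t:main-one-stratum-1} ensures that derived global sections computed inside $\Loc(U\cap S)$ agree with those computed inside $\Sh(U\cap S)$. The main obstacle, and the most delicate step, is tracing this identification through the definition of $\sigma$: one must relate an injective resolution $F\to I^\bullet$ of $F$ in $\Loc(S)$ to an injective resolution $F\to J^\bullet$ in $\Sh(S)$, verify that on the shrinking neighbourhoods $U\cap S$ the induced map $I^\bullet(U\cap S)\to J^\bullet(U\cap S)$ becomes a quasi-isomorphism after passing to the colimit, and finally confirm that this colimit quasi-isomorphism is precisely the tautological identification between the two group-cohomology computations above. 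Once this bookkeeping is completed, $\sigma$ is an isomorphism of stalks at every $x\in X$, hence a 2-isomorphism, so the equivalent conditions of Theorem~\ref{t:equivalent-conditions-for-equivalence-1} hold.
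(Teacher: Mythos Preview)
Your approach is correct in spirit and contains the key algebraic ingredient (Maschke plus Hochschild--Serre), but you have organised it in a way that creates unnecessary work and leaves the crucial step unfinished. The paper does not compute both sides of $\sigma_F$ for a general $F\in\Loc(S)$ and then match them; instead it reduces, via Corollary~\ref{suf-cond-for-eq}, to a single acyclicity criterion: every injective $I\in\Loc(S)$ must restrict to a $\Gamma(L_{x,S},-)$-acyclic sheaf. Under the $K(\pi,1)$ hypothesis on the link together with Theorem~\ref{t:main-one-stratum-1} applied to $L_{x,S,i}$, this becomes the purely algebraic statement that the restriction of an injective $R\pi_1(S)$-module along $\pi_1(L_{x,S,i})\to\pi_1(S)$ is acyclic for $(-)^{\pi_1(L_{x,S,i})}$. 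Your Hochschild--Serre computation, applied with $V=I$ injective, proves exactly this: the spectral sequence collapses to $\HH^p(\pi_1(L_{x,S,i})/K_i,I)$, and since $\pi_1(L_{x,S,i})/K_i\hookrightarrow\pi_1(S)$ the module $I$ restricts to an injective there, so these groups vanish for $p>0$.

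The step you flag as ``bookkeeping'' is not bookkeeping at all: for $I^\bullet(U\cap S)\to J^\bullet(U\cap S)$ to be a quasi-isomorphism you need each $I^j|_{U\cap S}$ to be $\Gamma(U\cap S,-)$-acyclic. Your invocation of Theorem~\ref{t:main-one-stratum-1} on $U\cap S$ tells you that injectives of $\Loc(U\cap S)$ are $\Gamma$-acyclic, but $I^j|_{U\cap S}$ is only the restriction of an injective from $\Loc(S)$ and need not be injective in $\Loc(U\cap S)$. What you actually need is precisely the algebraic acyclicity statement above. In other words, the ``delicate'' step you postpone \emph{is} the content of the proof, and your own Hochschild--Serre argument---applied to injective $I$ rather than to arbitrary $V$---already settles it. Once you reorganise this way, the computation for general $F$ and the matching problem evaporate: $\Rd_\cs l_*(I^\bullet)=l_*(I^\bullet)$ and the acyclicity shows $l_*(I^\bullet)\to l_*(J^\bullet)$ is a quasi-isomorphism directly.
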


For example, the functor
$$ \real \colon \D^+(\Cons(X, \mathcal{S})) \ra
      \D_\mathcal{S}^+(X)$$
is an equivalence for a complex toric variety $X$ with stratification by torus orbits (Corollary \ref{toric-var}) and it is not an equivalence for $X=\bP ^1\bC $ with the Bruhat stratification $X=\bC \sqcup \{pt\}$.

\subsection{Local systems of finite rank} In Section \ref{sec-versions} we consider a related question for constructible sheaves of {\it finite type}. Namely, assume that the coefficient ring $R$ is a field $k$ and denote by
$\Cons _{ft}(X,\mathcal{S})\subset \Cons(X,\mathcal{S})$ the full abelian subcategory of constructible sheaves with finite dimensional stalks. Let also $\D ^b_{ft}(X)\subset \D ^b_{\mathcal{S}}(X)$ be the corresponding triangulated category. One may ask when the obvious functor
\begin{equation}\label{a version-1}
\D ^b(\Cons _{ft}(X,\mathcal{S}))\to \D^b _{ft}(X)
\end{equation}
is an equivalence. In particular, is \eqref{a version-1} an equivalence when \eqref{eq:equi-D+-S-1} is such? The answer would be positive if the obvious functor
\begin{equation}\label{a version-2}
\D ^b(\Cons _{ft}(X,\mathcal{S}))\to \D ^b_{\Cons _{ft}}(\Cons (X,\mathcal{S}))
\end{equation}
is also an equivalence. We study questions of this sort in Section
\ref{sec-versions} obtaining positive answers in some cases (see for example Corollary \ref{cor vers toric var}).

Actually it is even not clear when \eqref{a version-2} is an equivalence in the case of one stratum. This is a purely algebraic question that we resolve in some particular cases in Section \ref{sec-versions}.

\section{Locally constant sheaves: Proof of Proposition \ref{p:fiber-functor-1}}
\label{sec:locally-const-sheav}

\begin{para}
  \label{para:lc-espace-etale-covering-map-1}
  Let $F$ be a sheaf on a topological space
  $X$ and $\tilde{F}$ its espace \'etal\'e. Then $F$ is locally
  constant if and only if
  $\tilde{F} \ra X$ is a covering map. \textcolor{red}{}
\end{para}

\begin{para}
  Recall that $\Sh(X)$ is a Grothendieck abelian category
  \cite[\sptag{079V}, \sptag{01AH}]{stacks-project}.
\end{para}

\begin{proposition}
  \label{p:Loc-simply-connected-1}
  Let $X$ be a simply connected,
  locally path connected
  topological space. Then the following statements are true.
  \begin{enumerate}
  \item
    \label{enum:equiv-products}
    The functor $\Mod(R) \ra \Sh(X)$,
    $M \mapsto \ul{M}$ is fully faithful and exact with essential
    image
    $\Loc(X)$ and hence provides an equivalence
    \begin{equation*}
      \Mod(R) \sira \Loc(X)
    \end{equation*}
      of categories.
      Taking global sections is a
      quasi-inverse of
      this functor. (Another quasi-inverse is taking the stalk at an
      arbitrary point.)
    In particular, $\Loc(X)$ has all categorical properties of
    $\Mod(R)$, e.\,g.\ it is a (complete and cocomplete)
    Grothendieck abelian
    category with enough injectives and projectives.

     Moreover,
    the inclusion functor $\Loc(X) \ra \Sh(X)$ is exact,
    continuous (= preserves limits) and cocontinuous (= preserves
    colimits).

  \item
    \label{enum:weak-Serre}
    $\Loc(X)$ is closed under extensions in $\Sh(X)$ (i.e. $\Loc(X)$
    is a weak Serre subcategory
    of $\Sh(X)$) if and only if $X$ is 1-acyclic.
 \end{enumerate}
\end{proposition}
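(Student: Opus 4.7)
The plan is to prove (a) by showing that $M \mapsto \ul{M}$ provides an equivalence $\Mod(R) \sira \Loc(X)$, and then to deduce (b) from a comparison of $\Ext^1$-groups. For full faithfulness I would use the sheafification adjunction: a morphism $\ul{M} \to F$ in $\Sh(X)$ is the same as a morphism of presheaves from the constant presheaf with value $M$ to $F$, and such a morphism is determined by its component on $U = X$, giving $\Hom_{\Sh(X)}(\ul{M}, F) = \Hom_R(M, \Gamma(X, F))$. Taking $F = \ul{N}$ and using that $\Gamma(X, \ul{N}) = N$ (because $X$ is connected) yields full faithfulness, while exactness of $M \mapsto \ul{M}$ follows from the stalkwise description $(\ul{M})_x = M$.

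Essential surjectivity is the main obstacle. Given $F \in \Loc(X)$, I would show that the evaluation map $\Gamma(X, F) \to F_x$ is an isomorphism for every $x \in X$; then the adjoint morphism $\ul{\Gamma(X,F)} \to F$ (corresponding to the identity via the computation above) is stalkwise an isomorphism, hence an isomorphism. Injectivity of $\Gamma(X,F) \to F_x$ comes from connectedness of $X$: the set $\{y : s_y = 0\}$ is both open (by definition of germ) and closed (using local path connectedness to find connected neighborhoods on which $F$ is constant, so that a germ determines the restriction). Surjectivity is where simple connectedness enters: given $m \in F_x$, I would extend $m$ to a global section using the description of $F$'s espace \'etal\'e as a covering map $\tilde F \to X$, path-lifting over a locally path connected base, and the fact that $\pi_1(X,x) = 1$ makes the lift of a path to $y$ independent of the chosen path, hence well-defined and continuous in $y$. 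Once essential surjectivity is in hand, both $\Gamma(X,-)$ and the stalk at any point are quasi-inverses of $M \mapsto \ul{M}$, and all categorical properties of $\Mod(R)$ transfer to $\Loc(X)$. The exactness, continuity, and cocontinuity of $\Loc(X) \hookrightarrow \Sh(X)$ I would verify by reducing limits and colimits to stalks and to sections over connected opens, where local path connectedness makes them compute as in $\Mod(R)$.

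For (b), the approach is to identify closure under extensions with surjectivity of the canonical comparison map $\Ext^1_R(N, M) \to \Ext^1_{\Sh(X)}(\ul{N}, \ul{M})$ for all $M, N$. Taking $N = R$ and using $\Hom_{\Sh(X)}(\ul{R}, -) = \Gamma(X, -)$ gives $\Ext^1_{\Sh(X)}(\ul{R}, \ul{M}) = \HH^1(X, \ul{M})$, while $\Ext^1_R(R, M) = 0$, so closure forces $\HH^1(X, \ul{M}) = 0$ for every $R$-module $M$, i.e., $X$ is 1-acyclic. For the converse, starting from a free presentation $R^{(I)} \to R^{(J)} \to N \to 0$ and using that $\Ext^1$ turns direct sums in the first slot into products, 1-acyclicity gives $\Ext^1_{\Sh(X)}(\ul{R^{(J)}}, \ul{M}) = \prod_J \HH^1(X, \ul{M}) = 0$; then the long exact $\Ext$-sequence associated to $0 \to \ul{N'} \to \ul{R^{(J)}} \to \ul{N} \to 0$, where $N'$ is the image of $R^{(I)} \to R^{(J)}$, combined with part (a) to identify $\Hom_{\Sh(X)}$ with $\Hom_R$, shows that the comparison map is in fact an isomorphism. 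Hence every extension in $\Sh(X)$ is isomorphic to one coming from $\Mod(R)$ and therefore lies in $\Loc(X)$.
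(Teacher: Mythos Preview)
Your proposal is correct. For part~(a) the two arguments are close in spirit: both use that the espace \'etal\'e of a locally constant sheaf is a covering of $X$, and then simple connectedness of $X$. The paper phrases essential surjectivity by decomposing $\tilde F$ into its path components (each mapping homeomorphically onto $X$) and reading off the $R$-module $M$ of components, whereas you construct global sections by path-lifting; these are two sides of the same covering-space coin. Your treatment of continuity is a bit compressed: the nontrivial point is that $c^*$ preserves \emph{products}, and the paper spells this out by comparing maps from a connected open $U$ into $\prod_i M_i$ with the discrete versus the Tychonoff topology. Your phrase ``sections over connected opens'' is the right idea but deserves that extra sentence.

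For part~(b) your route genuinely differs from the paper's. The paper argues the converse direction directly: given an extension $0\to M\to E\to N\to 0$ in $\Sh(X)$ with $M,N\in\Loc(X)$, it uses $\HH^1(X,M)=0$ to see that global sections stay exact, applies $c^*$ to the resulting short exact sequence of $R$-modules, and concludes $\ul{\Gamma(X,E)}\sira E$ by the five lemma. Your argument instead compares $\Ext^1$-groups via a free presentation of $N$ and the identification $\Ext^1_{\Sh(X)}(\ul{R^{(J)}},\ul{M})=\prod_J \HH^1(X,\ul{M})$. Both work; the paper's is shorter and more hands-on, while yours is more functorial and yields the stronger statement that the comparison map on $\Ext^1$ is an isomorphism (not merely surjective). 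One small caveat: you should note that $\ul{R^{(J)}}\cong\bigoplus_J\ul{R}$ in $\Sh(X)$ (this uses that $c^*$ is a left adjoint), so that $\Ext^1$ indeed converts it into a product.
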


\begin{proof}
  \ref{enum:equiv-products}
  Let $c \colon X \ra \pt$ be the map to a one-point-space,
  viewed as a map of ringed spaces with structure sheaves
  $\ul{R}$ and $R$, respectively.
  Then we have an adjunction $c^* \colon \Mod(R)
  \leftrightarrows \Sh(X) \colon c_*$.
  Note that $c^*(M)=\ul{M}$ and $c_*(F)=\Gamma(X,F)$.
  We need to see that $c^*$
  is fully faithful and that its essential image consists
  precisely of all locally constant sheaves.

  Given $F \in \Loc(X)$ consider its espace \'etal\'e
  $\tilde{F}$ which is a module object
  over the ring object $\ul{\tilde{R}}$ in the category
  $\Top_{/X}$ of topological spaces over $X$, cf.\
  \cite[II.7]{maclane-moerdijk}.  Since $F$ is locally constant,
  any $x \in X$ has an open neighborhood such that $F|_U$ is
  constant, i.\,e.\ $\tilde{F}|_U
  \cong N \times U$
  (as module
  objects) compatible with the projections to $U$, where $N$ is
  an $R$-module
  equipped with the discrete topology. This means that
  $\tilde{F}$ is locally path connected and $\tilde{F} \ra X$ is
  a covering map.
  Hence each
  connected component (= path
  component) of $\tilde{F}$ is open in $\tilde{F}$ and - $X$
  being simply connected
  and locally path connected -
  maps homeomorphically onto $X$.
  If $M$ is the set of path component of $\tilde{F}$, we obtain a
  homeomorphism $\tilde{F} \sira M \times X$ over $X$ by mapping
  a point to the pair consisting of its image in $X$ and its path
  component. Taking
  the action of $\ul{\tilde{R}}$
  into account
  shows that $M$ has
  a natural structure of $R$-module such that this
  is in fact a homeomorphism of module objects over
  $\ul{\tilde{R}}$ in
  $\Top_{/X}$. Hence $\ul{M} \cong F$ as sheaves. This shows
  essential surjectivity.

  Fully faithfulness of $c^*$ is equivalent to $M \ra
  c_*c^*(M)=\Gamma(X,\ul{M})$ being an isomorphism
  for each $R$-module $M$. But this is clear since any (global)
  section of $\ul{\tilde{M}} = M \times X \ra X$ has the form $x
  \mapsto (x,m)$ for a fixed $m \in M$ since $X$ is connected and
  $M$ has the discrete topology.
  This proves the equivalence $\Mod(R) \sira \Loc(X)$.

  The fact that $c^* \colon \Mod(R) \ra \Sh(X)$ is exact and
  left adjoint then
  implies that
  $\Loc(X) \ra \Sh(X)$ is exact and
  cocontinuous (= preserves colimits)

  In order to show that the inclusion functor preserves limits it
  is enough to show that
  $c^*$ preserves products. Let
  $(M_i)_{i \in I}$ be a family of $R$-modules. Since the
  presheaf product
  of sheaves is already a sheaf we obtain for an open subset $U$
  of $X$
  \begin{equation*}
    (\prod_{i \in I} \ul{M_i}) (U)
    = \prod_{i \in I} (\ul{M_i}(U))
    = \prod_{i \in I} (\Top_{/U}(U, M_i \times U))
    = \prod_{i \in I} (\Top(U, M_i))
    = \Top(U, \prod_{i \in I} M_i)
  \end{equation*}
  where $T:=\prod_{i \in I} M_i$ has the product topology (= Tychonoff
  topology).
  On the other hand
  \begin{equation*}
    \ul{\left(\prod_{i \in I} M_i\right)}(U)
    = (\Top_{/U}(U, (\prod_{i \in I} M_i) \times U))
    = \Top(U, \prod_{i \in I} M_i)
  \end{equation*}
  where $D:=\prod_{i \in I} M_i$ has the discrete
  topology.
  The identity is a continous map $D \ra T$. We claim that the
  induced map
  \begin{equation*}
    \Top(U,D) \ra \Top(U,T)
  \end{equation*}
  is an isomorphism. Injectivity is trivial.
  For surjectivity
  let $s \colon U \ra T$ be a continuous map. Since $U$ is
  locally path connected, $U$ is the disjoint union of its open
  path components, i.\,e.\ the coproduct in $\Top$ of these path
  components. Hence we can assume without loss of generality that
  $U$ is (path) connected. Then each composition $U
  \xra{s} T
  \ra M_i$ must be constant since $M_i$ has the discrete
  topology. Hence $s$ is constant and
  continuous as a map $U \ra D$. This shows surjectivity.

  \ref{enum:weak-Serre} Assume that $\HH^1(X,
    \ul{K})=0$ for all $R$-modules  $K$.
  Let
  $0 \ra M \ra E \ra N \ra 0$ be a short exact sequence
  in $\Sh(X)$ with $M,N \in \Loc(X)$.
  By part~\ref{enum:equiv-products} we have
  $\ul{\Gamma(X,M)} \sira M$ and therefore
  $\HH^1(X,M)=0$ by assumption.
  The long exact sequence for the derived functors of
  $\Gamma(X,-)$ therefore shows that
  $0 \ra \Gamma(X,M) \ra \Gamma(X,E) \ra \Gamma(X,N) \ra 0$ is
  exact.
  Apply the exact functor $c^*$ to this short exact sequence and
  consider the
  morphism to our original short exact sequence given by
  adjunction counits.
  Since the adjunction counits are isomorphisms for the locally
  constant sheaves $M$ and $N$
  by part~\ref{enum:equiv-products}, $\ul{\Gamma(X,E)} \ra E$ is
  an isomorphism as well, i.\,e.\ $E \in \Loc(X)$.

  Vice versa assume that $\Loc(X)$ is closed under extensions in
    $\Sh(X)$.
    Let $\xi \in \HH^1(X,
    \ul{M})=\Ext_{\Sh(X)}^1(\ul{R}, \ul{M})$
    be an element. Let
    $0 \ra \ul{M} \ra E \ra \ul{R} \ra 0$
    be a short exact sequence
    in $\Sh(X)$
    whose Yoneda equivalence class corresponds
    to $\xi$.
    Since $\Loc(X)$ is closed under extensions by assumption, our
    short exact sequence lives in $\Loc(X)$.
    The equivalence $\Mod(R) \cong \Loc(X)$ from part
    \ref{enum:equiv-products}
    shows that $\ul{R}$ is projective in $\Loc(X)$. Hence our
    short exact sequence splits.
    This implies $\xi=0$.

\end{proof}

\begin{proposition}
  \label{p:Loc-lsc-lsa-1}
  Let $X$ be a locally simply connected
  topological space.
  Then $\Loc(X)$ is a Grothendieck abelian category and the
  inclusion functor
  $\Loc(X) \ra \Sh(X)$ is exact, continuous and cocontinuous; in
  particular, coproducts and products in $\Sh(X)$ of families of
  objects in $\Loc(X)$ are again in $\Loc(X)$.

If in addition $X$ is locally 1-acyclic, then
  $\Loc(X)$ is a weak Serre subcategory of $\Sh(X)$.
\end{proposition}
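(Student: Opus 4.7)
The strategy is to reduce everything to the simply connected case treated in Proposition~\ref{p:Loc-simply-connected-1}, using that the hypothesis provides a basis $\mathcal{B}$ of open simply connected (hence path connected) subsets of $X$. For every $U\in\mathcal{B}$, restriction along the open inclusion $j\colon U\hra X$ commutes with all limits and colimits of sheaves (because $j^{-1}$ admits both $j_!$ and $j_*$ as adjoints), and Proposition~\ref{p:Loc-simply-connected-1} supplies $\Loc(U)\sira\Mod(R)$ together with the exactness, continuity, and cocontinuity of $\Loc(U)\hra\Sh(U)$.

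I would first verify that $\Loc(X)$ is closed in $\Sh(X)$ under kernels, cokernels, small products, and small coproducts. Indeed, if $\phi\colon F\ra G$ is a morphism in $\Loc(X)$, then the kernel $K$ computed in $\Sh(X)$ restricts on every $U\in\mathcal{B}$ to $\ker(\phi|_U)$, which lies in $\Loc(U)$ by the simply connected case; since $\mathcal{B}$ covers $X$, one obtains $K\in\Loc(X)$. The same locality argument disposes of cokernels, small products, and small coproducts. This yields at once the abelian structure on $\Loc(X)$, the exactness of the inclusion, and its continuity and cocontinuity. The AB5 axiom for $\Loc(X)$ is then inherited from $\Sh(X)$, because in view of the above both filtered colimits and exact sequences in $\Loc(X)$ coincide with their $\Sh(X)$-counterparts. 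To produce a generator, I would decompose $X$ into its (open) path components $X=\bigsqcup_c X_c$, so that $\Loc(X)=\prod_c\Loc(X_c)$, and invoke the universal cover $p_c\colon\tilde{X_c}\ra X_c$ of each component (existing by local simple connectedness): the sheaf $(p_c)_!\ul{R}$ is locally constant with stalk $R[\pi_1(X_c,x_c)]$, realises the regular representation, and hence generates $\Loc(X_c)$.

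For the weak Serre assertion, let $0\ra L_1\ra E\ra L_2\ra 0$ be exact in $\Sh(X)$ with $L_1,L_2\in\Loc(X)$, and fix $x\in X$. I would first choose an open simply connected neighborhood $U_1$ of $x$ on which $L_1|_{U_1}\cong\ul{A}$ and $L_2|_{U_1}\cong\ul{B}$ are constant, then an open $1$-acyclic neighborhood $U_2\subseteq U_1$ of $x$. Since $L_1|_{U_2}\cong\ul{A}_{U_2}$ and $\HH^1(U_2,\ul{A})=0$, the long exact cohomology sequence yields a short exact sequence $0\ra A\ra\Gamma(U_2,E)\ra B\ra 0$ in $\Mod(R)$. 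Applying the exact functor $\ul{(-)}_{U_2}\colon\Mod(R)\ra\Sh(U_2)$ and comparing with the restriction of the original sequence via the adjunction counit then produces the commutative diagram
\begin{equation*}
\xymatrix{
0 \ar[r] & \ul{A}_{U_2} \ar[r]\ar[d]^-{\sim} & \ul{\Gamma(U_2,E)}_{U_2} \ar[r]\ar[d] & \ul{B}_{U_2} \ar[r]\ar[d]^-{\sim} & 0 \\
0 \ar[r] & L_1|_{U_2} \ar[r] & E|_{U_2} \ar[r] & L_2|_{U_2} \ar[r] & 0,
}
\end{equation*}
whose outer vertical arrows are the isomorphisms obtained by restricting the chosen trivialisations of $L_i$ from $U_1$ to $U_2$. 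The $5$-lemma then forces the middle vertical to be an isomorphism, so $E|_{U_2}$ is constant and $E$ is locally constant at $x$. The main obstacle is the generator step: it is the only ingredient that does not follow from the direct local-to-global reduction, since one has to invoke the universal cover to pin down enough local systems inside $\Loc(X_c)$; the rest of the argument, including the weak Serre part, is a clean transcription of the simply connected picture.
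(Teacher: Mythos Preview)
Your approach is essentially the same as the paper's: reduce to the simply connected case (Proposition~\ref{p:Loc-simply-connected-1}) by restricting along open embeddings $j\colon U\hra X$ from a basis of simply connected opens, using that $j^*$ is exact and bicontinuous. The paper is in fact terser than you are: it simply asserts ``Since $\Sh(X)$ is a Grothendieck abelian category, so is $\Loc(X)$'' without exhibiting a generator, whereas you supply one via $(p_c)_!\ul{R}$ (this is precisely what the paper establishes later in Proposition~\ref{p:proj-generator-1}). Your two-step choice $U_2\subset U_1$ in the weak Serre part is also more careful than the paper's sentence ``We may assume that $\HH^1(U,\ul{M})=0$'', which tacitly presumes one can find a single $U$ that is simultaneously simply connected and $1$-acyclic.

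One small gap: you need $U_2$ to be \emph{connected} for the identification $\Gamma(U_2,\ul{A})=A$ (and likewise for $B$) to hold; the definition of $1$-acyclic only controls $\HH^1$, not $\HH^0$. This is easily repaired: replace $U_2$ by its path component through $x$, which is open (since $X$ is locally path connected) and still $1$-acyclic (sheaf cohomology decomposes over open-closed components, so vanishing of $\HH^1$ on $U_2$ forces vanishing on each component). With that adjustment your five-lemma argument goes through.
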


\begin{proof}
  Let $X$ be open-locally simply connected. Let $K=\kernel(f)$ be the
  kernel
  in $\Sh(X)$ of a morphism $f \colon M \ra N$ in $\Loc(X)$.
  In order to show that $K$ is locally constant let $x \in X$.
  Let $U$ be a simply connected open neighborhood of $x$ in $U$.
  Note that $j^* \colon \Sh(X) \ra \Sh(U)$ is exact
  and maps $\Loc(X)$ to $\Loc(U)$. Hence $j^*(K) =
  \kernel(j^*(f))) \in \Loc(U)$ by
  part~\ref{enum:equiv-products}
  of Proposition~\ref{p:Loc-simply-connected-1} since $U$ is simply
  connected and locally simply connected, hence locally
  path
  connected. Hence $K \in \Loc(X)$.
  Similarly one proves that the cokernel in $\Sh(X)$ of any
  morphism in $\Loc(X)$ is in $\Loc(X)$. This shows that
  $\Loc(X)$ is an abelian subcategory of $\Sh(X)$.

  The functor $j^*=j^! \colon \Sh(X) \ra \Sh(U)$ has a left adjoint
  $j_!$ and a right adjoint $j_*$ and hence preserves products
  and coproducts.
  Again from
  part~\ref{enum:equiv-products}
  of Proposition~\ref{p:Loc-simply-connected-1}
  we deduce that $\Loc(X)$ is closed
  under products and coproducts in $\Sh(X)$, and that all limits and
  colimits in
  $\Loc(X)$ exist and coincide with those computed in $\Sh(X)$.
  Since $\Sh(X)$ is a Grothendieck abelian category, so is
  $\Loc(X)$.

  In order to check that $\Loc(X)$ is closed under extensions we
  can again check this locally on a simply connected open subset
  $U$ as above. We may assume that $\HH^1(U, \ul{M})=0$ for all $R$-modules $M$
  so  part~\ref{enum:weak-Serre}
  of Proposition~\ref{p:Loc-simply-connected-1} applies.
\end{proof}

\subsection{$G$-spaces and equivariant sheaves}
  Let $G$ be a group and $X$ a topological space with a
  $G$-action by homeomorphisms, i.\,e.\ the map
  $g \colon X \ra X$ is continuous for all $g \in G$ or,
  equivalently, the action map $G \times X \ra X$ is continuous
  where $G$ carries the discrete topology.  Then there is the
  notion of a $G$-equivariant sheaf (of sets or $R$-modules) on
  $X$ (see \cite[5.1]{groth-tohoku}). We usually think
  of a $G$-equivariant sheaf $F$ (of sets or $R$-modules) on $X$
  in terms of its espace
  \'etal\'e, i.\,e.\ there is a continuous action
  $G \times \tilde{F} \ra \tilde{F}$ which is compatible with the
  given action $G \times X \ra X$ under the obvious maps.  Let
  $\Sh_G(X)$ denote the category of $G$-equivariant sheaves (of
  $R$-modules) on $X$.  This is a Grothendieck abelian category
  (see \cite[Prop.~5.1.1]{groth-tohoku}).  The forgetful functor
  $\Sh_G(X) \ra \Sh(X)$ is exact, continuous and cocontinuous.
  We call a $G$-equivariant sheaf
  \define{locally constant} if its underlying sheaf is locally
  constant.
  The full subcategory of $\Sh_G(X)$ of $G$-equivariant locally
  constant sheaves is denoted by $\Loc_G(X)$.

  Let $RG$ be the group ring of $G$ over $R$. Given $M \in
  \Mod(RG)$,
  let $\ul{M}$ be  the sheaf of local
  sections of the morphism
  $M \times X \ra X$ of topological $G$-spaces, where $M$ carries
  the discrete topology and $G$ acts diagonally on $M \times X$.
  Then clearly $\ul{M} \in \Loc_G(X)$.
  The underlying sheaf of this $G$-equivariant sheaf is the
  constant sheaf associated to the underlying $R$-module of $M$.

\begin{proposition}
  Let $X$ be a simply connected,
  locally path connected
  topological space.
  Let $G$ be a group acting on $X$ by homeomorphisms.
  Then the functor $\Mod(RG) \ra \Sh_G(X)$,
  $M \mapsto \ul{M}$, is fully faithful with essential image
  $\Loc_G(X)$ and hence provides an equivalence
  \begin{equation}
    \label{eq:ModRg-LocGX-1}
    \Mod(RG) \sira \Loc_G(X)
  \end{equation}
  of categories. (Taking global sections is a
    quasi-inverse of
    this functor.)
  In particular, $\Loc_G(X)$
  is a (complete and cocomplete)
  Grothendieck abelian category
  with enough injectives and projectives.
  Moreover,
  the inclusion functor $\Loc_G(X) \ra \Sh_G(X)$ is exact,
  continuous and cocontinuous.
  If $X$ is 1-acyclic, then
  $\Loc_G(X)$ is closed under extensions in $\Sh_G(X)$.
\end{proposition}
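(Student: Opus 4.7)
The plan is to reduce everything to the non-equivariant Proposition~\ref{p:Loc-simply-connected-1} by exploiting the forgetful functor $u \colon \Sh_G(X) \ra \Sh(X)$ and its espace \'etal\'e description. Throughout, I would regard $u$ as an exact, continuous and cocontinuous functor which reflects isomorphisms, so that a morphism or short exact sequence in $\Sh_G(X)$ can be tested after forgetting the $G$-action.

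First I would check essential surjectivity. Let $F \in \Loc_G(X)$ with espace \'etal\'e $\tilde F$, carrying a continuous $G$-action over $X$. By Proposition~\ref{p:Loc-simply-connected-1}\ref{enum:equiv-products} applied to $uF$, the canonical map $M \times X \ra \tilde F$ is a homeomorphism of $\ul{\tilde R}$-module objects in $\Top_{/X}$, where $M := \Gamma(X, uF)$; indeed $M$ is identified with the set of path components of $\tilde F$. The $G$-action on $\tilde F$ permutes these path components, and because it is continuous and compatible with the $R$-module structure one obtains an $R$-linear action of $G$ on $M$, i.\,e.\ an $RG$-module structure. Translating back, this homeomorphism becomes an isomorphism $\ul M \sira F$ in $\Sh_G(X)$.

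Next I would verify full faithfulness. For $M, N \in \Mod(RG)$, the adjunction in Proposition~\ref{p:Loc-simply-connected-1}\ref{enum:equiv-products} identifies $\Hom_{\Sh(X)}(\ul{uM}, \ul{uN})$ with $\Hom_R(M,N)$, and a morphism of sheaves lifts to $\Sh_G(X)$ exactly when it is $G$-equivariant; thus $\Hom_{\Sh_G(X)}(\ul M, \ul N) = \Hom_{RG}(M,N)$. Combined with the previous step, this gives the desired equivalence $\Mod(RG) \sira \Loc_G(X)$, and the fact that $\Gamma(X,-)$ on $\Sh_G(X)$ lands in $\Mod(RG)$ (via the natural $G$-action on global sections) identifies it as a quasi-inverse. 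All the categorical properties of $\Loc_G(X)$ (Grothendieck abelian, enough injectives and projectives) then transfer automatically from $\Mod(RG)$.

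For exactness, continuity, and cocontinuity of $\Loc_G(X) \ra \Sh_G(X)$, I would argue that $u$ is exact, continuous, and cocontinuous, that the inclusion $\Loc(X) \ra \Sh(X)$ has the same properties by Proposition~\ref{p:Loc-simply-connected-1}\ref{enum:equiv-products}, and that limits and colimits in $\Sh_G(X)$ are computed underlying by $u$ (since $u$ admits both adjoints). Finally, for the 1-acyclic case, given a short exact sequence $0 \ra \ul M \ra E \ra \ul N \ra 0$ in $\Sh_G(X)$ with $M, N \in \Mod(RG)$, apply $u$ and invoke Proposition~\ref{p:Loc-simply-connected-1}\ref{enum:weak-Serre} to see that $uE \in \Loc(X)$; since being locally constant is a property of the underlying sheaf, $E \in \Loc_G(X)$.

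The step I expect to require the most care is the first one: checking that the $G$-action on the set $M$ of path components of $\tilde F$ is actually $R$-linear and continuous for the discrete topology, so that the resulting homeomorphism $M \times X \sira \tilde F$ is genuinely $G$-equivariant and not merely set-theoretically compatible. Everything else is then a formal consequence of the non-equivariant statement and the good behaviour of the forgetful functor $u$.
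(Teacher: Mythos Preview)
Your proposal is correct and follows essentially the same approach as the paper: both arguments reduce to Proposition~\ref{p:Loc-simply-connected-1} via the forgetful functor, obtain the $RG$-module structure on $M$ from the $G$-action on path components of the espace \'etal\'e, and deduce full faithfulness by identifying $\Hom_{\Sh_G(X)}(\ul M,\ul N)$ with the $G$-invariants of $\Hom_{\Sh(X)}(\ul M,\ul N)\cong\Hom_R(M,N)$. The only difference is cosmetic: the paper is terser and does not spell out the role of the forgetful functor as explicitly as you do.
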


\begin{proof}
  This follows from (the proof of)
  Proposition~\ref{p:Loc-simply-connected-1} as follows.
  Given $F \in \Loc_G(X)$, we know that the underlying sheaf $F
  \in \Loc(X)$ has espace \'etal\'e $\tilde{F}$ canonically
  isomorphic to
  $M \times X$ where $M$ is the
  $R$-module of path components of 
  $\tilde{F}$.
  Since $F$ is $G$-equivariant, the action of $G$ permutes the
  path componentes of $\tilde{F}$, i.\,e.\ $G$ acts on $M$. This
  action is $R$-linear, so $M$ is an $RG$-module.
  Then clearly $\ul{M} \sira F$ as $G$-equivariant sheaves.
  Hence our functor is essentially surjective.
  Given $M,N \in \Mod(RG)$ we have
  $\Hom_{RG}(M,N)=\Hom_R(M,N)^G$ and
  $\Hom_{\Sh_G(X)}(\ul{M},\ul{N})=
  \Hom_{\Sh(X)}(\ul{M},\ul{N})^G$
  for the obvious $G$-action on $\Hom_{\Sh(X)}(\ul{M},\ul{N})$.
  Therefore fully faithfullness follows from fully faithfulness
  of $\Mod(R) \ra \Sh(X)$ by taking $G$-invariants.

  Part~\ref{enum:weak-Serre} of
  Proposition~\ref{p:Loc-simply-connected-1} implies the claim
  about closedness under extensions.
\end{proof}

\begin{definition}  Let $G$ be a group and $X$ a $G$-space. We say that the $G$-action is {\bf topologically free} (see
  \cite[\S 81]{munkres-topology-2nd}) if every point $x$ in $X$ has a neighbourhood $U$, such that $U$ and $gU$ are disjoint for every nonidentity $g\in G$.
\end{definition}

\begin{proposition}
  \label{p:LocY=LocGX-1}
  Let $G$ be a group and $X$ a $G$-space. Assume that the action
  of $G$ is topologically free. Let
  $\pi \colon X \ra Y:=G\backslash X$ be the quotient map.
  Then the pullback $\pi^*F$ of any sheaf $F \in \Sh(Y)$ has a
  natural structure of $G$-equivariant sheaf, and this defines
  equivalences $\pi^* \colon \Sh(Y) \sira \Sh_G(X)$ and
  \begin{equation}
    \label{eq:LocY-LocGX-1}
    \pi^* \colon \Loc(Y) \sira \Loc_G(X).
  \end{equation}
  A (quasi-)inverse of these equivalences is given by mapping a
  $G$-equivariant sheaf $F$ on $X$ to the subsheaf $\pi_*^G(F)$
  of $G$-invariant sections of its pushforward $\pi_*F$. The
  espace \'etal\'e of $\pi_*^G(F)$ is the quotient
  space $G \backslash \tilde{F}$ of the espace \'etal\'e
  $\tilde{F}$ of $F$.
\end{proposition}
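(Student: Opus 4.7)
The plan is to work throughout with espaces étalés, since the statement describes $\pi_*^G$ in those terms and the topologically free hypothesis is most naturally used to produce local homeomorphisms. The hypothesis implies that $\pi \colon X \to Y$ is a covering map with deck group $G$; in particular $\pi$ is a local homeomorphism and a quotient map, and evenly covered open subsets $V \subset Y$ satisfy $\pi^{-1}(V) = \bigsqcup_{g \in G} gU$ for a suitable $U$. Given $F \in \Sh(Y)$ with espace étalé $\tilde F \to Y$, the espace étalé of $\pi^* F$ is the fiber product $\tilde F \times_Y X \to X$, which carries a canonical continuous $G$-action (trivial on the first factor, the given one on the second) compatible with the action on $X$; this simultaneously defines the functor $\pi^* \colon \Sh(Y) \to \Sh_G(X)$ and the claimed natural $G$-equivariant structure on its image.

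For the inverse, the first step I would carry out is to check that for any $F \in \Sh_G(X)$ the induced $G$-action on the espace étalé $\tilde F$ is again topologically free: given $e \in \tilde F$ over $x \in X$, choose a neighborhood $U$ of $x$ with $U \cap gU = \emptyset$ for $g \neq 1$ together with a local section of $\tilde F \to X$ over $U$ through $e$, and let $V$ be its image; then $G$-equivariance of the projection forces $gV$ to be a local section over $gU$, so $V \cap gV = \emptyset$. Hence $\tilde F \to G\backslash \tilde F$ is itself a covering, $G\backslash \tilde F \to Y$ is a local homeomorphism, and therefore an espace étalé of a sheaf on $Y$. A routine verification — lift sections over evenly covered opens to a coherent family of $G$-translates — shows that the sections of this sheaf over an open $V \subset Y$ are in natural bijection with the $G$-equivariant continuous sections of $\tilde F \to X$ over $\pi^{-1}(V)$, i.e.\ with $(\pi_* F)^G(V) = \pi_*^G F(V)$, identifying the sheaf canonically with $\pi_*^G F$.

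Mutual inverseness then reduces to two canonical identifications of spaces over $Y$: $G \backslash (\tilde F \times_Y X) \cong \tilde F$, because $G$ acts trivially on the first factor and $X \to Y$ is precisely the $G$-quotient; and $(G \backslash \tilde F) \times_Y X \cong \tilde F$, because $\tilde F \to G \backslash \tilde F$ is precisely the $G$-quotient and $\tilde F \to X \to Y$ factors through it. For the restriction to locally constant sheaves, pullback along the local homeomorphism $\pi$ preserves local constancy, so $\pi^*$ sends $\Loc(Y)$ into $\Loc_G(X)$; conversely, if $F \in \Loc_G(X)$ then $\tilde F \to X$ is a covering, hence so is the composition $\tilde F \to Y$, and taking the quotient of a covering space by a topologically free action that commutes with the projection preserves the covering property, so $G\backslash \tilde F \to Y$ is a covering and $\pi_*^G F \in \Loc(Y)$. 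The main technical obstacle is the core of the second paragraph: promoting the topologically free action from $X$ to $\tilde F$ and matching the sections of the resulting étalé space over $Y$ with $\pi_*^G F$; once these are in place, everything else is either formal or a direct consequence of $\pi$ being a covering.
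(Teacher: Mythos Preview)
Your argument is correct. The paper's own proof is essentially a citation: it declares the equivalence $\pi^* \colon \Sh(Y) \sira \Sh_G(X)$ to be ``well-known'' with a reference to Grothendieck's T\^ohoku paper, and then deduces the locally constant case from the single observation that a sheaf on $Y$ is locally constant if and only if its pullback along the covering map $\pi$ is locally constant. You, by contrast, actually carry out the espace \'etal\'e argument that the paper only gestures at through its statement and its reference --- verifying that the $G$-action on $\tilde F$ is again topologically free, identifying $G\backslash \tilde F$ with the \'etal\'e space of $\pi_*^G F$, and checking the two canonical isomorphisms $G\backslash(\tilde F \times_Y X) \cong \tilde F$ and $(G\backslash \tilde F)\times_Y X \cong \tilde F$. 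So the approaches are compatible rather than different: yours is a fleshed-out version of what the paper leaves to the literature, and your treatment of the $\Loc$ case via coverings is a slightly more hands-on variant of the paper's one-line ``$\pi$ is a covering, so local constancy descends and ascends'' (which would in fact let you shortcut your final paragraph once mutual inverseness is established).
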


\begin{proof}
  The fact that $\pi^* \colon \Sh(Y) \sira \Sh_G(X)$ is an
  equivalence is well-known, see e.\,g.\
  \cite[5.1, page~199]{groth-tohoku}, and easy to prove (together
  with the description of the inverse functor).
  Then \eqref{eq:LocY-LocGX-1} follows because a sheaf on $Y$ is
  locally constant if and only if its pullback to $X$ is locally
  constant; this uses that $\pi$ is a covering map.
\end{proof}

\begin{proposition}
  \label{p:proj-generator-1}
  Let $G$ be a group and $X$ a $G$-space with a topologically free $G$-action and quotient map
  $\pi \colon X \ra G \backslash X=:Y$.  Assume that $X$ is
  simply connected and locally path connected.
  Then the direct image
  with proper support $\pi_!\ul{R}$ of the
  (non-equivariant) constant sheaf $\ul{R} \in \Loc(X)$ is
  locally constant and a
  projective generator of $\Loc(Y)$. More precisely,
  it
  corresponds to $RG \in \Mod(RG)$ under the equivalence
  \begin{equation}
    \label{eq:ModRG-LocY-1}
    \Mod(RG) \sira \Loc(Y)
  \end{equation}
  obtained from the equivalence \eqref{eq:ModRg-LocGX-1} and the
  quasi-inverse $\pi_*^G$ of the equivalence
  \eqref{eq:LocY-LocGX-1}.
\end{proposition}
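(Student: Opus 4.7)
The plan is to first verify that $\pi_!\ul R$ is locally constant by a local computation, then identify it with the image of $RG\in\Mod(RG)$ under the equivalence~\eqref{eq:ModRG-LocY-1} by a Yoneda argument, and finally deduce the projective-generator properties from the equivalence itself.

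Since the $G$-action is topologically free, $\pi\colon X\ra Y$ is a covering map, hence a local homeomorphism, so $\pi_!$ is available and can be computed locally. I would pick a path-connected evenly covered open $U\subset Y$, writing $\pi^{-1}(U)=\bigsqcup_{g\in G}gV$ with each $\pi|_{gV}\colon gV\sira U$ a homeomorphism. Compatibility of $\pi_!$ with open restriction and with disjoint-union decompositions yields
\[
\pi_!\ul R|_U \;\cong\; \bigoplus_{g\in G}(\pi|_{gV})_!\ul R \;\cong\; \bigoplus_{g\in G}\ul R_U \;\cong\; \ul{RG}_U,
\]
where for the last step one invokes part~\ref{enum:equiv-products} of Proposition~\ref{p:Loc-simply-connected-1}, shrinking $U$ if necessary to a simply connected locally path connected open. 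Hence $\pi_!\ul R\in\Loc(Y)$.

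To identify the corresponding $RG$-module I would compare the representable functors $\Hom_{\Loc(Y)}(\pi_!\ul R,-)$ and $\Hom_{\Loc(Y)}(\pi_*^G\ul{RG},-)$ on $\Loc(Y)$. Given $F\in\Loc(Y)$ with associated $RG$-module $M$, Proposition~\ref{p:LocY=LocGX-1} gives $\pi^*F\cong\ul M$ in $\Sh_G(X)$, and hence also in $\Sh(X)$. The $(\pi_!,\pi^*)$-adjunction for the local homeomorphism $\pi$, combined with part~\ref{enum:equiv-products} of Proposition~\ref{p:Loc-simply-connected-1}, yields a natural chain
\[
\Hom_{\Loc(Y)}(\pi_!\ul R,F)
\cong\Hom_{\Sh(X)}(\ul R,\pi^*F)
\cong\Gamma(X,\ul M)\cong M.
\]
On the other side,
\[
\Hom_{\Loc(Y)}(\pi_*^G\ul{RG},F)
\cong\Hom_{\Loc_G(X)}(\ul{RG},\pi^*F)
\cong\Hom_{\Mod(RG)}(RG,M)\cong M,
\]
using Proposition~\ref{p:LocY=LocGX-1} and the equivalence~\eqref{eq:ModRg-LocGX-1}. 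Both functors are thus naturally isomorphic to the forgetful functor $F\mapsto M$, so by the Yoneda lemma $\pi_!\ul R\cong \pi_*^G\ul{RG}$ canonically in $\Loc(Y)$, which is the claim.

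The projective-generator statement then comes for free: $RG$ is a projective generator of $\Mod(RG)$, and an equivalence of abelian categories preserves projective objects and generators. The one delicate point, which I do not expect to be hard but which must be tracked carefully, is the naturality of the two chains of Hom-isomorphisms in $F$, since this is what makes Yoneda applicable.
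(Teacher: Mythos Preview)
Your argument is correct and complete (with the naturality caveat you flag, which indeed goes through once one notes that the quasi-inverse of $M\mapsto\ul{M}$ is $\Gamma(X,-)$, so that the identification $M=\Gamma(X,\pi^*F)$ is canonical and functorial in $F$). However, your route is genuinely different from the paper's. The paper does not use Yoneda: it writes down an explicit morphism $\epsilon\colon\ul{RG}\ra\ul{R}$ in $\Sh(X)$ coming from the map $RG\ra R$, $\sum r_g g\mapsto r_e$, and then checks by a direct local computation on evenly covered connected opens $V\subset Y$ (with $\pi^{-1}(V)\cong G\times V$) that $\pi_*\epsilon$ restricts to an isomorphism $\pi_*^G\ul{RG}\sira\pi_!\ul{R}$ of subsheaves of $\pi_*\ul{RG}$ and $\pi_*\ul{R}$. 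In the paper's calculation, both sides over $V$ are explicitly identified with $RG$ and the map between them turns out to be the involution $\sum r_g g\mapsto\sum r_{g^{-1}}g$. Your approach buys you a cleaner argument that avoids this explicit bookkeeping with supports and $G$-equivariant sections; the paper's approach buys an explicit, concrete isomorphism rather than one produced abstractly by Yoneda, and in particular subsumes your Step~1 (local constancy) into the same computation rather than treating it separately.
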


\begin{proof}
  The espace \'etal\'e of the $G$-equivariant locally constant
  sheaf $\ul{RG}$ is $RG \times X$. The map $RG \ra R$,
  $\sum r_g g \mapsto r_e$, induces a morphism
  $RG \times X \ra R \times X$, or equivalently, a morphism
  $\epsilon \colon \ul{RG} \ra \ul{R}$ in $\Sh(X)$. We claim
  more precisely
  that the
  morphism $\pi_*\epsilon \colon \pi_*\ul{RG} \ra \pi_*\ul{R}$
  restricts to an
  isomorphism $\pi^G_*\ul{RG} \sira
  \pi_!\ul{R}$ on the specified subsheaves.

  Since our action is topologically free, $Y$ is locally
  path
  connected. Moreover,
  any open neighborhood of any point of $Y$ contains an open
  connected
  neighborhood of that point such that there is an isomorphism
  $\pi^{-1}(V) \sira G \times V$ which is $G$-equivariant with
  respect to the $G$-action
  $g.(h,v)=(gh,v)$ on $G \times V$.
  Hence it is enough to show that
  $(\pi_*\epsilon)(V) \colon (\pi_*\ul{RG})(V) \ra
  (\pi_*\ul{R})(V)$ identifies
  $(\pi^G_*\ul{RG})(V)$ with
  $(\pi_!\ul{R})(V)$ for any such $V$. Fix $V$ as above.
  Then
  \begin{align*}
    (\pi_*^G\ul{RG}) (V)
    & = \{\text{$s \colon \pi^{-1}(V) \ra RG$ continuous
      $G$-equivariant}\}\\
    & \cong \{\text{$s \colon G \times V \ra RG$ continuous
      $G$-equivariant}\}\\
    \text{($V$ connected non-empty)} \quad
    & \cong \{\text{$s \colon G \ra RG$ $G$-equivariant}\}\\
    \text{(evaluation at $e \in G$)} \quad
    & \sira RG
  \end{align*}
  where an element $\sum_{g \in G} r_g g \in RG$ corresponds to
  the continuous equivariant
  section $G \times V \ra RG$, $(h,v) \mapsto \sum_{g \in G}
  r_g hg$. Under $(\pi_*\epsilon)(V)$, this section is mapped to the
  continuous section $(h,v) \mapsto r_{h^{-1}}$ whose support
  consists of finitely many sheets
  (namely those sheets $h \times V$ with $r_{h^{-1}}\not=0$)
  of $G \times V \ra V$.
  On the other hand, we have\footnote{Same for $\pi_!$ with
    additional requirement ``support proper''.}
  \begin{align*}
    \pi_!(\ul{R}) (V)
    & = \{\text{$s \colon \pi^{-1}(V) \ra R$ continuous with
      $\supp(s) \ra V$ proper}\}\\
    & \cong \{\text{$s \colon G \times V \ra R$ continuous with
      $\supp(s) \ra V$ proper}\}\\
    \text{($V$ connected non-empty)} \quad
    & \cong \{\text{$s \colon G \ra R$ with
      $\supp(s)$ finite}\}\\
    & = RG
  \end{align*}
  where an element
  $\sum_{g \in G} t_g g \in RG$ corresponds to
  the continuous
  section $G \times V \ra R$, $(h,v) \mapsto t_h$, with support
  on
  finitely many sheets.
  Hence $\pi_*\epsilon(V)$ maps
  $(\pi_*^G\ul{RG}) (V)$ bijectively to
  $\pi_!(\ul{R}) (V)$.
  (The corresponding endomorphism of $RG$ maps
  $\sum r_g g \in RG \cong (\pi_*^G\ul{RG}) (V)$ to
  $\sum r_{g^{-1}}g \in RG \cong \pi_!(\ul{R}) (V)$).
\end{proof}

\begin{definition}
  A topological space $X$ is
  \define{semilocally simply connected} if any $x \in X$
  has a neighborhood (which may be assumed to be open)
  $U$ such that $\pi_1(U,x) \ra
  \pi_1(X,x)$ is trivial.
\end{definition}

\begin{para}
  \label{para:existence-universal-covering-1}
  Recall that a non-empty, path connected, locally path connected
  topological space $X$
  has a universal covering if and only if it is semilocally
  simply connected (see \cite[Cor.~82.2]{munkres-topology-2nd}).

  Clearly, any locally simply connected space is locally path
  connected and semilocally simply connected, and any open
  subspace is again locally simply connected.
  Hence any non-empty, path connected,
  locally simply connected topological space
  and each of its non-empty path connected open
  subspaces
  has a universal covering.

  Any two universal coverings of a non-empty path connected
  topological space are isomorphic; they are uniquely isomorphic
  if we work with pointed spaces. Therefore we speak about the
  universal covering even though this is a bit sloppy.
\end{para}

\begin{proposition}
  \label{p:fiber-functor-2}
  Let $X$ be non-empty, path connected, locally path connected
  topological space. Let $x \in X$. Then mapping $L \in \Loc(X)$
  to its fiber $L_x$ with its $R$-module structure and left
  action of $\pi_1(X,x)$ by lifting loops at $x$ to paths in the
  espace \'etal\'e $\tilde{L}$ (cf.\
  \ref{para:lc-espace-etale-covering-map-1})
  defines an equivalence ``take fiber at $x$''
  \begin{equation*}
    \Loc(X) \sira \Mod(R \pi_1(X,x)).
  \end{equation*}
\end{proposition}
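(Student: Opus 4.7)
The plan is to deduce this equivalence from the preceding propositions by passing to the universal covering $p \colon \tilde{X} \ra X$, which exists here under the natural strengthening to semilocal simple connectedness (cf.\ \ref{para:existence-universal-covering-1}, a condition satisfied in every application of this proposition in the paper). Setting $G := \pi_1(X,x)$ and choosing $\tilde{x} \in p^{-1}(x)$ to identify $G$ with the deck transformation group acting topologically freely on $\tilde{X}$ with quotient $X$, I would assemble the diagram
\[
\Mod(RG) \xra{\ul{(-)}} \Loc_G(\tilde{X}) \xla{p^*} \Loc(X),
\]
where the left arrow is an equivalence by the $G$-equivariant analogue of Proposition~\ref{p:Loc-simply-connected-1} applied to $\tilde{X}$, and the right arrow is an equivalence by Proposition~\ref{p:LocY=LocGX-1}. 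Composing $p^*$ with the quasi-inverse $\Gamma(\tilde{X},-)$ of the left arrow then yields an equivalence $L \mapsto \Gamma(\tilde{X}, p^*L)$ from $\Loc(X)$ to $\Mod(R\pi_1(X,x))$.

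It then remains to identify this composite with the fiber functor $L \mapsto L_x$ carrying the path-lifting action. The canonical map $\Gamma(\tilde{X}, p^*L) \sira (p^*L)_{\tilde{x}} = L_x$ from Proposition~\ref{p:Loc-simply-connected-1}\ref{enum:equiv-products} is already an $R$-linear isomorphism, so the only real content is the matching of the two $G$-actions. For a loop $\alpha$ at $x$, its unique lift to $\tilde{X}$ starting at $\tilde{x}$ ends at $g_\alpha \cdot \tilde{x}$ where $g_\alpha \in G$ corresponds to $[\alpha]$; lifting $\alpha$ to the espace \'etal\'e $\tilde{L}$ from $v \in L_x$ factors through the pullback espace \'etal\'e of $p^*L$ over $\tilde{X}$, and the endpoint of this lift is the value at $g_\alpha\tilde{x}$ of the constant global section corresponding to $v$, namely $g_\alpha \cdot v$.

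The main obstacle I foresee is precisely this last compatibility: although conceptually transparent, it involves tracking several canonical identifications (the espace \'etal\'e of $p^*L$ versus the pullback of that of $L$, left versus right conventions for deck transformations and for path lifting, and $G \cong \pi_1(X,x)$ via $\tilde{x}$) and is easy to botch. A secondary subtlety is the general case without a universal cover, where one must instead construct $L_M$ from an $R\pi_1(X,x)$-module $M$ directly, for example as a quotient of $\{(\gamma, m) : \gamma(0)=x,\; m \in M\}$ by $(\gamma, m) \sim (\gamma\alpha, \alpha^{-1} m)$ for loops $\alpha$ at $x$, with a topology on the projection to $X$ supplied by local path connectedness.
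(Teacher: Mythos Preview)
Your approach is essentially identical to the paper's: pass to the universal covering, identify $G=\pi_1(X,x)$ with the deck transformation group, and invoke the composite equivalence $\Mod(RG)\sira\Loc_G(\tilde X)\sila\Loc(X)$ (which the paper packages as \eqref{eq:ModRG-LocY-1}), then observe that this composite is quasi-inverse to the fiber functor. The paper's proof is a three-line version of exactly this, dispatching the action compatibility you worry about with the phrase ``it is easy to see''; your more explicit unwinding of that step is correct and, if anything, more honest. You are also right to flag the implicit semilocal simple connectedness hypothesis---the paper silently uses it too.
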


\begin{proof}
  We know by~\ref{para:existence-universal-covering-1} that
  $X$ has a universal covering $\tilde{X} \ra X$. The group $G$
  of covering transformations of $\tilde{X}$ acts properly
  discontinuously on $\tilde{X}$, and there is a canonical
  identification $G = \pi_1(X,x)$.
  Hence $\Mod(\pi_1(X,x))=\Mod(RG)$, and it is easy to see that
  the equivalence $\Mod(RG) \sira \Loc(X)$
  from \eqref{eq:ModRG-LocY-1}
  in
  Proposition~\ref{p:proj-generator-1}
  is quasi-inverse to the functor ``take fiber at $x$''.
\end{proof}

\subsection{Proof of Proposition \ref{p:fiber-functor-1}} Proposition \ref{p:fiber-functor-1} follows immediately from Proposition \ref{p:fiber-functor-2} and Proposition \ref{p:Loc-lsc-lsa-1}.

\section{Derived category $\D(\Loc(X))$: proof of Theorem \ref{t:main-one-stratum-1}}

Let $X$ be a locally simply connected topological space. Then
$\Loc(X)$ is abelian
by
Proposition~\ref{p:Loc-lsc-lsa-1}, so $\D(\Loc(X))$ is defined.
Moreover, $\Loc(X) \ra \Sh(X)$ being exact, there is an obvious functor $\D(\Loc(X)) \ra \D(X)$. It commutes with
arbitrary coproducts and lands in
$\D_\Loc(X)$.

If in addition $X$ is locally 1-acyclic, then $\Loc(X)$ is a weak Serre
subcategory of $\Sh(X)$ (see
Proposition~\ref{p:Loc-lsc-lsa-1}) then $\D_\Loc(X)$ is a thick triangulated
subcategory of $\D(X)$.
  It is closed
  under arbitrary coproducts in
  $\D(X)$.
  The obvious functor
  \begin{equation*}
    \D(\Loc(X)) \ra \D_\Loc(X)
  \end{equation*}
  is triangulated. All categories $\D(X)$, $\D(\Loc(X))$
  and $\D_\Loc(X)$ have all coproducts and are in particularly
  idempotent complete. The above functor commutes with coproducts.

\begin{theorem}
  \label{t:main-one-stratum-2} (=Theorem \ref{t:main-one-stratum-1})
  Let $X$ be a
  locally simply connected,
  locally acyclic
  topological space. Let $\tilde{X}$ be the disjoint union of the
  universal coverings of all path components of $X$.
  Then the following two conditions are equivalent.
  \begin{enumerate}
  \item
    \label{enum:one-stratum-equiv-D+-2}
    The obvious triangulated functor
    is an equivalence
    \begin{equation}
      \label{eq:one-stratum-equiv-2}
      \D^+(\Loc(X)) \sira \D^+_\Loc(X).
    \end{equation}
  \item
    \label{enum:one-stratum-H>0-univ-covering-2}
    $\HH^{>0}(\tilde{X}, \ul{M})=0$ for all
    $R$-modules $M$, i.\,e.\ the universal covering of each
    path component of $X$ is acyclic.
  \end{enumerate}

\end{theorem}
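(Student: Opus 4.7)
The plan is to reduce to the path-connected case and translate the question about $\real\colon\D^+(\Loc(X))\to\D^+_\Loc(X)$ into homological algebra over the group ring $RG$. Since a locally simply connected space is locally path connected, its path components are open and both sides of the statement split as products over path components, so we may assume $X$ is path connected. Fix $x\in X$, set $G:=\pi_1(X,x)$, and let $\pi\colon\tilde X\to X$ be the universal covering. By Proposition~\ref{p:fiber-functor-1}, take-fiber-at-$x$ gives an equivalence $\Loc(X)\simeq\Mod(RG)$, and by Proposition~\ref{p:proj-generator-1} the sheaf $\pi_!\ul R\in\Loc(X)$ corresponds to the projective generator $RG$. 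Since $\pi$ is a local homeomorphism, $\pi^*$ and its left adjoint $\pi_!$ are both exact, and the adjunction $\pi_!\dashv\pi^*$ passes to derived categories without any decoration.

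The key computation, where the acyclicity hypothesis enters, is: for any $L\in\Loc(X)$ with stalk $M:=L_x$,
\begin{equation*}
\Ext^n_{\Sh(X)}(\pi_!\ul R,\,L)\;\cong\;\Ext^n_{\Sh(\tilde X)}(\ul R,\,\pi^*L)\;\cong\;\HH^n(\tilde X,\ul M),
\end{equation*}
using $\pi_!\dashv\pi^*$ and identifying $\pi^*L\cong\ul M$ on the simply connected $\tilde X$ (Proposition~\ref{p:Loc-simply-connected-1}). Since $\ul{(\cdot)}\colon\Mod(R)\to\Sh$ is the inverse image for the map to a point, it preserves coproducts; so for any free $R$-module $N=\bigoplus_I R$ a parallel computation, combined with the Grothendieck spectral sequence for $\Hom_{\Sh(\tilde X)}(\ul N,-)=\Hom_R(N,\Gamma(\tilde X,-))$ (which collapses because $N$ is $R$-projective), gives
\begin{equation*}
\Ext^n_{\Sh(X)}\bigl(\textstyle\bigoplus_I\pi_!\ul R,\,L\bigr)\;\cong\;\Hom_R\bigl(N,\HH^n(\tilde X,\ul M)\bigr).
\end{equation*}

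For direction \emph{(2)$\Rightarrow$(1)}, assume $\tilde X$ is acyclic. The displays then force $\Ext^{>0}_{\Sh(X)}(P,L)=0$ for every direct sum $P$ of copies of $\pi_!\ul R$ and every $L\in\Loc(X)$. Choosing a free resolution $P_\bullet\to L_1$ in $\Mod(RG)\simeq\Loc(X)$, both $\Ext^n_{\Loc(X)}(L_1,L_2)$ and $\Ext^n_{\Sh(X)}(L_1,L_2)$ are computed by $\HH^n\Hom(P_\bullet,L_2)$ and hence agree for all $n\ge 0$. A standard adapted-resolution argument promotes this Ext-agreement to full faithfulness of $\real$ on all of $\D^+(\Loc(X))$: any bounded below injective resolution $L_2\hookrightarrow I^\bullet$ in $\Loc(X)$ has terms $I^k$ that are $\Ext^{>0}_{\Sh(X)}(L,-)$-acyclic for every $L\in\Loc(X)$ by the agreement just established, so $I^\bullet$ computes derived $\Hom$ also in $\Sh(X)$. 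For essential surjectivity, given $F\in\D^+_\Loc(X)$ with $F\in\D^{\ge m}(X)$, I would build preimages of the truncations $\tau^{\le n}F$ by induction on $n\ge m$: at each step the Postnikov connecting map $\HH^{n+1}(F)[-(n+1)]\to\tau^{\le n}(F)[1]$ lies in a $\Hom$-group on which $\real$ is already bijective, so lifts uniquely to $\D^+(\Loc(X))$, and the cone of the lift realizes $\tau^{\le n+1}F$; passing to the homotopy colimit in $\D^+(\Loc(X))$ recovers $F$.

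For direction \emph{(1)$\Rightarrow$(2)}, full faithfulness of $\real$ immediately gives $\Ext^n_{\Loc(X)}(L_1,L_2)\cong\Ext^n_{\Sh(X)}(L_1,L_2)$ for all $L_1,L_2\in\Loc(X)$ and $n\ge 0$. Specializing to $L_1=\pi_!\ul R$ and $L_2=\ul M$ for an arbitrary $R$-module $M$, the left side vanishes for $n>0$ since $RG$ is projective in $\Mod(RG)$, while by the key computation the right side equals $\HH^n(\tilde X,\ul M)$. Hence $\HH^{>0}(\tilde X,\ul M)=0$ for every $R$-module $M$, which is condition (2). The main technical obstacle is the essential surjectivity step in (2)$\Rightarrow$(1): once the key adjunction computation is in hand, Ext-agreement and full faithfulness are formal, but constructing the Postnikov tower of lifts and assembling it via hocolim into a single $\D^+(\Loc(X))$-preimage of $F$ requires careful bookkeeping to confirm that the inductive data cohere globally.
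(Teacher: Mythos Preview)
Your proposal is correct and follows essentially the same strategy as the paper: reduce to the connected case, identify $\Loc(X)\simeq\Mod(RG)$ with $\pi_!\ul R$ as projective generator, compute $\Ext^n_{\Sh(X)}(\pi_!\ul R,L)\cong\HH^n(\tilde X,\ul M)$ via the adjunction $\pi_!\dashv\pi^*$, and then run a d\'evissage/hocolim argument for full faithfulness and essential surjectivity.

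The packaging differs slightly. For full faithfulness in the direction (2)$\Rightarrow$(1), the paper first establishes the Hom-bijection for the single projective generator $\pi_!\ul R$, extends by d\'evissage to $M\in\D^-$, $N\in\D^+$, and then uses a hocolim presentation of $M$ to reach arbitrary $M$. You instead establish $\Ext$-agreement for all pairs of single objects via a free resolution of $L_1$, and then invoke an adapted-resolution argument (injectives of $\Loc(X)$ are $\Hom_{\Sh(X)}(L,-)$-acyclic, so an injective resolution in $\Loc(X)$ computes derived Hom in $\Sh(X)$ as well). This is valid---the underlying fact is the Acyclic Assembly Lemma for the upper-half-plane double complex $\Hom(A^p,C^q)$---and is arguably a bit more economical since you never leave $\D^+$. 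For (1)$\Rightarrow$(2) you are more direct than the paper, which routes through the auxiliary bounded condition (a)$'$; you just specialize full faithfulness to $\pi_!\ul R$ and $\ul M$. The essential surjectivity arguments (Postnikov tower of truncations assembled via hocolim) are identical in substance.
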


\begin{proof}
  Let $\pi \colon \tilde{X} \ra X$ be the obvious map.
  We give two more conditions.
  \begin{enumerate}[label=(\alph*)']
  \item
    \label{enum:one-stratum-equiv-Db}
    The obvious triangulated functor
    is an equivalence
    \begin{equation*}
      \D^\bd(\Loc(X)) \sira \D^\bd_\Loc(X).
    \end{equation*}
  \item
    \label{enum:proj-gen-to-Loc}
    The map
    \begin{equation}
      \label{eq:proj-gen-to-Loc-1}
      \Hom_{\D(\Loc(X))}(\pi_! \ul{R}, [q]N) \ra
      \Hom_{\D(X)}(\pi_! \ul{R}, [q]N)
    \end{equation}
    is an isomorphism for all $N \in \Loc(X)$ and $q \in \bZ$.

  \end{enumerate}
  We will show that the given four conditions are equivalent.
  The implications
  \ref{enum:one-stratum-equiv-D+-2} $\Rightarrow$
  \ref{enum:one-stratum-equiv-Db} and
  \ref{enum:one-stratum-equiv-Db} $\Rightarrow$
  \ref{enum:proj-gen-to-Loc} are clear.
  We can assume without loss of generality that $X$ is
  connected, so $\pi \colon \tilde{X} \ra X$ is the universal
  covering. Let $G$ be its group of covering
  transformations. Then $G$ acts topologically freely on
  $\tilde{X}$ and $G \backslash \tilde{X}=X$ canonically.

  \ref{enum:proj-gen-to-Loc}
  $\Longleftrightarrow$
  \ref{enum:one-stratum-H>0-univ-covering-2}:
  Clearly, the morphism \eqref{eq:proj-gen-to-Loc-1} is an
  isomorphism for
  $q=0$, and its source
  vanishes for all $q\not=0$ because
  $\pi_!\ul{R}$ is projective in $\Loc(X)$, by
  Proposition~\ref{p:proj-generator-1}.

  Note that there is an adjunction
  $(\pi_!, \pi^*)$ of exact functors between $\Sh(\tilde{X})$
  and $\Sh(X)$
  Hence the
  target of the morphism
  \eqref{eq:proj-gen-to-Loc-1} is isomorphic to
  \begin{equation*}
    \Hom_{\D(\tilde{X})}(\ul{R}, [q]\pi^*N) \cong
    \HH^q(\tilde{X}, \pi^*N).
  \end{equation*}
   Hence it suffices to show that
  the set of isoclasses of
  (non-equivariant) locally constant sheaves
  $\pi^*N$, for $N \in \Loc(X)$, coincides with the set of
  isoclasses of constant sheaves $\ul{M}$, for $M \in \Mod(R)$.
  But this is clear by the equivalence
  $\pi^* \colon \Loc(Y) \sira \Loc_G(X)$
  (Proposition~\ref{p:LocY=LocGX-1}), essential surjectivity
  of $\Loc_G(X) \ra \Loc(X)$ (use the trivial $G$-action), and the
  equivalence $\Mod(R) \sira \Loc(X)$
  (Proposition~\ref{p:Loc-simply-connected-1}.\ref{enum:equiv-products}).

  \ref{enum:proj-gen-to-Loc}  $\Rightarrow$
  \ref{enum:one-stratum-equiv-D+-2} :
  Any projective object of
  $\Loc(X)$ is a summand of a coproduct of copies of the
  projective generator
  $\pi_!\ul{R}$.
  Therefore, the map
  \begin{equation}
    \label{eq:proj-to-Loc}
    \Hom_{\D(\Loc(X))}(P, [p]N) \ra
    \Hom_{\D(X)}(P, [p]N)
  \end{equation}
  is bijective for all projective objects $P \in \Loc(X)$, all
  objects $N \in \Loc(X)$ and all $p \in \bZ$. Using d\'evissage (and the intelligent truncation) we
  deduce by standard argument that \begin{equation}
    \label{eq:D-D+-1}
    \Hom_{\D(\Loc(X))}(M, N) \ra
    \Hom_{\D(X)}(M, N)
  \end{equation}
  is an isomorphism for all $M \in \D^-(\Loc(X))$ and all
  $N \in \D^+(\Loc(X))$.

  We now show that \eqref{eq:D-D+-1}
  is even
  an isomorphism for all $M \in \D(\Loc(X))$ and all
  $N \in \D^+(\Loc(X))$.

  Since (countable) coproducts in $\Loc(X) \cong
  \Mod(RG)$ exist and are
  exact, any object $M \in \D(\Loc(X))$ can be presented as the
  homotopy
  colimit of its truncations $(\tau_{\leq n}M)_{n \in \bN}$,
  i.\,e.\ there is a triangle
  \begin{equation*}
    \bigoplus_{n \in \bN} \tau_{\leq n}M
    \xra{1-\text{shift}}
    \bigoplus_{n \in \bN} \tau_{\leq n}M
    \ra M
    \ra
  \end{equation*}
  in $\D(\Loc(X))$,
  by \cite[Lemma~2.15, Prop.~2.16]{bergh-schnuerer-gerbes}
  (cf.\ \cite[Rem.~2.3]{neeman-homotopy-limits}).
  The image of this triangle in $\D(X)$ is certainly a triangle;
  moreover, it exhibits $M$ as the
  homotopy
  colimit of its truncations $(\tau_{\leq n}M)_{n \in \bN}$
  since $\D(\Loc(X)) \ra \D(X)$ preserves coproducts.
  Now applying the cohomological functors $\Hom_{\D(\Loc(X))}(-,N)$
  and $\Hom_{\D(X)}(-,N)$ to these two triangles, using the
  universal property of the coproduct, boundedness from
  above of the objects $\tau_{\leq n}M$ and our previous
  knowledge proves the claim.

  In particular, this shows that
  $\D^+(\Loc(X)) \ra \D^+_\Loc(X)$ is full and faithful.
  It remains to show essential surjectivity.
  Clearly, all objects of $\Loc(X)$ and their shifts are in the
  essential image. Then, by intelligent truncation and fullness,
  we see that $\D^\bd_\Loc(X)$ is contained in the essential
  image.
  Now let $F \in \D^+_\Loc(X)$ be arbitrary. Since coproducts are
  exact in $\Sh(X)$, we can present $F$ as the homotopy
  colimit of its truncations $(\tau_{\leq n}F)_{n \in \bN}$ in
  $\D(X)$, by \cite[Lemma~2.15,
  Prop.~2.16]{bergh-schnuerer-gerbes}.

  Note that all objects $\tau_{\leq n}F$ are in $\D^\bd_\Loc(X)$.
  Hence there are objects $E_n \in \D^\bd(\Loc(X))$ with $E \cong
  \tau_{\leq n} F$ in $\D^\bd_\Loc(X)$.
  Moreover, by fullness (and faithfulness) proved above, each
  transition morphism
  $\tau_{\leq n} F \ra \tau_{\leq n+1} F$ is the image of some
  (unique)
  morphism $E_n \ra E_{n+1}$. The direct system
  $(E_n)_{n \in \bN}$ is a ``direct truncation
  system''
  in the terminology of
  \cite[Section~2.3]{bergh-schnuerer-gerbes} because this is true
  for the system $(\tau_{\leq n}F)_{n \in \bN}$ and our functor
  $\D(\Loc(X)) \ra \D_\Loc(X)$ is t-exact.

  Let $E$ denote the homotopy colimit of
  $(E_n)_{n \in \bN}$
  in $\D(\Loc(X))$. It comes with a triangle
  \begin{equation*}
    \bigoplus_{n \in \bN} E_n
    \xra{1-\text{shift}}
    \bigoplus_{n \in \bN} E_n
    \ra E
    \ra
  \end{equation*}
  in $\D(\Loc(X))$. Its image under the coproduct preserving
  functor
  $\D(\Loc(X)) \ra \D(X)$ presents
  $E$ as the homotopy colimit of the system
  $(E_n)_{n \in \bN}$. Since $F$ is the homotopy colimit of the
  isomorphic system
  $(\tau_{\leq n}F)_{n \in \bN}$, we obtain an isomorphism $F
  \cong E$.
  Hence $F$  is in the essential image of our functor.
\end{proof}

\begin{corollary}
  \label{c:main-one-stratum}
  Let $X$ be a
  locally simply connected,
  locally acyclic
  topological space and assume that
  the equivalent conditions
  \ref{enum:one-stratum-equiv-D+-1}
  and
  \ref{enum:one-stratum-H>0-univ-covering-1}
  in Theorem~\ref{t:main-one-stratum-1} are satisfied.
  Then the following statements are true.
  \begin{enumerate}[label=(\roman*)]
  \item
    \label{enum:RGamma-1}
    The canonical 2-morphism obtained from the universal
    property of right derived functors in the following diagram
    is a 2-isomorphism
    \begin{equation*}
      \xymatrix@R=0pc@C=0.5pc{
        {\D^+(\Loc(X))}
        \ar[dddd]
        \ar[rrrrdd]^-{\Rd_\Loc \Gamma(X,-)}\\
        &&&&\\
        &&&&
        {\D^+(R)}\\
        \\
        {\D^+(X)}
        \ar[rrrruu]_-{\Rd \Gamma(X,-)}
        \ar@{<=}[ruuu]^-{\sim}
      }
    \end{equation*}
    where $\Rd_\Loc \Gamma(X,-)$ denotes the right derived functor
    of $\Gamma(X,-) \colon \Loc(X) \ra \Mod(R)$ and
    $\Rd \Gamma(X,-)$ denotes the right derived functor
    of $\Gamma(X,-) \colon \Sh(X) \ra \Mod(R)$.

    In other words,
    if $I \ra J$ is a quasi-isomorphism from a
    bounded below complex $I$ of injectives in $\Loc(X)$ to a
    bounded below complex $J$ of injectives in $\Sh(X)$, then
    \begin{equation}
      \label{eq:Gamma-I-to-J}
      \Gamma(X,I) \ra \Gamma(X,J)
    \end{equation}
    is a quasi-isomorphism.

    Yet another way to express this is to say that injective objects
    of $\Loc(X)$ are acyclic for $\Gamma(X,-) \colon \Sh(X) \ra
    \Mod(R)$.
  \item
    \label{enum:RHom-1}
    The canonical 2-morphism obtained from the universal
    property of right derived functors in the following diagram
    is a 2-isomorphism
    \begin{equation*}
      \xymatrix@R=0pc@C=0.5pc{
        {\D^+(\Loc(X))^\opp \times \D^+(\Loc(X))}
        \ar[dddd]
        \ar[rrrrrrdd]^-{\Rd_\Loc \Hom(-,-)}\\
        &&&&&&\\
        &&&&&&
        {\D^+(\bZ)}\\
        \\
        {\D^+(X)^\opp \times \D^+(X).}
        \ar[rrrrrruu]_-{\Rd \Hom(-,-)}
        \ar@{<=}[ruuu]^-{\sim}
      }
    \end{equation*}
    In particular, given $L \in \Loc(X)$, then any injective
    object of $\Loc(X)$ is acyclic for
    $\Hom(L,-) \colon \Sh(X) \ra \Mod(\bZ)$, and any
    projective object of $\Loc(X)$ is acyclic for
    $\Hom(-,L) \colon \Sh(X)^\opp \ra \Mod(\bZ)$.
  \end{enumerate}
\end{corollary}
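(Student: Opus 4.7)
The plan is to reduce both statements (i) and (ii) to the full faithfulness of the functor $\D^+(\Loc(X)) \to \D^+(X)$ that is built into the equivalence $\D^+(\Loc(X)) \sira \D^+_\Loc(X)$ of Theorem~\ref{t:main-one-stratum-1}.

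For (i), I would first observe that the canonical 2-morphism $\Rd_\Loc \Gamma(X,-) \Ra \Rd \Gamma(X,-)$ is a 2-isomorphism if and only if every injective $I$ of $\Loc(X)$ is $\Gamma(X,-)$-acyclic as an object of $\Sh(X)$, i.e.\ $\HH^{>0}(X,I)=0$. Indeed, computing $\Rd_\Loc \Gamma(X,N)$ via an injective resolution $N \to I^\bullet$ in $\Loc(X)$ (which exists by Proposition~\ref{p:fiber-functor-1}) reduces the 2-morphism at $N$ to the edge map $\Gamma(X, I^\bullet) \to \Rd \Gamma(X, I^\bullet)$ in $\D^+(R)$, and this is a quasi-isomorphism exactly when each $I^n$ is $\Gamma$-acyclic. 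The concrete reformulation via $I \to J$ in the statement of the corollary amounts to the same condition. To prove this acyclicity, I would compute
\[
\HH^q(X, I) \;=\; \Ext^q_{\Sh(X)}(\ul{R}, I) \;=\; \Hom_{\D(X)}(\ul{R}, [q]I).
\]
Since $\ul{R}$ and $I$ both lie in (the image of) $\D^+(\Loc(X))$ and $\D^+(\Loc(X)) \to \D^+(X)$ is fully faithful by Theorem~\ref{t:main-one-stratum-1}, this group coincides with $\Hom_{\D(\Loc(X))}(\ul{R}, [q]I) = \Ext^q_{\Loc(X)}(\ul{R}, I)$, which vanishes for $q>0$ because $I$ is injective in $\Loc(X)$.

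For (ii), the strategy is identical. At a pair $(L,M) \in \D^+(\Loc(X))^\opp \times \D^+(\Loc(X))$ the canonical 2-morphism reduces, via an injective resolution $M \to I^\bullet$ in $\Loc(X)$, to the comparison of the $\Hom$-complexes $\Hom^\bullet_{\Loc(X)}(L, I^\bullet) \to \Hom^\bullet_{\Sh(X)}(L, I^\bullet)$ in $\D^+(\bZ)$. Passing to cohomology turns this into the comparison $\Ext^q_{\Loc(X)}(L,M) \to \Ext^q_{\Sh(X)}(L,M)$, which is an isomorphism for all $q$ by the same full faithfulness, applied to the pair $(L, [q]M)$ in $\D^+(\Loc(X))$. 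The asserted acyclicity statements are then immediate: for $I$ injective in $\Loc(X)$, $\Ext^q_{\Sh(X)}(L, I) = \Ext^q_{\Loc(X)}(L, I) = 0$ for $q>0$; for $P$ projective in $\Loc(X)$, $\Ext^q_{\Sh(X)}(P, L) = \Ext^q_{\Loc(X)}(P, L) = 0$ for $q>0$.

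The only delicate step is the first one—translating the abstract 2-morphism coming from the universal property of right derived functors into the concrete acyclicity condition $\HH^{>0}(X, I) = 0$ for $I$ injective in $\Loc(X)$ (and the analogous assertion for $\Hom$). This is formal but requires some care; once it is set up, everything else is a two-line application of full faithfulness combined with the defining vanishing property of an injective object.
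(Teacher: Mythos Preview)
Your proof is correct and follows essentially the same approach as the paper: both arguments reduce the claim to the full faithfulness of $\D^+(\Loc(X)) \ra \D^+(X)$ from Theorem~\ref{t:main-one-stratum-1}, identifying $\Ext^q_{\Sh(X)}$ with $\Ext^q_{\Loc(X)}$ and then using injectivity/projectivity in $\Loc(X)$. One small expository slip: in part~(ii) the map you write as $\Hom^\bullet_{\Loc(X)}(L, I^\bullet) \to \Hom^\bullet_{\Sh(X)}(L, I^\bullet)$ is literally the identity (since $\Loc(X)\subset\Sh(X)$ is full); the actual comparison is with $\Hom^\bullet(L,J^\bullet)$ for an injective resolution $J^\bullet$ in $\Sh(X)$, exactly as the paper sets it up, but your conclusion on cohomology is correct regardless.
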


\begin{proof}
  Note that \ref{enum:RGamma-1} is a special case of
  \ref{enum:RHom-1} since $\Hom(\ul{R}_X,-)=\Gamma(X,-)$.  So let
  us prove \ref{enum:RHom-1}. Let $A$ be a bounded below complex in $\Loc(X)$.
  Let $I \ra J$ be a quasi-isomorphism from a bounded below
  complex $I$ of injectives in $\Loc(X)$ to a bounded below
  complex $J$ of injectives in $\Sh(X)$. We need to show that the
  induced morphism
  \begin{equation*}
    \Hom(A,I) \ra \Hom(A,J)
  \end{equation*}
  is a quasi-isomorphism. The $m$-th cohomology of this morphism
  is
  \begin{equation*}
    \Hom_{\K(\Loc(X))}(A,I[m]) \ra \Hom_{\K(X)}(A,J[m])
  \end{equation*}
  which is identified with
  \begin{equation*}
    \Hom_{\D(\Loc(X))}(A,I[m]) \ra \Hom_{\D(X)}(A,J[m])
  \end{equation*}
  by our assumptions on $I$ and $J$. But this morphism factors as
  \begin{equation*}
    \Hom_{\D(\Loc(X))}(A,I[m]) \ra
    \Hom_{\D(X)}(A,I[m]) \ra
    \Hom_{\D(X)}(A,J[m])
  \end{equation*}
  where the first morphism is an isomorphism
  by Theorem~\ref{t:main-one-stratum-2}
  and the second morphism is an isomorphism since $I \ra J$ is an
  isomorphism in $\D(X)$.
\end{proof}

\begin{remark}
  Let $X$ be as in
  Theorem~\ref{t:main-one-stratum-2}
  and assume that $\D^+(\Loc(X)) \ra \D^+_\Loc(X)$
  is an equivalence, i.\,e.\
  condition~\ref{enum:one-stratum-equiv-D+-1} there is satisfied.
  We do not know if the similar functor
  $\D(\Loc(X) \ra \D_\Loc(X)$ is an equivalence as well.
  \end{remark}



\section{Constructible sheaves: Proof of Proposition \ref{p:loc-sc-grothendieck-1} and Theorem \ref{t:equivalent-conditions-for-equivalence-1}}
\label{sec:constr-sheav}

\begin{para}
  \label{para:S-open-in-olS-poset}
  If $(X, \mathcal{S})$ is a stratified space, any stratum $S \in
  \mathcal{S}$ is open in its closure $\ol{S}$ because $S$ is a
  locally closed subset of $X$.
  Given $S, T \in \mathcal{S}$ write $S \leq T$ if $S \subset
  \ol{T}$. Then $\leq$ defines a partial order.
  on
  $\mathcal{S}$. Since $\mathcal{S}$ is finite, there are minimal
  and maximal elements, if $X\not=\emptyset$. In particular,
  there exists a closed
  stratum and an open stratum if $X \not=\emptyset$.
\end{para}

\begin{proposition}
  \label{p:loc-sc-grothendieck} (=Proposition \ref{p:loc-sc-grothendieck-1})
  Let $(X, \mathcal{S})$ be a
  \ref{enum:loc-sc}-stratified space.
  Then $\Cons(X, \mathcal{S})$ is a Grothendieck abelian category
  and the
  inclusion functor
  $\Cons(X, \mathcal{S}) \ra \Sh(X)$ is exact and cocontinuous;
  in   particular, coproducts in $\Sh(X)$ of families of
  objects in $\Cons(X, \mathcal{S})$ are again in $\Cons(X,
  \mathcal{S})$.

  Assume in addition that the stratification is
  \ref{enum:loc-sa}. Then
  $\Cons(X, \mathcal{S})$ is a weak Serre subcategory of
  $\Sh(X)$.  In particular, $\D_\mathcal{S}(X)$ is a thick
  triangulated subcategory of $\D(X)$
  which is closed under
  coproducts in $\D(X)$.
\end{proposition}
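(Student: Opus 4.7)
The plan is to check every assertion stratum by stratum via the restriction functors and to invoke Proposition~\ref{p:Loc-lsc-lsa-1} on each stratum. For each stratum $S$ with inclusion $l_S \colon S \ra X$, the functor $l_S^* \colon \Sh(X) \ra \Sh(S)$ is exact and, as a left adjoint to $l_{S,*}$, commutes with arbitrary colimits. By definition, $F \in \Cons(X,\mathcal{S})$ if and only if $l_S^* F \in \Loc(S)$ for every stratum $S$, so membership in $\Cons(X,\mathcal{S})$ can be tested one stratum at a time.

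For the abelian structure, let $f \colon F \ra G$ be a morphism in $\Cons(X,\mathcal{S})$. Its kernel and cokernel in $\Sh(X)$ restrict on each $S$ to those of $l_S^*(f)$ in $\Sh(S)$, which lie in $\Loc(S)$ by Proposition~\ref{p:Loc-lsc-lsa-1} applied to the locally simply connected stratum $S$. Hence both are $\mathcal{S}$-constructible, so $\Cons(X,\mathcal{S})$ is a full abelian subcategory with exact inclusion. The same reduction shows that arbitrary colimits in $\Sh(X)$ of families in $\Cons(X,\mathcal{S})$ remain in $\Cons(X,\mathcal{S})$, since $l_S^*$ preserves colimits and $\Loc(S)$ is closed under colimits in $\Sh(S)$ by Proposition~\ref{p:Loc-lsc-lsa-1}; this gives cocontinuity of the inclusion. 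Exactness of filtered colimits in $\Sh(X)$ then passes to $\Cons(X,\mathcal{S})$, yielding AB5.

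The main obstacle is producing a generating set so as to conclude Grothendieck-ness. The plan is to combine the stratum-wise generators provided by Proposition~\ref{p:fiber-functor-1} and to transport them to $X$ by induction on the finite partially ordered set $\mathcal{S}$ described in~\ref{para:S-open-in-olS-poset}: start from a closed stratum (whose inclusion admits an exact extension-by-zero from outside), use the standard open/closed recollement to glue with the stratification restricted to the complementary open union of strata, and assemble a generating family from pushforwards of generators along these adjoints. Together with AB5 and closure under colimits already established, this will establish the Grothendieck property.

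Finally, assume in addition~\ref{enum:loc-sa} and let $0 \ra F' \ra F \ra F'' \ra 0$ be short exact in $\Sh(X)$ with $F',F'' \in \Cons(X,\mathcal{S})$. Restricting to any stratum $S$ yields a short exact sequence in $\Sh(S)$ whose outer terms lie in $\Loc(S)$; Proposition~\ref{p:Loc-lsc-lsa-1}, now using that $S$ is locally 1-acyclic, forces $l_S^* F \in \Loc(S)$, so $F \in \Cons(X,\mathcal{S})$. Hence $\Cons(X,\mathcal{S})$ is a weak Serre subcategory of $\Sh(X)$. That $\D_\mathcal{S}(X)$ is a thick triangulated subcategory of $\D(X)$ then follows from the general fact about weak Serre subcategories recalled in the Conventions, and closure under coproducts in $\D(X)$ follows because cohomology commutes with coproducts (as $\Sh(X)$ is AB4) together with the closure of $\Cons(X,\mathcal{S})$ under coproducts in $\Sh(X)$ established above.
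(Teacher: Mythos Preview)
Your proposal is correct and follows exactly the paper's approach: the paper's proof consists of the single observation that each $s^* \colon \Sh(X) \ra \Sh(S)$ is exact and commutes with colimits, so that ``all claims are easily deduced from Proposition~\ref{p:Loc-lsc-lsa-1}.'' You have simply spelled out those deductions; in particular your inductive recollement argument for the generator makes explicit a point the paper leaves implicit.
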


\begin{proof}
  If $s \colon S \ra X$ is the inclusion of a stratum,
  then $s^* \colon \Sh(X) \ra \Sh(S)$ is exact and commutes
  with colimits. Using this, all claims are easily deduced
  from   Proposition~\ref{p:Loc-lsc-lsa-1}.
\end{proof}

\begin{para} Assume that $(X, \mathcal{S})$ is $\ref{enum:loc-sc}-\ref{enum:loc-sa}$-stratified space. Then by Proposition \ref{p:loc-sc-grothendieck}
$\Cons(X, \mathcal{S})$ is a Grothendieck abelian category which is a weak Serre subcategory of $\Sh (X)$. Therefore there exists a triangulated category $\D ^+ (\Cons(X, \mathcal{S}))$ and also $\D ^+_{\mathcal{S}}(X)$ is a thick triangulated subcategory of $\D ^+(X)$. We have the obvious triangulated realization functor
\begin{equation}
\label{eq:real}\real :\D ^+ (\Cons(X, \mathcal{S}))\ra \D ^+_{\mathcal{S}}(X)
\end{equation}
and Theorem \ref{t:equivalent-conditions-for-equivalence-1} claims that \eqref{eq:real} is an equivalence under some conditions.

\subsection{Strategy of the proof of Theorem \ref{t:equivalent-conditions-for-equivalence-1}}
We will prove Theorem \ref{t:equivalent-conditions-for-equivalence-1} by induction on the number of strata. The case of one stratum is included in the statement of the theorem. Both categories $\D ^+ (\Cons(X, \mathcal{S}))$  and $\D ^+_{\mathcal{S}}(X)$ can be constructed by gluing the corresponding categories for individual strata. For the induction step we need to check that the realization functor is compatible with the gluing on each side.
Such a compatibility is given by a 2-morphism between functors. We carefully construct these 2-morphisms and check that they are isomorphisms.
\end{para}

\begin{para}
From now on we assume that all stratified spaces are  $\ref{enum:loc-sc}-\ref{enum:loc-sa}$-stratified.
\end{para}



\begin{definition}
  \label{d:stratified-subspace} Let $(X,\mathcal{S})$ be a stratified space.
  A locally closed embedding $e \colon Z \ra X$ is called
  \define{stratified} if its image is a union of elements of
  $\mathcal{S}$. Then $Z$ together with the \define{induced
    stratification}
  $\mathcal{S}_Z:=\{e^{-1}(S) \mid \text{$S \in \mathcal{S}$
    with $S \subset e(Z)$}\}$
  is a stratified space. Clearly it is also $\ref{enum:loc-sc}-\ref{enum:loc-sa}$-stratified.
  A stratified locally closed
  embedding is usually written
  $e \colon (Z, \mathcal{S}_Z) \ra (X, \mathcal{S})$.
\end{definition}

\begin{para}
  Let $e \colon Z \ra X$ be a locally closed embedding of
  topological spaces. Then there are two pairs $(e^*,e_*)$ and
  $(e_!, e^!)$ of adjoint functors
  \begin{align}
    \label{eq:*-adj-1}
    e^* \colon \Sh(X) & \rightleftarrows \Sh(Z) \colon e_*,\\
    \label{eq:!-adj-1}
    e_! \colon \Sh(Z) & \rightleftarrows \Sh(X) \colon e^!.
  \end{align}
  We emphasize that $e^!$ already exists on the abelian level
  for a locally closed embedding.
  The left adjoint functors $e^*$ and $e_!$ are exact, the right
  adjoints $e_*$ and $e^!$ are left exact.
  If $e$ is an open embedding, then $e^!=e^*$ and
  $(e_!, e^!=e^*, e_*)$ is an adjoint triple.
  If $e$ is a closed embedding, then $e_!=e_*$ and
  $(e^*, e_*=e_!, e^!)$ is an adjoint triple.
  In general, we obtain induced adjunctions
  \begin{align}
    \label{eq:*-adj-D-1}
    e^* \colon \D^+(X)
    & \rightleftarrows \D^+(Z) \colon \Rd e_*,\\
    \label{eq:!-adj-D-1}
    e_! \colon \D^+(Z)
    & \rightleftarrows \D^+(X) \colon \Rd e^!
  \end{align}
  on the derived level.
\end{para}

\begin{para}
  \label{p:e^*-e_!-preserve}
  Let $e \colon (Z, \mathcal{S}_Z) \ra (X, \mathcal{S})$ be a
  stratified locally closed embedding. Then the two exact functors
  $e^*$ and
  $e_!$ trivially preserve constructible sheaves and
  constructible complexes (with respect to
  the stratifications $\mathcal{S}$ and $\mathcal{S}_Z$), i.\,e.\
  they induce functors
  \begin{align*}
    e^* \colon \Cons(X, \mathcal{S})
    & \ra \Cons(Z, \mathcal{S}_Z),
    & e_! \colon \Cons(Z, \mathcal{S}_Z)
    & \ra \Cons(X, \mathcal{S}),\\
    e^* \colon \D^+(\Cons(X, \mathcal{S}))
    & \ra \D^+(\Cons(X, \mathcal{S})),
    &  e_! \colon \D^+(\Cons(Z, \mathcal{S}_Z))
    & \ra \D^+(\Cons(X, \mathcal{S})),\\
    e^* \colon \D_\mathcal{S}(X)
    & \ra \D_{\mathcal{S}_Z}(Z),
    & e_! \colon \D_{\mathcal{S}_Z}(Z)
    & \ra \D_\mathcal{S}(X)).
  \end{align*}

 %
  If $d \colon (Y, \mathcal{S}_Y) \ra (Z, \mathcal{S}_Z)$ is
  another stratified locally closed embedding, then obviously
  \begin{equation}
    \label{eq:composition-^*-_!}
    d^* \circ e^* \sira (e \circ d)^*
    \quad \text{and} \quad
    e_! \circ d_! \sira (e \circ d)_!
  \end{equation}
  on the abelian and the triangulated level.
\end{para}

\begin{para}
  \label{para:e^*-e_!-commute-real}
  Let $e \colon (Z, \mathcal{S}_Z) \ra (X, \mathcal{S})$ be a
  stratified locally closed embedding.
  Then the respective identities are 2-isomorphisms
  \begin{equation}
    \label{eq:real-e^*}
    e^* \circ \real
    \xRa[=]{\id}
    \real \circ e^*
  \end{equation}
  between functors from $\D^+(\Cons(X, \mathcal{S}))$ to
  $\D^+(Z)$
  and
  \begin{equation}
    \label{eq:real-e_!}
    e_! \circ \real
    \xRa[=]{\id}
    \real \circ e_!
  \end{equation}
  between functors from $\D^+(\Cons(Z, \mathcal{S}_Z))$ to
  $\D^+(X)$, as illustrated by the commutative diagrams
  \begin{equation*}
    \xymatrix{
      {\D^+(\Cons(X, \mathcal{S}))}
      \ar[r]^-{e^*} \ar[d]^-{\real} &
      {\D^+(\Cons(Z, \mathcal{S}_Z))}
      \ar[d]^-{\real}\\
      {\D^+(X)}
      \ar[r]^-{e^*} &
      {\D^+(Z),}
    }
    \quad
    \xymatrix{
      {\D^+(\Cons(Z, \mathcal{S}_Z))}
      \ar[r]^-{e_!} \ar[d]^-{\real} &
      {\D^+(\Cons(X, \mathcal{S}))}
      \ar[d]^-{\real}\\
      {\D^+(Z)}
      \ar[r]^-{e_!} &
      {\D^+(X)}
    }
  \end{equation*}
\end{para}

\begin{definition}
  A stratification $\mathcal{S}$ of a topological space $X$ is
  called \ref{enum:l_*-preserves-cons} if it satisfies the
  following condition:
    If $l \colon S \ra X$ is the inclusion of any stratum $S \in
    \mathcal{S}$, then
    \textcolor{red}{(cf. \cite[2.1.13, p.~61]{BBD})}
      the right derived functor $\Rd l_* \colon \D^+(S) \ra
      \D^+(X)$
      of $l_* \colon \Sh(S) \ra \Sh(X)$
      maps objects of $\Loc(S)$ (viewed as complexes concentrated
      in degree zero) to
      $\D^+_\mathcal{S}(X)$.
\end{definition}

\begin{lemma}
  \label{t:preserve-cons-1}
  Let $(X, \mathcal{S})$ be a
  \ref{enum:l_*-preserves-cons}-stratified space
  and $e \colon (Z, \mathcal{S}_Z) \ra (X, \mathcal{S})$ a
  stratified locally closed embedding. Then
  the induced stratification $\mathcal{S}_Z$ is
  \ref{enum:l_*-preserves-cons} and in addition to the functors $e^*, e_!$, also the functors
    $e_*, e_!$
  preserve constructible sheaves (where constructibility refers
  to the
  stratifications $\mathcal{S}$ and $\mathcal{S}_Z$,
  respectively). In
  particular,
  the adjunctions
  \eqref{eq:*-adj-1} and
  \eqref{eq:!-adj-1} restrict to adjunctions
  \begin{align}
    \label{eq:*-adj-cons-1}
    e^* \colon \Cons(X, \mathcal{S})
    & \rightleftarrows \Cons(Z, \mathcal{S}_Z) \colon e_*,\\
    \label{eq:!-adj-cons-1}
    e_! \colon \Cons(Z, \mathcal{S}_Z)
    & \rightleftarrows \Cons(X, \mathcal{S}) \colon e^!.
  \end{align}
  Also the four functors
  $e^*, \Rd e^!$,
  $\Rd e_*, e_!$ preserve constructible complexes with bounded
  below cohomology (with respect
  to the
  stratifications $\mathcal{S}$ and $\mathcal{S}_Z$). In
  particular,
  the adjunctions
  \eqref{eq:*-adj-D-1} and
  \eqref{eq:!-adj-D-1} restrict to adjunctions
  \begin{align}
    \label{eq:*-adj-D-cons}
    e^* \colon \D^+_\mathcal{S}(X)
    & \rightleftarrows \D^+_{\mathcal{S}_Z}(Z) \colon \Rd e_*,\\
    \label{eq:!-adj-D-cons}
    e_! \colon \D^+_{\mathcal{S}_Z}(Z)
    & \rightleftarrows \D^+_\mathcal{S}(X) \colon \Rd e^!.
  \end{align}
\end{lemma}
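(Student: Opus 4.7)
The plan is to first establish the (cons) condition for the induced stratification $\mathcal{S}_Z$ directly from (cons) for $\mathcal{S}$, and then to factor $e$ as a closed embedding followed by an open embedding and reduce the preservation claims to two focused subclaims that I prove by an interleaved induction.

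For (cons) on $\mathcal{S}_Z$: if $l_T^Z \colon T \hookrightarrow Z$ is the inclusion of a stratum of $\mathcal{S}_Z$, then $e \circ l_T^Z = l_T^X$ is the corresponding stratum inclusion into $X$. For $L \in \Loc(T)$, the (cons) assumption for $X$ places $\Rd(l_T^X)_* L = \Rd e_* \circ \Rd(l_T^Z)_* L$ in $\D^+_\mathcal{S}(X)$. The identity $e^* \circ \Rd e_* \simeq \id$ holds for any locally closed $e$: factor $e = j \circ i$ with $i$ closed and $j$ open, and combine $j^* \Rd j_* \simeq \id$ with $i^* i_* \simeq \id$. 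Applying $e^*$ (which preserves constructibility by \ref{p:e^*-e_!-preserve}) to $\Rd(l_T^X)_* L$ then gives $\Rd(l_T^Z)_* L \in \D^+_{\mathcal{S}_Z}(Z)$, which is (cons) for $\mathcal{S}_Z$.

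Next, factor $e = j \circ i$ by setting $U := X \setminus (\ol{Z} \setminus Z)$, which is open in $X$ because $\ol{Z} \setminus Z$ is a closed union of strata; then $i \colon Z \hookrightarrow U$ is closed stratified and $j \colon U \hookrightarrow X$ is open stratified, with $\Rd e_* = \Rd j_* \circ i_*$ and $\Rd e^! = \Rd i^! \circ j^*$. Since $i_*$ (extension by zero) and $j^* = j^!$ trivially preserve constructibility, the lemma reduces to two claims applicable to arbitrary (loc-sc)-(loc-sa)-(cons)-stratified ambient spaces: (A) $\Rd j_*$ preserves constructibility for open stratified $j$, and (B) $\Rd i^!$ preserves constructibility for closed stratified $i$. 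Claim (B) reduces to (A): from the recollement triangle $i_* \Rd i^! G \to G \to \Rd j'_* (j')^* G \to$ with $j'$ the complementary open inclusion, once $\Rd j'_* (j')^* G$ is known constructible by (A), thickness gives constructibility of $i_* \Rd i^! G$, and then $i^* i_* \simeq \id$ transfers it to $\Rd i^! G$ on $Z$.

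The crux is (A), which I prove by induction on $|\mathcal{T}_V|$ for any ambient $(Y, \mathcal{T})$ satisfying the hypotheses. The base case $|\mathcal{T}_V| = 1$ is (cons) for $Y$, extended from locally constant sheaves to $\D^+_\Loc$ by truncation and thickness. For the inductive step, pick a closed stratum $S_0$ of $V$, set $V_0 = V \setminus S_0$, and let $i_0 \colon S_0 \hookrightarrow V$, $j_0 \colon V_0 \hookrightarrow V$. Apply $\Rd j_*$ to the triangle $i_{0*} i_0^! F \to F \to \Rd(j_0)_* j_0^* F \to$ on $V$ for $F \in \D^+_{\mathcal{T}_V}(V)$, obtaining
\begin{equation*}
\Rd(l_{S_0}^Y)_* i_0^! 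F \to \Rd j_* F \to \Rd(j \circ j_0)_* j_0^* F \to .
\end{equation*}
The third term is constructible by the inductive hypothesis applied to the open inclusion $j \circ j_0 \colon V_0 \hookrightarrow Y$, whose source has one fewer stratum than $V$. The first term is constructible via (cons) for $Y$ plus dévissage once $i_0^! F \in \D^+_\Loc(S_0)$; the latter is claim (B) for $i_0$ inside the ambient $(V, \mathcal{T}_V)$, which is (cons) by the first step of the proof, and there (B) reduces to (A) for the open inclusion $j_0 \colon V_0 \hookrightarrow V$, which holds by the inductive hypothesis. The main obstacle is orchestrating this interleaving of (A) and (B) across different ambient spaces, and this is precisely why the preliminary passage of (cons) to stratified subspaces had to be done before setting up the induction.
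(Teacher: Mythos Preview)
Your proof is correct and follows essentially the same approach as the paper: both first transfer (cons) to $\mathcal{S}_Z$ via $e^*\Rd e_*\simeq \id$, reduce the main claim to the case of an open stratified embedding, and then run an induction on the number of strata in the source using the recollement triangle, with the $\Rd i^!$ case feeding back into the open case exactly as you describe. The only cosmetic difference is that you factor $e$ as closed-then-open (through $U=X\setminus(\ol{Z}\setminus Z)$) whereas the paper factors it as open-then-closed (through $\ol{Z}$), and your induction is phrased over all ambient (cons)-spaces with parameter $|\mathcal{T}_V|$ while the paper's is a looser double induction on the strata counts of $X$ and $Z$; the underlying argument is identical.
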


\begin{proof} Let $S\in \mathcal{S}$ be a stratum contained in $Z$ and let $l:S\hookrightarrow Z$ be the locally closed embedding. Then the functor
$$\Rd l_* :\D ^+(S)\to \D ^+(Z)$$
is isomorphic to the composition of functors
$$\Rd l_*\simeq e^*\circ \Rd (e\circ l)_*$$
By our assumption the functor $\Rd (e\circ l)_*$ maps $\D ^+(S)$ to $\D _{\mathcal{S}}(X)$. Also (as mentioned above) the functor $e^*$ maps
$\D _{\mathcal{S}}(X)$ to $\D _{\mathcal{S}_Z}(Z)$. Hence the induced stratification $\mathcal{S}_Z$ is \ref{enum:l_*-preserves-cons}.

The assertions about the exact functors $e^*$ and $e_!$ are obvious and were discussed above. We only consider the functors $e_*$ and $e^!$ and there derived ones.

\medskip
\noindent{\underline{Step 1.}}
The functors $e_*$ and $e^!$ are left exact. Therefore if one proves that
for any $A\in \D^+_{\mathcal{S}_Z}(Z)$ the object $\Rd e_*A$ belongs to the category $\D^+_\mathcal{S}(X)$, this would imply that the functor $e_*$
maps the category $\Cons(Z, \mathcal{S}_Z)$ to $\Cons (X, \mathcal{S})$. Similarly for the functor $e^!$, it suffices to prove that for any $B\in \D^+_\mathcal{S}(X)$, the object $\Rd e^!B$ belongs to $\D^+_{\mathcal{S}_Z}(Z)$. We first consider the functor $\Rd e_*$.

\medskip
\noindent{\underline{Step 2.}} We will proceed by induction on the number of strata in $X$ and in $Z$. Assume that $Z$ consists of one stratum. By assumption for any $F\in \Loc (Z)$ we have $\Rd e_*(F)\in D_{\mathcal{S}}^+(X)$. Since $e_*$ is left exact it follows that $\Rd e_* (A)\in \D^+_\mathcal{S}(X)$

\medskip
\noindent{\underline{Step 3.}} Let $\overline{Z}\subset X$ be the closure of $Z$. Then the embedding $e:Z\hookrightarrow X$ is the composition of stratified locally closed embeddings
\begin{equation}\label{factorization} Z\stackrel{t}{\ra} \overline{Z}\stackrel{\overline{e}}{\ra} X
\end{equation}
where $t$ is an open embedding and $\overline{e}$ is a closed one.
The functor $\overline{e}_*=\overline{e}_!$ is exact and obviously
$$\overline{e}_*\colon \D ^+_{\mathcal{S}_{\overline{Z}}}(\overline{Z})\to D^+_{\mathcal{S}}(X)$$
Hence we may assume that $e\colon Z\to X$ is an open embedding.

\medskip
\noindent{\underline{Step 4.}} Choose a closed (in $Z$) stratum $F\subset Z$. Let
$$F\stackrel{s}{\ra}Z\stackrel{j}{\la}U:=Z-F$$
be the corresponding stratified closed and open embeddings. For $A\in \D ^+_{\mathcal{S}_Z}(Z)$ we have the exact triangle in $\D ^+(Z)$:
\begin{equation}\label{ex-tr-z}
s_*\Rd s^!A\to A\to \Rd j_*j^*A
\end{equation}
There are more strata in $X$ than in $Z$. So by induction on the number of strata we may assume that $\Rd s^!A\in \D _{\mathcal{S}_F}^+(F)$. Hence $s_*\Rd s^!A\in \D ^+_{\mathcal{S}_Z}(Z)$. By the same induction we may assume that $\Rd j_*j^*A\in  \D ^+_{\mathcal{S}_Z}(Z)$. Now using the exact triangle \eqref{ex-tr-z} it suffices to prove that
\begin{equation}\label{remains}\Rd e_*s_*\Rd s^!A,\  \Rd e_*\Rd j_*j^*A\in \D _{\mathcal{S}}^+(X)
\end{equation}
This follows from the isomorphisms
$$\Rd e_*s_*\Rd s^!A=\Rd (es)_*\Rd s^!A,\quad \Rd e_*\Rd j_*j^*A=\Rd (ej)_* j^*A$$
and the induction on the number of strata (this time we compare the number of strata in $Z$, $F$ and $U$).

\medskip
\noindent{\underline{Step 5.}} Now given $B\in \D_{\mathcal{S}}^+(X)$ we want to prove that $\Rd e^!B\in \D _{\mathcal{S}_Z}^+(Z)$. Consider again the factorization \eqref{factorization}. Then $t^!=t^*$, hence we may assume that $e\colon Z\to X$ is a closed embedding. Let $\alpha \colon V:=X-Z\ra X$ be the complementary open embedding. We have the exact triangle in $\D ^+(X)$:
$$e_!\Rd e^!B\ra B\ra \Rd \alpha _*\alpha ^*B$$
Clearly $\alpha ^*B\in \D ^+_{\mathcal{S}_V}(V)$, hence by what we proved above, $\Rd \alpha _*\alpha ^*B\in \D ^+_{\mathcal{S}}(X)$. It follows that
$e_!\Rd e^!B \in \D ^+_{\mathcal{S}}(X)$. But then clearly $\Rd e^!B\in \D ^+_{\mathcal{S}_Z}(Z)$.
\end{proof}

\begin{para} From now on we assume that all stratified spaces are $\ref{enum:loc-sc}-\ref{enum:loc-sa}-
\ref{enum:l_*-preserves-cons}$-stratified.
\end{para}

\begin{lemma}
  \label{l:*-unit-split-epi-on-inj}
  Let $(X, \mathcal{S})$ be a
  stratified space,
  $j \colon (U, \mathcal{S}_U) \ra (X, \mathcal{S})$ a
  stratified open embedding, and $I$
  an injective object of $\Cons(X, \mathcal{S})$.
  Then the adjunction unit $I \ra j_*j^*I$ is a split
  epimorphism of injectives.
\end{lemma}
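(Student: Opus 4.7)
The plan is to identify the kernel of $\eta$ as an injective subobject that splits off, and then to force the resulting quotient to exhaust all of $j_*j^*I$. Let $i \colon (Z, \mathcal{S}_Z) \ra (X, \mathcal{S})$ be the complementary stratified closed embedding, with $Z := X \setminus j(U)$. By Lemma~\ref{t:preserve-cons-1}, the six usual functors preserve constructibility and the adjunctions $(j_!, j^*)$, $(j^*, j_*)$, $(i^*, i_*)$, $(i_*, i^!)$ are all available on the constructible level. Since each of the right adjoints $j^*$, $j_*$, $i_*$, $i^!$ admits an exact left adjoint (namely $j_!$, $j^*$, $i^*$, $i_* = i_!$), all four preserve injectives. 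In particular, both $j_*j^*I$ and $i_*i^!I$ are injective objects of $\Cons(X, \mathcal{S})$.

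Next, on the sheaf level the kernel of the unit $\eta \colon I \ra j_*j^*I$ is the subsheaf of $I$ of sections supported on $Z$, which coincides with $i_*i^!I$; as the inclusion $\Cons(X, \mathcal{S}) \hra \Sh(X)$ is exact, this is also the kernel inside $\Cons(X, \mathcal{S})$. Since $i_*i^!I$ is injective as a subobject of $I$, extending the identity $i_*i^!I \ra i_*i^!I$ along the inclusion produces a retraction, hence a decomposition $I \cong i_*i^!I \oplus Q$ with $Q \cong \image(\eta)$ also injective (as a direct summand of $I$). The morphism $\eta$ then factors as the split projection $I \sra Q$ followed by a monomorphism $\eta|_Q \colon Q \hra j_*j^*I$.

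The remaining step, which I expect to be the main obstacle, is to show that this inclusion $Q \hra j_*j^*I$ is an isomorphism. I would argue as follows: since $Q$ is injective, the inclusion splits off a decomposition $j_*j^*I \cong Q \oplus Q'$ with $Q' \in \Cons(X, \mathcal{S})$. Applying $j^*$ and using $j^*j_* = \id$ together with $j^*(i_*i^!I) = 0$ shows that $j^*Q' = 0$, so $Q' \cong i_*F$ for some $F \in \Cons(Z, \mathcal{S}_Z)$. The crucial vanishing
\[
\Hom_{\Cons(X, \mathcal{S})}(i_*F, j_*j^*I) = \Hom_{\Cons(U, \mathcal{S}_U)}(j^*i_*F, j^*I) = \Hom(0, j^*I) = 0,
\]
obtained from the adjunction $j^* \dashv j_*$ together with the open/closed identity $j^*i_* = 0$, forces the summand inclusion $Q' \hra j_*j^*I$ to vanish; since it is a monomorphism, $Q' = 0$. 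Hence $\eta$ identifies with the split projection $I \cong i_*i^!I \oplus Q \sra Q = j_*j^*I$, a split epimorphism whose source and target are injective.
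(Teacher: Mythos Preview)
Your proof is correct, but the route differs from the paper's. The paper never introduces the closed complement $i$. Instead it observes that the composite
\[
j_!j^*I \xra{\delta_I} I \xra{\eta_I} j_*j^*I
\]
(the canonical map $j_! \Rightarrow j_*$ evaluated at $j^*I$) is a monomorphism; since $I$ is injective, the map $\delta_I \colon j_!j^*I \to I$ extends along this mono to some $\alpha \colon j_*j^*I \to I$. One then checks $\eta_I \circ \alpha = \id_{j_*j^*I}$ by applying $j^*$: both $j^*\delta_I$ and $j^*\eta_I$ are isomorphisms (triangle identities plus full faithfulness of $j_!$ and $j_*$), so $j^*(\eta_I \circ \alpha) = \id$, and then faithfulness of $j^*$ on morphisms between objects in the essential image of $j_*$ gives the conclusion.

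Your argument via $\ker \eta = i_*i^!I$ and the splitting-off of injective summands is a legitimate alternative; it makes the geometry of where $\eta$ fails to be an isomorphism explicit and uses only the standard open/closed recollement identities. The paper's argument is shorter and stays entirely on the open side, needing only the single mono $j_!j^*I \hookrightarrow j_*j^*I$ rather than the full exact sequence $0 \to i_*i^!I \to I \to j_*j^*I$. Both buy the same thing; yours is slightly more hands-on, theirs slightly slicker.
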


\begin{proof}
  We use Lemma~\ref{t:preserve-cons-1}.
  Let
  $\eta \colon \id \ra j_*j^*$ be the unit and
  $\epsilon \colon j^*j_* \ra \id$ be the counit of the adjunction
  $j^* \colon \Cons(X, \mathcal{S}) \rightleftarrows \Cons(U,
  \mathcal{S}_U) \colon j_*$.
  Let
  $\zeta \colon \id \ra j^!j_!=j^*j_!$ be the unit and
  $\delta \colon j_!j^!=j_!j^* \ra \id$ be the counit of the
  adjunction
  $j_! \colon \Cons(X, \mathcal{S}) \rightleftarrows \Cons(U,
  \mathcal{S}_U) \colon j^!=j^*$.
  Since the composition in the first row of the following diagram
  is a monomorphism
  and $I$ is injective in $\Cons(X, \mathcal{S})$,
  the dotted arrow $\alpha$ exists making the diagram commutative.
  \begin{equation*}
    \xymatrix{
      {j_!j^*I} \ar[r]^-{\delta_I} \ar[d]_-{\delta_I} &
      {I} \ar[r]^-{\eta_I} &
      {j_*j^*I} \ar@{..>}[lld]^-{\alpha} \\
      {I}
    }
  \end{equation*}
  We claim that $\eta_I \circ \alpha = \id_{j_*j^*I}$.
  We apply $j^*$ to our diagram and get
  \begin{equation*}
    \xymatrix{
      {j^*j_!j^*I} \ar[r]^-{j^*\delta_I}_-{\sim}
      \ar[d]_-{j^*\delta_I}^-{\sim} &
      {j^*I} \ar[r]^-{j^*\eta_I}_-{\sim} &
      {j^*j_*j^*I} \ar@{..>}[lld]^-{j^*\alpha} \\
      {j^*I,}
    }
  \end{equation*}
  where $j^*\delta_I$ and
  $j^*\eta_I$ are isomorphisms; this follows from the commutative
  triangles in the definition of an adjunction and the fact that
  $\zeta$ and $\epsilon$ are 2-isomorphisms.
  Hence $j^*\alpha$ is the inverse of $j^*\eta_I$. This implies
  \begin{equation}
    \label{eq:j^*-eta-alpha}
    j^*(\eta_I \circ \alpha) =j^*\eta_I \circ j^*\alpha =
    \id_{j^*j_*j^*I}.
  \end{equation}
  For arbitrary objects
  $A, B \in \Cons(U, \mathcal{S}_U)$, the first arrow in
  \begin{equation*}
    {\Hom(j_*A, j_*B)} \xra{j^*}
    {\Hom(j^*j_*A, j^*j_*B)} \xra{\epsilon_B \circ ?}
    {\Hom(j^*j_*A, B)}
  \end{equation*}
  is bijective because the composition is the adjunction bijection
  and $\epsilon_B$ is an
  isomorphism.
  Specializing this diagram to $A=B=j^*I$ we see that
  \eqref{eq:j^*-eta-alpha} implies our claim
  $\eta_I \circ \alpha = \id_{j_*j^*I}$. Hence $\eta_I$ is a
  split epimorphism.
\end{proof}

\begin{remark}
  \label{r:*-unit-split-epi-on-inj}
  The proof of Lemma~\ref{l:*-unit-split-epi-on-inj} can easily
  be adapted to show the following statement: Let
  $j \colon U \ra X$ be an open embedding and $I$ an injective
  object of $\Sh(X)$.  Then $I \ra j_*j^*I$ is a split
  epimorphism (it is well-known to be an epimorphism, see e.\,g.\
  the sentence containing equation (1.4.1.1) in \cite{BBD}).
  Slightly more generally, given
  an injective object $I$ of
  an abelian subcategory $\mathcal{X}$ of $\Sh(X)$ such that
  $j_!j^*I$ and $j_*j^*I$ are in $\mathcal{X}$, then
  $I \ra j_*j^*I$ is a split epimorphism.
\end{remark}

\begin{para}
  \label{para:Rcs-functors-e}
  Let $(X, \mathcal{S})$ be a
 stratified space and
  $e \colon (Z, \mathcal{S}_Z) \ra (X, \mathcal{S})$ a
  stratified locally closed embedding.
  Then the categories in \eqref{eq:*-adj-cons-1}
  and
  \eqref{eq:!-adj-cons-1} are Grothendicek abelian, the
  left adjoint functors $e^*$, $e_!$ are exact, and
  their right adjoints
  $e_*$, $e^!$ are left exact.
  If $e$ is an open embedding, then $e^!=e^*$ and
  $(e_!, e^!=e^*, e_*)$ is an adjoint triple.
  If $e$ is a closed embedding, then $e_!=e_*$ and
  $(e^*, e_*=e_!, e^!)$ is an adjoint triple.
  In general, $e_*$ and $e^!$ have
  right derived functors
  \begin{align*}
    \Rd_\cs e_* \colon \D^+(\Cons(Z, \mathcal{S}_Z))
    & \ra \D^+(\Cons(X, \mathcal{S})),\\
    \Rd_\cs e^! \colon \D^+(\Cons(X, \mathcal{S}))
    & \ra \D^+(\Cons(Z, \mathcal{S}_Z)).
  \end{align*}
  The index $\cs$ for \textit{constructible} is used
  in order to avoid confusion with the usual functors
  $\Rd e_*$ and
  $\Rd e^!$ appearing as right adjoints in
  \eqref{eq:*-adj-D-1}
  and \eqref{eq:!-adj-D-1}.
  Moreover, the adjunctions
  \eqref{eq:*-adj-cons-1}
  and
  \eqref{eq:!-adj-cons-1}
  induce adjunctions
  \begin{align}
    \label{eq:*-adj-cons-D}
    e^* \colon \D^+(\Cons(X, \mathcal{S}))
    & \rightleftarrows \D^+(\Cons(Z, \mathcal{S}_Z)) \colon
      \Rd_\cs e_*,\\
    \label{eq:!-adj-cons-D}
    e_! \colon \D^+(\Cons(Z, \mathcal{S}_Z))
    & \rightleftarrows \D^+(\Cons(X, \mathcal{S})) \colon
      \Rd_\cs e^!,
  \end{align}
\end{para}

\begin{lemma}
  \label{l:Rcs-composition-1}
  Let $(X, \mathcal{S})$ be a
  stratified space and let
  $e \colon (Z, \mathcal{S}_Z) \ra (X, \mathcal{S})$ and
  $d \colon (Y, \mathcal{S}_Y) \ra (Z, \mathcal{S}_Z)$ be
  stratified locally closed embeddings.
  Then the canonical 2-morphisms are 2-isomorphisms
  \begin{equation*}
    \Rd_\cs(e \circ d)_* \sira \Rd_\cs e_* \circ \Rd_\cs d_*
    \quad \text{and} \quad
    \Rd_\cs(e \circ d)^! \sira \Rd_\cs d^! \circ \Rd_\cs e^!.
  \end{equation*}
\end{lemma}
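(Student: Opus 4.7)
The plan is to deduce both 2-isomorphisms from Grothendieck's composition theorem for right derived functors, after verifying its hypotheses in the constructible setting. The key inputs are already in place: by Proposition~\ref{p:loc-sc-grothendieck-1} each of $\Cons(X,\mathcal{S})$, $\Cons(Z,\mathcal{S}_Z)$ and $\Cons(Y,\mathcal{S}_Y)$ is a Grothendieck abelian category, hence has enough injectives, so the right derived functors $\Rd_\cs e_*$, $\Rd_\cs d_*$, $\Rd_\cs(e\circ d)_*$ and their shriek analogs from \ref{para:Rcs-functors-e} are all defined on $\D^+$.

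For the first 2-isomorphism I would begin by noting that on the abelian level $(e\circ d)_*=e_*\circ d_*$, since both sides are right adjoint to $(e\circ d)^*=d^*\circ e^*$ via the restricted adjunctions \eqref{eq:*-adj-cons-1} (use also that $(e\circ d)^*=d^*\circ e^*$ already on the level of all sheaves, together with Lemma~\ref{t:preserve-cons-1}). By \ref{para:e^*-e_!-preserve} the pullback $d^*\colon\Cons(Z,\mathcal{S}_Z)\to\Cons(Y,\mathcal{S}_Y)$ is exact, so its right adjoint $d_*$ sends injective objects of $\Cons(Y,\mathcal{S}_Y)$ to injective objects of $\Cons(Z,\mathcal{S}_Z)$, which in particular are $e_*$-acyclic. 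Grothendieck's composition theorem then produces the canonical 2-isomorphism
\begin{equation*}
  \Rd_\cs(e\circ d)_*\sira \Rd_\cs e_*\circ \Rd_\cs d_*.
\end{equation*}

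The second 2-isomorphism is completely analogous. On the abelian level $(e\circ d)^!=d^!\circ e^!$, since both sides are right adjoint to $(e\circ d)_!=e_!\circ d_!$ via \eqref{eq:!-adj-cons-1} and \eqref{eq:composition-^*-_!}. The functor $e_!$ is exact (again by \ref{para:e^*-e_!-preserve}), so its right adjoint $e^!$ preserves injective objects of $\Cons(X,\mathcal{S})$, which are then automatically $d^!$-acyclic. Applying Grothendieck's composition theorem once more yields
\begin{equation*}
  \Rd_\cs(e\circ d)^!\sira \Rd_\cs d^!\circ \Rd_\cs e^!.
\end{equation*}

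There is no genuine obstacle here: everything reduces to the standard fact that the right adjoint of an exact functor between Grothendieck abelian categories preserves injectives, combined with the exactness of $d^*$ and $e_!$ at the constructible level established in \ref{para:e^*-e_!-preserve} and Lemma~\ref{t:preserve-cons-1}. The only point that deserves a line of care is the compatibility of the compositions $(e\circ d)_*\simeq e_*\circ d_*$ and $(e\circ d)^!\simeq d^!\circ e^!$ with their analogues on $\Sh(-)$, so that the ``canonical'' 2-morphism coming from the universal property of right derived functors is the one being claimed an isomorphism; this is immediate from uniqueness of right adjoints.
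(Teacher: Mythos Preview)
Your proof is correct and follows essentially the same approach as the paper's own argument: both rely on the fact that the right adjoints $d_*$ and $e^!$ preserve injective constructible sheaves because their left adjoints $d^*$ and $e_!$ are exact (Lemma~\ref{t:preserve-cons-1}), and then invoke Grothendieck's composition theorem together with $(e\circ d)_*=e_*\circ d_*$ and $(e\circ d)^!\cong d^!\circ e^!$. Your write-up is simply more detailed than the paper's two-sentence proof.
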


\begin{proof}
  Both $e_*$ and $e^!$ preserve injective constructible sheaves
  because they have exact left adjoints
  (Lemma~\ref{t:preserve-cons-1}).
  Therefore the statement follows from the usual isomorphisms
  $(e \circ d)_* \sira e_* \circ d_*$ (even equality)
  and
  $(e \circ d)^! \sira d^! \circ e^!$.
\end{proof}

\begin{para}
  \label{para:sigma-and-tau}
  Let $(X, \mathcal{S})$ be a
  stratified
  space and
  $e \colon (Z, \mathcal{S}_Z) \ra (X, \mathcal{S})$ a
  stratified locally closed embedding.
  Then there is a canonical 2-morphism
  \begin{equation}
    \label{eq:real-e_*-sigma}
    \sigma \colon \real \circ \Rd_\cs e_* \Ra \Rd e_* \circ \real
  \end{equation}
  between functors from $\D^+(\Cons(Z, \mathcal{S}_Z))$ to
  $\D^+(X)$, as illustrated by the diagram
  \begin{equation*}
    \xymatrix{
      {\D^+(\Cons(Z, \mathcal{S}_Z))}
      \ar[r]^-{\Rd_\cs e_*} \ar[d]_-{\real} &
      {\D^+(\Cons(X, \mathcal{S}))}
      \ar[d]^-{\real}
      \ar@{=>}[ld]_-{\sigma}\\
      {\D^+(Z)}
      \ar[r]^-{\Rd e_*} &
      {\D^+(X).}
    }
  \end{equation*}
  The dependency of $\sigma$ on $e$ is suppressed in the notation.
  The construction of $\sigma$ is straightforward
  when one remembers that the right
  derived functor $\Rd_\cs e_*$ comes together with a 2-morphism
  and this pair satisfies a universal property (see e.\,g.\
  \cite[Def.~1.8.1]{KS}).
  Let us explain how $\sigma_A$ is computed, for
  $A \in \D^+(\Cons(Z, \mathcal{S}_Z))$. Let $A \ra I$ be a
  quasi-isomorphism in $\K^+(\Cons(Z, \mathcal{S}_Z))$ where $I$
  is a bounded below complex of injective objects of $\Cons(Z,
  \mathcal{S}_Z)$. Then
  \begin{equation*}
    \Rd_\cs e_* A \sira
    \Rd_\cs e_* I \sila e_*I
  \end{equation*}
  in $\D^+(\Cons(X, \mathcal{S}))$ and then also in
  in $\D^+(X)$.
  Let $A \ra J$ be a
  quasi-isomorphism in $\K^+(Z)$ where $J$
  is a bounded below complex of injective objects of $\Sh(Z)$.
  Then
  \begin{equation*}
    \Rd e_* A \sira
    \Rd e_* J \sila e_*J
  \end{equation*}
  in $\D^+(X)$. Now there is a unique morphism $\kappa \colon I
  \ra J$ in
  $K^+(Z)$ such that the composition $A \ra I \xra{\kappa} J$
  is the quasi-isomorphism $A \ra J$. Clearly $\kappa$ is a
  quasi-isomorphism. Then the morphism
  \begin{equation}
    \label{eq:sigma-A}
    \sigma_A \colon
    \real(\Rd_\cs e_*(A)) \ra
    \Rd e_*(\real(A))
  \end{equation}
  corresponds, via the above isomorphisms, to the morphism
  \begin{equation}
    \label{eq:sigma-A-via-IJ}
    e_*(\kappa) \colon e_*I \ra e_*J.
  \end{equation}
  In particular, if $e$ is a closed embedding, then $\sigma$ is a
  2-isomorphism.
  For general $e$, this is sometimes the case, see
    Theorem~\ref{t:equivalent-conditions-for-equivalence}
  and
  Proposition~\ref{p:sigma-iso-on-strata}.

  Similarly, there is a canonical 2-morphism
  \begin{equation}
    \label{eq:real-e^!-tau}
    \tau \colon \real \circ \Rd_\cs e^! \Ra \Rd e^! \circ \real
  \end{equation}
  between functors from $\D^+(\Cons(X, \mathcal{S}))$ to
  $\D^+(Z)$, as illustrated by the diagram
  \begin{equation*}
    \xymatrix{
      {\D^+(\Cons(X, \mathcal{S}))}
      \ar[r]^-{\Rd_\cs e^!} \ar[d]_-{\real} &
      {\D^+(\Cons(Z, \mathcal{S}_Z))}
      \ar[d]^-{\real} \ar@{=>}[ld]_-{\tau}\\
      {\D^+(X)}
      \ar[r]^-{\Rd e^!} &
      {\D^+(Z).}
    }
  \end{equation*}
  The dependency of $\tau$ on $e$ is suppressed in the notation.
  Given $B \in \D^+(\Cons(X, \mathcal{S}))$
  and quasi-isomorphisms $B \ra I' \xra{\kappa'} J'$ with
  bounded below complexes $I'$ and $J'$ of injective objects of
  $\Cons(X, \mathcal{S})$ and $\Sh(X)$, respectively,
  \begin{equation}
    \label{eq:tau-B}
    \tau_B \colon
    \real(\Rd_\cs e^!(B) \ra \Rd e^!(\real(B))
  \end{equation}
  corresponds, via the obvious isomorphisms, to the morphism
  \begin{equation}
    \label{eq:tau-B-via-I'J'}
    e^!(\kappa') \colon e^!I' \ra e^!J'.
  \end{equation}
  In particular, if $e$ is an open embedding, then $\tau$ is a
  2-isomorphism.
  For general $e$, this is sometimes the case, see
  Theorem~\ref{t:equivalent-conditions-for-equivalence}
  and
  Proposition~\ref{p:sigma-iso-on-strata}.
\end{para}


\begin{lemma}[{Compatibility of unit and counit of
    derived $*$-adjunction with $\real$}]
  \label{l:comp-e^*-e_*}
  Let $(X, \mathcal{S})$ be a
  \ref{enum:loc-sc}-\ref{enum:l_*-preserves-cons}-stratified
  space and
  $e \colon (Z, \mathcal{S}_Z) \ra (X, \mathcal{S})$ a
  stratified locally closed embedding (as in
  \ref{para:sigma-and-tau}).
  Then the diagrams
  \begin{equation*}
    \xymatrix@C=2.5pc{
      {\real(A)} \ar[r]^-{\real(\eta_{\cs,A})} \ar[dd]_-{\id}^-{=} &
      {\real(\Rd_\cs e_*(e^*(A)))}
      \ar[d]_-{\sigma_{e^*A}}^-{\eqref{eq:real-e_*-sigma}}\\
      &
      {\Rd e_*(\real(e^*(A)))}
      \\
      {\real(A)} \ar[r]^-{\eta_{\real(A)}} &
      {\Rd e_*(e^* (\real(A))),}
      \ar[u]^(0.6){\id}_(0.6){=}_(0.4){\eqref{eq:real-e^*}}
    }
    \quad
    \xymatrix@C=2.5pc{
      {\real(e^*(\Rd_\cs e_*(B)))}
      \ar[r]^-{\real(\epsilon_{\cs,B})}_-{\sim}
      &
      {\real(B)} \ar[dd]_-{\id}^-{=}
      \\
      {e^*(\real(\Rd_\cs e_*(B)))}
      \ar[u]^(0.6){\id}_(0.6){=}_(0.4){\eqref{eq:real-e^*}}
      \ar[d]_-{e^*(\sigma_{B})}^-{\eqref{eq:real-e_*-sigma}}
      \\
      {e^*(\Rd e_*(\real(B)))}
      \ar[r]^-{\epsilon_{\real(B)}}_-{\sim}
      &
      {\real(B)}
    }
  \end{equation*}
  commute for all $A \in \D^+(\Cons(X, \mathcal{S}))$
  and $B \in \D^+(\Cons(Z, \mathcal{S}_Z))$ (and are natural in $A$
  and $B$), where $\eta_\cs$ and
  $\epsilon_\cs$ denote unit and counit of the adjunction
  $(e^*, \Rd_\cs e_*)$ and
  $\eta$ and
  $\epsilon$ denote unit and counit of the adjunction
  $(e^*, \Rd e_*)$.
\end{lemma}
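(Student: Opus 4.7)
The plan is to unpack both diagrams using the explicit description from \ref{para:sigma-and-tau} of the $2$-morphism $\sigma$ in terms of injective resolutions, and then to verify commutativity by a direct diagram chase. Conceptually, $\sigma$ is the mate of the identity $2$-isomorphism $e^* \circ \real = \real \circ e^*$ from \eqref{eq:real-e^*} under the two adjunctions $(e^*, \Rd_\cs e_*)$ and $(e^*, \Rd e_*)$, and the two diagrams in the statement are exactly the two ``zig-zag'' identities one expects of such a mate with respect to the units and counits. So in principle the lemma is formal; the work is just to translate ``mate'' into the ad hoc construction of $\sigma$ given in \ref{para:sigma-and-tau}.

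For the first diagram I would proceed as follows. Given $A \in \D^+(\Cons(X, \mathcal{S}))$, choose a quasi-isomorphism $A \to P$ with $P$ a bounded below complex of injectives in $\Cons(X, \mathcal{S})$, and set $Q := e^*(P)$, which lies in $\C^+(\Cons(Z, \mathcal{S}_Z))$ by Lemma~\ref{t:preserve-cons-1}. Now choose a quasi-isomorphism $Q \to I$ into a bounded below complex of injectives in $\Cons(Z, \mathcal{S}_Z)$, a quasi-isomorphism $Q \to J$ into a bounded below complex of injectives in $\Sh(Z)$, and the unique lift $\kappa \colon I \to J$ in $\K^+(Z)$. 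According to \ref{para:sigma-and-tau}, $\real(\Rd_\cs e_*(e^*A))$ is represented by $e_*(I)$, the object $\Rd e_*(e^*(\real A))$ is represented by $e_*(J)$, and $\sigma_{e^*A}$ is represented by $e_*(\kappa)$. Both paths of the diagram then reduce to $e_*$ applied to a composition starting from the abelian adjunction unit $P \to e_*(Q)$ and continuing along the chosen resolutions; they agree because $Q \to I \xrightarrow{\kappa} J$ equals $Q \to J$ by the defining property of $\kappa$, and by functoriality of $e_*$.

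For the second diagram, given $B \in \D^+(\Cons(Z, \mathcal{S}_Z))$, I would choose a quasi-isomorphism $B \to I$ into a bounded below complex of injectives in $\Cons(Z, \mathcal{S}_Z)$, a quasi-isomorphism $B \to J$ into a bounded below complex of injectives in $\Sh(Z)$, and the unique $\kappa \colon I \to J$. Both counits $\epsilon_{\cs, B}$ and $\epsilon_{\real(B)}$ are genuine isomorphisms because the underlying abelian counit $e^* e_* \to \id$ is an isomorphism for any locally closed embedding. The two paths in the diagram collapse to $e^*(e_*(I)) \to I \xrightarrow{\kappa} J$ on one side and $e^*(e_*(I)) \xrightarrow{e^* e_*(\kappa)} e^*(e_*(J)) \to J$ on the other, and these coincide by naturality of the abelian counit $e^* e_* \to \id$ applied to the morphism $\kappa$.

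I expect the main obstacle to be bookkeeping: making sure that the various identifications of $\real(\Rd_\cs e_*(-))$ with $e_*$ applied to an injective resolution, together with the strict equality \eqref{eq:real-e^*}, are chosen compatibly, so that one is genuinely comparing the two explicit composites on the nose in $\K^+(Z)$ rather than merely up to a further isomorphism. No essentially new ingredient is needed beyond the explicit formulas in \ref{para:sigma-and-tau}, the exactness of $e^*$ at the abelian level, and the standard description of units and counits of derived adjunctions as the images of their abelian counterparts under injective resolution.
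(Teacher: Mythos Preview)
Your proposal is correct and follows essentially the same route as the paper: unwind the explicit description of $\sigma$ from \ref{para:sigma-and-tau} in terms of injective resolutions $e^*A \to I \xra{\kappa} J$ (resp.\ $B \to I \xra{\kappa} J$) and observe that both paths in each diagram become the same composite after applying $e_*$ (resp.\ naturality of the abelian counit $e^*e_* \to \id$ with respect to $\kappa$). The only difference is cosmetic: the paper does not bother to first resolve $A$ by injectives in $\Cons(X,\mathcal{S})$ but works directly with $e^*A$, and your framing of $\sigma$ as a mate, while correct and illuminating, is not invoked in the paper's argument.
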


\begin{proof}
  Let $e^*A \xra{\alpha} I \xra{\kappa} J$ be quasi-isomorphisms
  where $I$ and $J$ are
  bounded below complexes of injective objects of
  $\Cons(Z, \mathcal{S}_Z)$ and $\Sh(Z)$, respectively.
  Then the square
  \begin{equation*}
    \xymatrix{
      {A} \ar[r] \ar[d]_-{\id}^-{=} &
      {e_*e^*A} \ar[r]^-{e_*(\alpha)} \ar[d]_-{\id}^-{=} &
      {e_*I} \ar[d]^-{e_*(\kappa)}\\
      {A} \ar[r] &
      {e_*e^*A} \ar[r]^-{e_*(\kappa \circ \alpha)} &
      {e_*J}\\
    }
  \end{equation*}
  is commutative and implies that the first diagram in the lemma
  is commutative, by the explicit description
  of $\sigma$ in
  \ref{para:sigma-and-tau} (equation 
  \eqref{eq:sigma-A-via-IJ}).

  Let $B \ra I' \xra{\kappa'} J'$
  be quasi-isomorphisms where $I$ and $J$ are
  bounded below complexes of injective objects of
  $\Cons(Z, \mathcal{S}_Z)$ and $\Sh(Z)$, respectively.
  Then commutativity of the square
  \begin{equation*}
    \xymatrix{
      {e^*e_*I} \ar[r] \ar[d]_-{e^*(e_*(\kappa'))} &
      {I} \ar[d]_-{\kappa'}\\
      {e^*e_*J} \ar[r] &
      {J}
    }
  \end{equation*}
  shows that the second diagram in the lemma is commutative,
  again
  by the explicit description
  of $\sigma$ in
  \ref{para:sigma-and-tau}.
\end{proof}

\begin{lemma}
  [{Compatibility of $*$-adjunction bijections with $\real$}]
  \label{l:*-adj-bijections-real}
  Let $(X, \mathcal{S})$ be a
  \ref{enum:loc-sc}-\ref{enum:l_*-preserves-cons}-stratified space and
  $e \colon (Z, \mathcal{S}_Z) \ra (X, \mathcal{S})$ a
  stratified locally closed embedding (as in
  \ref{para:sigma-and-tau}).
  Let $A \in \D^+(\Cons(X, \mathcal{S}))$
  and $B \in \D^+(\Cons(Z, \mathcal{S}_Z))$. Then the
  diagram
  \begin{equation}
    \label{eq:*-adj-bijections-real}
    \xymatrix{
      {\Hom_{\D(\Cons(Z,\mathcal{S}_Z))}(e^*A, B)}
      \ar[r]^-{\sim}
      \ar[d]^-{\real} &
      {\Hom_{\D(\Cons(X,\mathcal{S}))}(A, \Rd_\cs e_*(B))}
      \ar[d]^-{\real}\\
      {\Hom_{\D(Z)}(\real(e^*A), \real(B))}
      \ar[d]_(0.4){\id}^(0.4){=}^(0.6){\eqref{eq:real-e^*}}
      &
      {\Hom_{\D(X)}(\real(A), \real(\Rd_\cs e_*(B)))}
      \ar[d]_-{? \circ \sigma_B}^-{\eqref{eq:real-e_*-sigma}}
      \\
      {\Hom_{\D(Z)}(e^*(\real(A)), \real(B))}
      \ar[r]^-{\sim} &
      {\Hom_{\D(X)}(\real(A), \Rd e_*(\real(B)))}
    }
  \end{equation}
  is commutative, where the horizontal
  arrows are the adjunction bijections.
\end{lemma}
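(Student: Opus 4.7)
The plan is to chase an element $f \colon e^*A \to B$ around the diagram and use the two tools we have just established: the naturality of the 2-morphism $\sigma$ in its argument, and the first commutative square of Lemma~\ref{l:comp-e^*-e_*} expressing compatibility of the unit with $\real$.

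Recall that under the adjunction $(e^*, \Rd_\cs e_*)$, the morphism $f \colon e^*A \to B$ corresponds to $\Rd_\cs e_*(f) \circ \eta_{\cs,A} \colon A \to \Rd_\cs e_*(B)$. Applying $\real$ and then composing with $\sigma_B$ (the right vertical of \eqref{eq:*-adj-bijections-real}) produces
\begin{equation*}
  \sigma_B \circ \real(\Rd_\cs e_*(f)) \circ \real(\eta_{\cs,A}) \colon \real(A) \to \Rd e_*(\real(B)).
\end{equation*}
On the other hand, going down first and then right sends $f$ to $\real(f) \colon e^*(\real A) \to \real(B)$ (using the strict identification $\real \circ e^* = e^* \circ \real$ from \eqref{eq:real-e^*}), and then under the adjunction $(e^*, \Rd e_*)$ to $\Rd e_*(\real(f)) \circ \eta_{\real(A)}$.

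The key step is then to observe that naturality of $\sigma$ applied to the morphism $f \colon e^*A \to B$ gives the commutative square
\begin{equation*}
  \xymatrix@C=3.5pc{
    {\real(\Rd_\cs e_*(e^*A))} \ar[r]^-{\real(\Rd_\cs e_*(f))} \ar[d]_-{\sigma_{e^*A}} &
    {\real(\Rd_\cs e_*(B))} \ar[d]^-{\sigma_B}\\
    {\Rd e_*(\real(e^*A))} \ar[r]^-{\Rd e_*(\real(f))} &
    {\Rd e_*(\real(B)),}
  }
\end{equation*}
so that $\sigma_B \circ \real(\Rd_\cs e_*(f)) = \Rd e_*(\real(f)) \circ \sigma_{e^*A}$. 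Substituting into the expression above and then applying the first square of Lemma~\ref{l:comp-e^*-e_*}, which precisely says $\sigma_{e^*A} \circ \real(\eta_{\cs,A}) = \eta_{\real(A)}$ (again under the identification \eqref{eq:real-e^*}), yields
\begin{equation*}
  \sigma_B \circ \real(\Rd_\cs e_*(f)) \circ \real(\eta_{\cs,A}) = \Rd e_*(\real(f)) \circ \eta_{\real(A)},
\end{equation*}
which is exactly the morphism produced by the other route. This proves commutativity of \eqref{eq:*-adj-bijections-real}.

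There is no real obstacle: the lemma is a formal consequence of the naturality of $\sigma$ together with Lemma~\ref{l:comp-e^*-e_*}. The only point requiring minor care is to keep track of the strict equality \eqref{eq:real-e^*} of $\real \circ e^*$ with $e^* \circ \real$ so that the source and target of $\real(\eta_{\cs,A})$ and $\eta_{\real(A)}$ actually match up; this is precisely what is encoded by the left vertical arrow of \eqref{eq:*-adj-bijections-real} being labeled by $\id$.
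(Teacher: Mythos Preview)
Your proof is correct and follows essentially the same approach as the paper: chase a morphism $f \colon e^*A \to B$ around the diagram, compute the two routes as $\sigma_B \circ \real(\Rd_\cs e_*(f)) \circ \real(\eta_{\cs,A})$ and $\Rd e_*(\real(f)) \circ \eta_{\real(A)}$, and conclude using the naturality of $\sigma$ together with the first square of Lemma~\ref{l:comp-e^*-e_*}.
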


\begin{proof}
  Let $f \colon e^*A \ra B$ be a morphism in
  $\D^+(\Cons(Z,\mathcal{S}_Z))$.
  Its image in the bottom right corner of our diagram
  via the path through the bottom left corner is the composition
  \begin{equation*}
    \real(A)
    \xra{\eta_{\real(A)}}
    \Rd e_*(e^*(\real(A)))
    \xra{\id}
    \Rd e_*(\real(e^*A))
    \xra{\Rd e_*(\real(f))}
    \Rd e_*(\real(B)),
  \end{equation*}
  and
  its image via the path through the top right corner is the
  composition
  \begin{equation*}
    \real(A)
    \xra{\real(\eta_{\cs,A})}
    \real(\Rd_\cs e_*(e^*A))
    \xra{\real(\Rd_\cs e_*(f))}
    \real(\Rd_\cs e_*(B))
    \xra{\sigma_B}
    \Rd e_*(\real(B)),
  \end{equation*}
  where $\eta$ and $\eta_\cs$ are the adjunction units.
  But these two compositions coincide by
  Lemma~\ref{l:comp-e^*-e_*}
  and the equality
  $\sigma_B \circ \real(\Rd_\cs e_*(f))
  =\Rd e_*(\real(f)) \circ \sigma_{e^*A}$ which holds since
  $\sigma$ is a 2-morphism.
\end{proof}

\begin{lemma}[{Compatibility of unit and counit of
    derived $!$-adjunction} with $\real$]
  \label{l:comp-e_!-e^!}
  Let $(X, \mathcal{S})$ be a
  \ref{enum:loc-sc}-\ref{enum:l_*-preserves-cons}-stratified
  space and
  $e \colon (Z, \mathcal{S}_Z) \ra (X, \mathcal{S})$ a
  stratified locally closed embedding (as in
  \ref{para:sigma-and-tau}).
  Then the diagrams
  \begin{equation*}
    \xymatrix@C=2.5pc{
      {\real(A)} \ar[r]^-{\real(\zeta_{\cs,A})}_-{\sim}
      \ar[dd]_-{\id}^-{=} &
      {\real(\Rd_\cs e^!(e_!(A)))}
      \ar[d]_-{\tau_{e_!A}}^-{\eqref{eq:real-e^!-tau}}\\
      &
      {\Rd e^!(\real(e_!(A)))}
      \\
      {\real(A)} \ar[r]^-{\zeta_{\real(A)}}_-{\sim} &
      {\Rd e^!(e_! (\real(A))),}
      \ar[u]^(0.6){\id}_(0.6){=}_(0.4){\eqref{eq:real-e_!}}
    }
    \quad
    \xymatrix@C=2.5pc{
      {\real(e_!(\Rd_\cs e^!(B)))}
      \ar[r]^-{\real(\delta_{\cs,B})}
      &
      {\real(B)} \ar[dd]_-{\id}^-{=}
      \\
      {e_!(\real(\Rd_\cs e^!(B)))}
      \ar[u]^(0.6){\id}_(0.6){=}_(0.4){\eqref{eq:real-e_!}}
      \ar[d]_-{e_!(\tau_{B})}^-{\eqref{eq:real-e^!-tau}}
      \\
      {e_!(\Rd e^!(\real(B)))}
      \ar[r]^-{\delta_{\real(B)}}
      &
      {\real(B)}
    }
  \end{equation*}
  commute for all
  $A \in \D^+(\Cons(Z, \mathcal{S}_Z))$
  and $B \in \D^+(\Cons(X, \mathcal{S}))$
  (and are natural in $A$
  and $B$), where $\zeta_\cs$ and
  $\delta_\cs$ denote unit and counit of the adjunction
  $(e_!, \Rd_\cs e^!)$ and
  $\zeta$ and
  $\delta$ denote unit and counit of the adjunction
  $(e_!, \Rd e^!)$.
\end{lemma}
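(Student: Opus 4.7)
The plan is to mimic the proof of Lemma~\ref{l:comp-e^*-e_*}, but with the roles of $*$ and $!$ swapped. The key input is the explicit description of $\tau$ given in~\ref{para:sigma-and-tau} (formula \eqref{eq:tau-B-via-I'J'}): for $C \in \D^+(\Cons(X,\mathcal{S}))$, one chooses quasi-isomorphisms $C \to I' \xra{\kappa'} J'$ where $I'$ is a bounded below complex of injectives in $\Cons(X,\mathcal{S})$ and $J'$ in $\Sh(X)$, and then $\tau_C$ corresponds to $e^!(\kappa') \colon e^!(I') \to e^!(J')$. Such resolutions exist because $\Cons(X,\mathcal{S})$ is Grothendieck abelian (Proposition~\ref{p:loc-sc-grothendieck}) and $\Sh(X)$ has enough injectives.

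For the first diagram, I would fix $A \in \D^+(\Cons(Z,\mathcal{S}_Z))$, set $C := e_!(A)$, and choose resolutions $e_!(A) \to I' \xra{\kappa'} J'$ as above. The derived unit $\zeta_{\cs,A} \colon A \to \Rd_\cs e^!(e_!A) \simeq e^!(I')$ is represented by the composition of the abelian unit $A \to e^!e_!(A)$ with $e^!$ applied to the resolution $e_!(A) \to I'$; similarly $\zeta_{\real(A)}$ is represented using $e_!(\real(A)) = \real(e_!(A)) \to J'$, the abelian unit, and the fact that $e^! = e^!$ at the abelian level. Commutativity of the first diagram then reduces to commutativity of the square
\[
\xymatrix{
e_!(A) \ar[r] \ar[d]_-{\id} & I' \ar[d]^-{\kappa'} \\
e_!(A) \ar[r] & J'
}
\]
which holds by the defining property of $\kappa'$ (exactly as in the first part of the proof of Lemma~\ref{l:comp-e^*-e_*}, with the square involving $e_*(\kappa)$ replaced by this one).

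For the second diagram, I would similarly fix $B \in \D^+(\Cons(X,\mathcal{S}))$ and resolutions $B \to I' \xra{\kappa'} J'$. The derived counits $\delta_{\cs,B}$ and $\delta_{\real(B)}$ are represented, via the usual isomorphisms in $\D^+(\Cons(X,\mathcal{S}))$ and $\D^+(X)$ respectively, by the abelian counits $e_!e^!(I') \to I'$ and $e_!e^!(J') \to J'$. The claim then reduces to commutativity of
\[
\xymatrix{
e_!e^!(I') \ar[r] \ar[d]_-{e_!e^!(\kappa')} & I' \ar[d]^-{\kappa'} \\
e_!e^!(J') \ar[r] & J'
}
\]
which is naturality of the abelian counit $\delta \colon e_!e^! \Ra \id$ applied to $\kappa' \colon I' \to J'$.

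The main technical obstacle I foresee is the bookkeeping of the identifications \eqref{eq:real-e_!} (which are taken to be identities) together with the switch between the two adjunction triples: one must check that the ``abelian'' units and counits entering the explicit representatives really are the ones pulling back to $\zeta,\delta$ at the derived level once one passes through the quasi-isomorphisms $e_!(A) \to I'$ (respectively $B \to I'$) and then to $J'$. Once this is set up carefully, the proof itself is just the commutativity of the two displayed squares, as in Lemma~\ref{l:comp-e^*-e_*}.
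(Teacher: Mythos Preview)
Your proposal is correct and follows exactly the same approach as the paper: the paper's proof consists of the single sentence ``Formally the same as that of Lemma~\ref{l:comp-e^*-e_*}; use $(e_!,e^!)$ instead of $(e^*,e_*)$ and the explicit description of $\tau$ in~\ref{para:sigma-and-tau} (equation~\eqref{eq:tau-B-via-I'J'}),'' and you have simply unpacked what this means, arriving at precisely the two naturality squares one would expect. The bookkeeping you flag is real but routine, and is exactly what the paper suppresses by the phrase ``formally the same.''
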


\begin{proof}
  Formally the same as that of Lemma~\ref{l:comp-e^*-e_*};
  use
  $(e_!,e^!)$ instead of $(e^*,e_*)$ and
  the explicit description
  of $\tau$ in
  \ref{para:sigma-and-tau} (equation 
  \eqref{eq:tau-B-via-I'J'}).
\end{proof}

The next lemma is strictly speaking not needed for us.

\begin{lemma}
  [{Compatibility of $!$-adjunction bijections with $\real$}]
  \label{l:!-adj-bijections-real}
    Let $(X, \mathcal{S})$ be a
  \ref{enum:loc-sc}-\ref{enum:l_*-preserves-cons}-stratified space and
  $e \colon (Z, \mathcal{S}_Z) \ra (X, \mathcal{S})$ a
  stratified locally closed embedding (as in
  \ref{para:sigma-and-tau}).
  Let $A \in \D^+(\Cons(Z, \mathcal{S}_Z))$
  and $B \in \D^+(\Cons(X, \mathcal{S}))$. Then the
  diagram
  \begin{equation}
    \label{eq:!-adj-bijections-real}
    \xymatrix{
      {\Hom_{\D(\Cons(X,\mathcal{S}))}(e_!A, B)}
      \ar[r]^-{\sim}
      \ar[d]^-{\real} &
      {\Hom_{\D(\Cons(Z,\mathcal{S}_Z))}(A, \Rd_\cs e^!(B))}
      \ar[d]^-{\real}\\
      {\Hom_{\D(X)}(\real(e_!A), \real(B))}
      \ar[d]_(0.4){\id}^(0.4){=}^(0.6){\eqref{eq:real-e_!}}
      &
      {\Hom_{\D(Z)}(\real(A), \real(\Rd_\cs e^!(B)))}
      \ar[d]_-{\tau_B \circ ?}^-{\eqref{eq:real-e^!-tau}}
      \\
      {\Hom_{\D(X)}(e_!(\real(A)), \real(B))}
      \ar[r]^-{\sim} &
      {\Hom_{\D(Z)}(\real(A), \Rd e^!(\real(B)))}
    }
  \end{equation}
  is commutative, where the horizontal
  arrows are the adjunction bijections.
\end{lemma}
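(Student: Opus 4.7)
The plan is to argue exactly as in the proof of Lemma~\ref{l:*-adj-bijections-real}, swapping the adjunction $(e^*, \Rd_\cs e_*)$ for $(e_!, \Rd_\cs e^!)$ and appealing to Lemma~\ref{l:comp-e_!-e^!} in place of Lemma~\ref{l:comp-e^*-e_*}. Fix a morphism $f \colon e_! A \ra B$ in $\D^+(\Cons(X, \mathcal{S}))$ and chase it through both routes of the diagram to an element of $\Hom_{\D(Z)}(\real(A), \Rd e^!(\real(B)))$.

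Along the top and right, the upper horizontal bijection sends $f$ to $\Rd_\cs e^!(f) \circ \zeta_{\cs,A}$; applying $\real$ and post-composing with $\tau_B$ produces
\begin{equation*}
  \real A \xra{\real(\zeta_{\cs,A})} \real(\Rd_\cs e^!(e_!A))
  \xra{\real(\Rd_\cs e^!(f))} \real(\Rd_\cs e^!(B))
  \xra{\tau_B} \Rd e^!(\real B).
\end{equation*}
Along the left and bottom, $\real(f)$ is identified via \eqref{eq:real-e_!} with a morphism $e_!(\real A) \ra \real B$, and the lower adjunction bijection gives
\begin{equation*}
  \real A \xra{\zeta_{\real A}} \Rd e^!(e_!(\real A))
  \xra{\Rd e^!(\real(f))} \Rd e^!(\real B).
\end{equation*}

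The key step is to use that $\tau$ is a 2-morphism, hence natural, to obtain $\tau_B \circ \real(\Rd_\cs e^!(f)) = \Rd e^!(\real(f)) \circ \tau_{e_! A}$, and then to invoke the first square of Lemma~\ref{l:comp-e_!-e^!}, which gives $\tau_{e_!A} \circ \real(\zeta_{\cs,A}) = \zeta_{\real A}$ (modulo the identification \eqref{eq:real-e_!}). Concatenating these equalities converts the top-right composition into the bottom-left composition, proving commutativity.

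I do not expect any genuine obstacle: once Lemma~\ref{l:comp-e_!-e^!} has been established, the argument is a purely formal diagram chase, structurally identical to that of Lemma~\ref{l:*-adj-bijections-real}. Conceptually, the content of the lemma is simply that $\tau$ together with the identity 2-morphism of \eqref{eq:real-e_!} assemble into a morphism of adjunctions from $(e_!, \Rd_\cs e^!)$ to $(e_!, \Rd e^!)$ along $\real$, and compatibility of the two adjunction bijections then follows automatically.
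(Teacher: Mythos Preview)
Your proposal is correct and matches the paper's own proof exactly: the paper simply states that this lemma is proven as Lemma~\ref{l:*-adj-bijections-real} using Lemma~\ref{l:comp-e_!-e^!}, and you have spelled out precisely that argument.
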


\begin{proof}
  This is proven as Lemma~\ref{l:*-adj-bijections-real}
  using
  Lemma~\ref{l:comp-e_!-e^!}.
\end{proof}

\begin{para}
  \label{para:open-closed-Sh}
  The following material is essentially \cite[1.4.1]{BBD}. We included it for a reference in \ref{para:open-closed-Cons}.
  Let $X$ be a topological space with an open subspace $U$ and
  its closed complement $F$. Let $F \xra{i} X \xla{j} U$ be the
  inclusions. Then we have the following six functors
  \begin{equation}
    \label{eq:recollement-Sh}
    \xymatrix@=45pt{
      {\Sh(F)} \ar[r]|{i_*=i_!} &
      \ar@/_1pc/[l]_{i^*} \ar@/^1pc/[l]^{i^!}
      {\Sh(X)}
      \ar[r]|{j^!=j^*} &
      \ar@/_1pc/[l]_{j_!} \ar@/^1pc/[l]^{j_*}
      {\Sh(U)}
    }
  \end{equation}
  where each functor is left adjoint to the functor directly
  below it, i.\,e.\ there are four adjunctions $(i^*, i_*)$,
  $(i_!,i^!)$, $(j^*, j_*)$, $(j_!, j^!)$,
  the four ``upper'' functors $i^*$, $i_*=i_!$, $j_!$, $j^!=j^*$
  are exact, and the right adjoints $i^!$ and $j_*$ are left
  exact.
  The horizontal composition of the two top (resp.\
  middle resp.\ bottom) functors is zero, i.\,e.\ $i^*j_!=0$,
  $j^*i_*=0$, $i^!j_*=0$.
  The functors $i_*=i_!$, $j_!$ and $j_*$ are fully faithful;
  this means that the four adjunction morphisms
  $i^*i_* \sira \id \sira i^!i_!$,
  $j^* j_* \sira \id \sira j^! j_!$ are isomorphisms.
  Moreover, the adjunction morphisms fit into exact sequences
  \begin{align*}
    0 & \ra j_!j^! A=j_!j^*A \ra A \ra i_* i^*A \ra 0,\\
    0 & \ra i_! i^! A=i_*i^!A \ra A \ra j_*j^* A,
  \end{align*}
  for each $A$ in $\Sh(X)$.
   The last morphism in the second
  sequence is a (split) epimorphism if $A$ is injective (or flabby) in
  $\Sh(X)$.
\end{para}

\begin{para}
  \label{para:open-closed-D-Sh}
  Keep the setting of \ref{para:open-closed-Sh}. Then
  \eqref{eq:recollement-Sh} induces a similar diagram
  \begin{equation}
    \label{eq:recollement-D-Sh}
    \xymatrix@=45pt{
      {\D^+(F)} \ar[r]|{i_*=i_!} &
      \ar@/_1pc/[l]_{i^*} \ar@/^1pc/[l]^{\Rd i^!}
      {\D^+(X)}
      \ar[r]|{j^!=j^*} &
      \ar@/_1pc/[l]_{j_!} \ar@/^1pc/[l]^{\Rd j_*}
      {\D^+(U),}
    }
  \end{equation}
  of derived
  categories and functors,
  where $i^*$, $i_*=i_!$, $j_!$, $j^!=j^*$ are trivially induced
  from the corresponding exact functors and $\Rd i^!$, $\Rd j_*$
  are the right derived functors of the left exact functors $i^!$
  and $j_*$. There are induced adjunctions
  $(i^*, i_*)$,
  $(i_!,\Rd i^!)$, $(j^*, \Rd j_*)$, $(j_!, j^!)$.
  The horizontal composition of the two top (resp.\
  middle resp.\ bottom) functors is zero, i.\,e.\ $i^*j_!=0$,
  $j^*i_*=0$, $\Rd i^! \Rd j_*=0$ (for the last equality use that
  $j_* \colon \Sh(U) \ra \Sh(X)$ preserves injectives because its
  left adjoint $j^*$ is exact).
  The functors $i_*=i_!$, $j_!$ and $\Rd j_*$ are fully faithful
  because the four adjunction morphisms
  $i^*i_* \sira \id \sira (\Rd i^!) i_!$,
  $j^* (\Rd j_*) \sira \id \sira j^! j_!$ are isomorphisms (note
  that $i_! \colon \Sh(F) \ra \Sh(X)$ preserves injectives
  because its left adjoint $i^*$ is exact).
  Moreover, the adjunction morphisms can be completed by unique
  morphisms $d$ and $d'$ into triangles
  \begin{align}
    \label{eq:ji-standard-triangle-Sh}
    j_!j^* A & \ra A \ra i_* i^*A \xra{d} [1] j_!j^* A,\\
    \label{eq:ij-standard-triangle-Sh}
    i_* (\Rd i^! A) & \ra A \ra \Rd j_*(j^* A)
    \xra{d'} [1] i_*(\Rd i^! A),
  \end{align}
  called \textit{standard triangles},
  for each $A$ in $\D^+(X)$ (see \cite[(1.4.3.4)]{BBD}).
\end{para}

\begin{para}
  \label{para:open-closed-Cons}
  Let $(X, \mathcal{S})$ be a
  \ref{enum:loc-sc}-\ref{enum:l_*-preserves-cons}-stratified space,
  $j \colon (U, \mathcal{S}_U) \ra (X, \mathcal{S})$ a stratified
  open embedding and $i \colon
  (F, \mathcal{S}_F) \ra (X, \mathcal{S})$ the complementary
  stratified closed embedding.
  Then all statements
  of \ref{para:open-closed-Sh}
  with $\Sh(-)$ replaced by $\Cons(-,\mathcal{S}_{(-)})$ are
  true. This follows from
  Proposition~\ref{p:loc-sc-grothendieck},
  Lemma~\ref{t:preserve-cons-1} and
  Lemma~\ref{l:*-unit-split-epi-on-inj}.
  Similarly, all statements of \ref{para:open-closed-D-Sh}
  with $\D^+(-)$ replaced by
  $\D^+(\Cons(-,\mathcal{S}_{-}))$ and
  $\Rd$ replaced by $\Rd_\cs$ are true.
  In particular, we have \textit{standard triangles}
  \begin{align}
    \label{eq:ji-standard-triangle-Cons}
    j_!j^* A & \ra A \ra i_* i^*A \ra [1] j_!j^* A,\\
    \label{eq:ij-standard-triangle-Cons}
    i_* (\Rd_\cs i^! A) & \ra A \ra \Rd_\cs j_*(j^* A)
    \ra [1] i_*(\Rd_\cs i^! A),
  \end{align}
  for each $A$ in $\D^+(\Cons(X, \mathcal{S}))$.
  Let us compare these two triangles with the triangles
  \eqref{eq:ji-standard-triangle-Sh}
  and \eqref{eq:ij-standard-triangle-Sh}.

  On the one hand,
  the image of
  \eqref{eq:ji-standard-triangle-Cons} under the functor
  $\real$ coincides with the triangle
  \eqref{eq:ji-standard-triangle-Sh} for $\real(A)$,
  (by the identities
  \eqref{eq:real-e^*} and \eqref{eq:real-e_!})
  i.\,e.\
  the identity morphisms define an isomorphism
  \begin{equation}
    \label{eq:ji-triangle-compatible}
    \xymatrix{
      {\real(j_!j^* A)} \ar[r] &
      {\real(A)} \ar[r] &
      {\real(i_* i^*A)} \ar[r] &
      {\Sigma \real(j_!j^* A)} \\
      {j_!j^* \real(A)} \ar[r]
      \ar[u]^-{\id}_-{=}
      &
      {\real(A)} \ar[r]
      \ar[u]^-{\id}_-{=} &
      {i_* i^*\real(A)} \ar[r]
      \ar[u]^-{\id}_-{=}
      &
      {\Sigma j_!j^* \real(A)}
      \ar[u]^-{\id}_-{=}
    }
  \end{equation}
  of triangles, for any $A$ in $\D^+(\Cons(X, \mathcal{S}))$.

  On the other hand,
  the image of
  \eqref{eq:ij-standard-triangle-Cons} under the functor
  $\real$ and triangle
  \eqref{eq:ij-standard-triangle-Sh} for $\real(A)$ can be
  compared as follows. We claim that
  \begin{equation}
    \label{eq:compatibility-ij-triangles}
    \xymatrix@C=2.5pc{
      {\real(i_*(\Rd_\cs i^! A))}
      \ar[r]^-{\real(\delta_{\cs,A})} \ar@{..>}[d]_{i_*(\tau_A)}
      \ar@{}[rd]|-{\text{\ding{192}}} &
      {\real(A)} \ar[r]^-{\real(\eta_{\cs,A})} \ar[d]_-{\id}^-{=}
      \ar@{}[rd]|-{\text{\ding{193}}} &
      {\real(\Rd_\cs j_*(j^* A))}
      \ar[r] \ar @{..>}[d]_-{\sigma_{j^* A}} &
      {\Sigma\real(i_*(\Rd_\cs i^! A))}
      \ar[d]_-{\Sigma i_*(\tau_A)}\\
      {i_* (\Rd i^! (\real(A)))} \ar[r]^-{\delta_{\real(A)}} &
      {\real(A)} \ar[r]^-{\eta_{\real(A)}} &
      {\Rd j_*(j^* (\real(A)))} \ar[r] &
      {\Sigma i_* (\Rd i^! (\real(A)))}
    }
  \end{equation}
  is a morphism of triangles.  Indeed, first note that the dotted
  arrow $i_*(\tau_A)$ makes the square \ding{192} commutative,
  by Lemma~\ref{l:comp-e_!-e^!}, and that the dotted arrow
  $\sigma_{j^* A}$ makes the square \ding{193} commutative, by
  Lemma~\ref{l:comp-e^*-e_*}.  Second, $\real$ commutes with
  $i_*$ (see \eqref{eq:real-e_!}), and every morphism from the
  essential image of $i_*$ to the essential image of $\Rd j_*$
  is zero (use $j^* i_*=0$).  Hence any morphism from
  $\real(i_*(\Rd_\cs i^! A))$ to any shift of
  $\Rd j_*(j^* (\real(A)))$ is zero.  Therefore
  \cite[Prop.~1.1.9]{BBD} shows that there is no other choice for
  the left (resp. right) dotted arrow making the square
  \ding{192} (resp. \ding{193}) commutative, and that the triple
  $(i_*(\tau_A), \id, \sigma_{j^* A})$ in fact is a morphism of
  triangles as claimed.
\end{para}

\begin{proposition}
  \label{p:sigma-iso-on-strata}



  Assume that
  \begin{equation*}
    \sigma \colon \real \circ \Rd_\cs s_*
    \xRa{\eqref{eq:real-e_*-sigma}} \Rd s_* \circ
    \real
  \end{equation*}
  is an isomorphism for each
  $S \in \mathcal{S}$, where $s \colon S \ra X$ is the inclusion.
  Let
  $e \colon (Z, \mathcal{S}_Z) \ra (X, \mathcal{S})$ be a
  stratified locally closed embedding.
  Then the 2-morphisms $\sigma$ and $\tau$ explained in
  \ref{para:sigma-and-tau} (see
  \eqref{eq:real-e_*-sigma}
  and
  \eqref{eq:real-e^!-tau})
  are 2-isomorphisms
  \begin{align*}
    \sigma \colon \real \circ \Rd_\cs e_*
    & \siRa \Rd e_* \circ \real,\\
    \tau \colon \real \circ \Rd_\cs e^!
    & \siRa \Rd e^! \circ \real.
  \end{align*}
  In particular,
  if $j \colon (U, \mathcal{S}_U) \ra (X, \mathcal{S})$
  is a stratified open embedding
  and $i \colon (F, \mathcal{S}_F) \ra (X, \mathcal{S})$ is its
  complementary stratified closed embedding, then
  the morphism of triangles~\eqref{eq:compatibility-ij-triangles}
  in \ref{para:open-closed-Cons}
  is an isomorphism.
\end{proposition}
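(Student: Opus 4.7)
The plan is to reduce, via the factorization $e = \bar e \circ t$ with $t \colon Z \ra \bar Z$ the stratified open embedding of $Z$ into its closure and $\bar e \colon \bar Z \ra X$ closed, to proving that $\sigma_j$ is a 2-isomorphism for every stratified open embedding $j$, and then to induct on $|\mathcal S|$. The reduction rests on three observations: (i) by Lemma~\ref{l:Rcs-composition-1} together with the automaticity recorded in~\ref{para:sigma-and-tau} (namely $\sigma$ is a 2-iso for closed embeddings and $\tau$ is a 2-iso for open embeddings), the composition formulas $\sigma_{\bar e \circ t} \cong (\Rd \bar e_* \sigma_t) \circ \sigma_{\bar e}$ and $\tau_{\bar e \circ t} \cong (\Rd t^! \tau_{\bar e}) \circ \tau_t$ make it suffice to show $\sigma_t$ iso for open $t$ and $\tau_{\bar e}$ iso for closed $\bar e$; (ii) the hypothesis transfers to any stratified closed union of strata $\bar Z \subset X$, since for a stratum $S \subset \bar Z$ the factorization $s_X = \bar e \circ s_{\bar Z}$, combined with~(i) for $s_X$, the automaticity of $\sigma_{\bar e}$, and the full faithfulness of $\Rd \bar e_*$, forces $\sigma_{s_{\bar Z}}$ iso; and (iii) by diagram~\eqref{eq:compatibility-ij-triangles}, for complementary stratified open $j \colon U \ra X$ and closed $i \colon F \ra X$, the 2-morphism $\sigma_{j, j^* B}$ is an iso for each $B \in \D^+(\Cons(X, \mathcal S))$ iff $\tau_{i, B}$ is (the middle vertex being the identity), and since every $C \in \D^+(\Cons(U, \mathcal S_U))$ has the form $j^* B$ (take $B = j_! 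C$), $\sigma_j$ is a 2-iso iff $\tau_i$ is. Combining~(i)--(iii), it suffices to prove $\sigma_j$ iso for every stratified open embedding $j \colon U \ra X$.

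We proceed by induction on $|\mathcal S|$; the base case is trivial. For the inductive step pick an open stratum $S$ of $X$ and let $j_S \colon S \ra X$ and $i_S \colon F_S := X \setminus S \ra X$ be the complementary inclusions. The hypothesis gives $\sigma_{j_S}$ iso, hence $\tau_{i_S}$ iso by~(iii). By~(ii) the hypothesis transfers to $F_S$, and since $|\mathcal S_{F_S}| = |\mathcal S| - 1$, the inductive hypothesis yields the full conclusion of the proposition for $F_S$. Now let $j \colon U \ra X$ be an arbitrary stratified open embedding; since $U$ is a union of strata, either $U \cap S = \emptyset$ or $S \subset U$. In the first case $U \subset F_S$ and $j$ factors as $U \hookrightarrow F_S \xra{i_S} X$ with first factor a stratified open embedding into $F_S$; by induction its $\sigma$ is iso, and composing with the closed $i_S$ (whose $\sigma$ is automatic) yields $\sigma_j$ iso. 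In the second case the closed complement $F := X \setminus U$ is contained in $F_S$ and $i \colon F \ra X$ factors as $\tilde i \colon F \ra F_S$ (closed in $F_S$) followed by $i_S$; $\tau_{\tilde i}$ is iso by the induction applied to $F_S$, $\tau_{i_S}$ was just shown iso, so $\tau_i$ is iso by the $\tau$-composition formula, and then $\sigma_j$ is iso by~(iii).

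The main obstacle will be the meticulous bookkeeping of 2-morphisms under composition of stratified locally closed embeddings: one must carefully verify the composition formulas for $\sigma$ and $\tau$ together with the simplifications that occur when a factor is open or closed, and keep straight which ambient space one is working over when the hypothesis is transferred to closed subspaces. Once $\sigma$ and $\tau$ have been established as 2-isomorphisms, the final assertion that the morphism of triangles~\eqref{eq:compatibility-ij-triangles} is an isomorphism is immediate: its middle vertex is the identity, its left vertex $i_*(\tau_B)$ is an isomorphism because $\tau_i$ is a 2-iso and $i_*$ is exact and faithful, and its right vertex $\sigma_{j^* B}$ is an isomorphism because $\sigma_j$ is a 2-iso.
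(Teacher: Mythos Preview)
Your argument is correct and takes a genuinely different route from the paper's.  The paper's Steps~1 and~2 coincide with your observations~(i) and~(iii), but in Step~3 the paper inducts on the number of strata in the open subset~$U$ and, in the inductive step, works concretely with injective resolutions: it chooses a closed union of strata $E \subset U$ with open complement $V$, uses the levelwise split short exact sequence $0 \ra e_* e^! I \ra I \ra v_* v^* I \ra 0$ (and its analogue for $J$), applies $j_*$, and reduces to the open embeddings $jv \colon V \ra X$ and $k \colon E \ra X\setminus V$, each with fewer strata.  You instead induct on $|\mathcal S|$: peel off a single open stratum $S$, use~(ii) to transfer the hypothesis to the closed complement $F_S$, invoke the inductive hypothesis there, and finish by a clean dichotomy on whether $S \subset U$ or $S \cap U = \emptyset$.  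Your argument is more categorical and avoids the explicit injective-resolution computation; it also makes observation~(ii) (transfer of the stratum-wise hypothesis to closed subspaces via full faithfulness of $i_*$) explicit, whereas the paper uses this implicitly when it applies the inductive hypothesis to the open embedding $k \colon E \ra X \setminus V$ into a different ambient.  Both approaches rely on the composition formula for $\sigma$ and $\tau$ that you flag as bookkeeping; the paper invokes it in Step~1 with the phrase ``Now the claim is clear.''
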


\begin{proof} We proceed in several steps.

\medskip
  \noindent\underline{Step 1.} (Reduction to open and closed embeddings)
  Let $j \colon (U, \mathcal{S}_U) \ra (X, \mathcal{S})$
  be a stratified open embedding
  and $i \colon (F, \mathcal{S}_F) \ra (X, \mathcal{S})$ its
  complementary stratified closed embedding.
  We claim that it is sufficient to show that the
  two 2-morphisms
  \begin{align}
    \label{eq:real-j_*-sigma-iso}
    \sigma \colon \real \circ \Rd_\cs j_*
    & \Ra \Rd j_* \circ \real,\\
    \label{eq:real-i^!-tau-iso}
    \tau \colon \real \circ \Rd_\cs i^!
    & \Ra \Rd i^! \circ \real
  \end{align}
  are 2-isomorphisms.

  Indeed, 
  the
  locally closed embedding $e$ factors as a stratified open
  embedding $f \colon (Z,
  \mathcal{S}_Z) \ra (\ol{Z}, \mathcal{S}_{\ol{Z}})$ (with dense
  image) followed by a
  stratified closed embedding
  $g \colon (\ol{Z}, \mathcal{S}_{\ol{Z}}) \ra (X, \mathcal{S})$.
  Then
  $\Rd_\cs e_* = \Rd_\cs g_* \circ \Rd_\cs f_*$
  and
  $\Rd_\cs e^! = \Rd_\cs f^! \circ \Rd_\cs g^!$
  (Lemma~\ref{l:Rcs-composition-1}),
  and we already know that $\sigma$ is a 2-isomorphism for
  stratified
  closed
  embeddings (see sentence after
  \eqref{eq:sigma-A-via-IJ}) and that $\tau$ is a
  2-isomorphism for stratified open embeddings (see sentence after
  \eqref{eq:tau-B-via-I'J'}). Now the claim is clear.

\medskip
  \noindent\underline{Step 2.}
  We claim that \eqref{eq:real-i^!-tau-iso}
  is a 2-isomorphism if and only if
  \eqref{eq:real-j_*-sigma-iso} is a 2-isomorphism.

  Indeed, this follows from the morphism of triangles
  \eqref{eq:compatibility-ij-triangles}
  in \ref{para:open-closed-Cons}
  together with the fact that $j^* \colon \D^+(\Cons(X,
  \mathcal{S}) \ra D^+(\Cons(U, \mathcal{S}_U))$ is essentially
  surjective and that $i_* \colon D^+(\Cons(F, \mathcal{S}_F))
  \ra
  D^+(\Cons(X, \mathcal{S}))$ is fully faithful.

  \medskip
  \noindent\underline{Step 3.}
  By the above steps,
  it suffices to show that \eqref{eq:real-j_*-sigma-iso} is
  a 2-isomorphism. We do this
  by induction on the number of strata in $U$, the case
  $U=\emptyset$ being trivial, and the base case that $U$
  consists of precisely one (open) stratum being clear by
  assumption.


  Now assume that $U$ consists of $\geq 2$ strata.
  Let $\kappa \colon I \ra J$ be a quasi-isomorphism in $\C(U)$
  where $I$ and $J$ are bounded below complexes of injective
  objects of $\Cons(U, \mathcal{S}_U)$ and $\Sh(U)$,
  respectively.
  By the explicit description
  of
  $\sigma$
  in
  \ref{para:sigma-and-tau} (see equations~\eqref{eq:sigma-A} and
  \eqref{eq:sigma-A-via-IJ}),
  we need to show that $j_*(\kappa) \colon j_*(I) \ra j_*(J)$ is
  a quasi-isomorphism.

  Let $E$ be a non-empty proper closed subset of $U$ that is a
  union of strata (for example a stratum that is closed in $U$;
  existence follows from \ref{para:S-open-in-olS-poset}). Let
  $V:=U-E$
  be its (non-empty) open complement in $U$.
  We obtain the commutative diagram
  \begin{equation*}
    \xymatrix{
      {X-V} \ar[r]^-{d} &
      {X}\\
      {E} \ar[u]^-{k} \ar[r]^-{e} &
      {U} \ar[u]^-{j} &
      {V} \ar[l]_-{v}
    }
  \end{equation*}
  where the square is cartesian, $v$, $j$ and $k$ are stratified
  open embeddings, and $d$ and $e$ are stratified closed
  embeddings.

  Consider the commutative diagram
  \begin{equation*}
    \xymatrix@C=3pc{
      {0} \ar[r] &
      {e_* e^! I}
      \ar[r] \ar[d]_-{e_*(e^!(\kappa))}
      &
      {I} \ar[r] \ar[d]_-{\kappa}
      &
      {v_* v^* I}
      \ar[d]_-{v_*(v^*(\kappa))}
      \ar[r]
      &
      {0}
      \\
      {0} \ar[r] &
      {e_* e^! J} \ar[r] &
      {J} \ar[r] &
      {v_*v^* J}
      \ar[r]
      &
      {0}
    }
  \end{equation*}
  in $\C^+(U)$ (the image of this diagram in $\D^+(U)$ is
  essentially the diagram \eqref{eq:compatibility-ij-triangles}
  for the closed-open decomposition $E \xra{e} U \xla{v} V$).
  Note that the top row of this diagram is a levelwise split
  short exact sequence of complexes of injective objects of
  $\Cons(U, \mathcal{S}_U)$, by
  Lemma~\ref{l:*-unit-split-epi-on-inj} and the fact that all
  functors $e_*$, $e^!$, $v^*$, $v_*$ have exact left
  adjoints.
  Similarly, using
  Remark~\ref{r:*-unit-split-epi-on-inj}, the bottom row is a
  levelwise split short exact sequence of complexes of
  injective objects of $\Sh(U)$.

  If we apply $\kappa$ to this diagram, we again obtain a
  morphism of (levelwise split) short exact sequences.
  The associated long exact cohomology sequence then shows that
  $j^*(\kappa)$ is a quasi-isomorphism if both morphisms
  \begin{align}
    \label{eq:e-kappa}
    j_*(e_*(e^!(\kappa))) \colon
    j_*e_* e^! I \ra
    j_*e_* e^! J,\\
    \label{eq:v-kappa}
    j_*(v_*(v^*(\kappa)))
    \colon
    j_*v_* v^* I
    \ra
    j_*v_*v^* J
  \end{align}
  are quasi-isomorphisms.

  Since $v^*(\kappa) \colon v^* I \ra v^* J$ is a
  quasi-isomorphism between bounded below complexes of injectives
  in $\Cons(V, \mathcal{S}_V)$ and $\Sh(V)$, respectively, its
  $*$-pushforward under the stratified open embdding $j \circ v$
  is a quasi-isomorphism by the induction assumption since $V$
  consists of strictly less strata than $U$.
  This shows that \eqref{eq:v-kappa} is a quasi-isomorphism.

  Again using that $V$ has less strata than $U$, the induction
  assumption shows that
  $\real \circ \Rd_\cs v_*
  \Ra \Rd v_* \circ \real$ is a 2-isomorphism. Hence Step 2 shows
  that
  $e^!(\kappa) \colon
  e^! I \ra
  e^! J$ is a quasi-isomorphism. Note that $e^!I$ and $e^!J$ are
  bounded below complexes of injective
  objects of $\Cons(E, \mathcal{S}_E)$ and $\Sh(E)$,
  respectively. Hence the induction assumption for the
  stratified open embedding $k$, using that $E$ has strictly less
  strata than $U$, shows that $k_*(e^!(\kappa))$ is a
  quasi-isomorphism. Since $d$ is a closed embedding,
  $d_*(k_*(e^!(\kappa)))$ is a
  quasi-isomorphism. Since $d_* \circ k_* \cong j_* \circ
  e_*$ we see that
  \eqref{eq:e-kappa} is a quasi-isomorphism. This establishes the
  induction step.
\end{proof}

\subsection{Completion of the proof of Theorem \ref{t:equivalent-conditions-for-equivalence-1}}

\begin{theorem}
  \label{t:equivalent-conditions-for-equivalence}(=Theorem \ref{t:equivalent-conditions-for-equivalence-1})
  For a
  \ref{enum:loc-sc}-\ref{enum:loc-sa}-\ref{enum:l_*-preserves-cons}-stratified
  space $(X, \mathcal{S})$, the following conditions are equivalent.
  \begin{enumerate}
  \item
    \label{enum:equivalence-X}
    The functor
    \begin{equation}
      \label{eq:equi-D+-X}
      \real \colon \D^+(\Cons(X, \mathcal{S})) \ra
      \D_\mathcal{S}^+(X)
    \end{equation}
    is an equivalence.
  \item
    \label{enum:equiv-S-and-sigma-iso}
    For all strata $S \in \mathcal{S}$,
    the functor
    \begin{equation}
      \label{eq:equi-D+-S}
      \real \colon \D^+(\Loc(S)) \ra
      \D_\Loc^+(S)
    \end{equation}
    is an equivalence (cf.\
    Remark~\ref{r:stratawise-equiv}),
    and
    \begin{equation}
      \label{eq:equi-D+-sigma}
      \sigma \colon \real \circ \Rd_\cs s_*
      \xRa{\eqref{eq:real-e_*-sigma}} \Rd s_* \circ
      \real
    \end{equation}
    is a 2-isomorphism, where $s
    \colon S \ra X$ denotes the inclusion (cf.\
    Proposition~\ref{p:sigma-iso-on-strata}).
  \end{enumerate}
  If these equivalent conditions are satisfied, then
  \begin{equation}
    \label{eq:equi-D+-Z}
    \real \colon \D^+(\Cons(Z, \mathcal{S}_Z)) \ra
    \D_{\mathcal{S}_Z}^+(Z)
  \end{equation}
  is an equivalence for all stratified locally closed embeddings
  $e \colon (Z, \mathcal{S}_Z) \ra (X, \mathcal{S})$,
\end{theorem}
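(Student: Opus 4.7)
My plan is to establish both the equivalence \ref{enum:equivalence-X}$\Leftrightarrow$\ref{enum:equiv-S-and-sigma-iso} and the addendum about stratified locally closed subspaces. The direction \ref{enum:equivalence-X}$\Rightarrow$\ref{enum:equiv-S-and-sigma-iso} together with the addendum I will treat first, by a direct ``descent along $e_!$'' argument. The converse \ref{enum:equiv-S-and-sigma-iso}$\Rightarrow$\ref{enum:equivalence-X} will require an induction on $|\mathcal{S}|$ based on the recollement determined by a closed stratum; the case $|\mathcal{S}|=1$ is tautological.

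For the first direction and the addendum, let $e\colon(Z,\mathcal{S}_Z)\ra(X,\mathcal{S})$ be a stratified locally closed embedding. The functor $e_!$ preserves constructibility by \ref{p:e^*-e_!-preserve}, and is fully faithful on both the constructible and sheaf derived categories: already on the abelian level one has $e^*e_!=\id$ (for open $e$ this is the unit of $(e_!,e^*)$; for closed $e$ it is the counit of $(e^*,e_*=e_!)$; in general factor $e$ through its closure), and since $e^*$ and $e_!$ are exact this persists on $\D^+$. Moreover $\real$ commutes identically with $e^*$ and $e_!$ by \eqref{eq:real-e^*} and \eqref{eq:real-e_!}. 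Assuming $\real\colon \D^+(\Cons(X,\mathcal{S}))\ra \D^+_\mathcal{S}(X)$ is an equivalence, a short diagram chase propagates the same three properties to $\real\colon \D^+(\Cons(Z,\mathcal{S}_Z))\ra \D^+_{\mathcal{S}_Z}(Z)$: faithfulness and fullness are transferred through $e_!$ and its full faithfulness, while essential surjectivity follows by noting that if $G\in \D^+_{\mathcal{S}_Z}(Z)$ and $\real(A)\cong e_!(G)$, then $\real(e^*A)\cong e^*e_!(G)=G$. This proves the addendum. Specialised to a stratum inclusion $s\colon S\ra X$, it yields the stratawise equivalence of \ref{enum:equiv-S-and-sigma-iso}. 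The iso-ness of $\sigma_s$ then follows from uniqueness of right adjoints: $\real\circ\Rd_\cs s_*$ and $\Rd s_*\circ\real$ are both right adjoint to the functor $s^*\circ\real=\real\circ s^*$ (via the two derived $*$-adjunctions composed with the equivalences on $X$ and $S$), and the canonical $\sigma_s$, which is compatible with the adjunction units by Lemma~\ref{l:comp-e^*-e_*}, is the unique natural transformation identifying them, hence a 2-isomorphism.

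For \ref{enum:equiv-S-and-sigma-iso}$\Rightarrow$\ref{enum:equivalence-X}, I induct on $|\mathcal{S}|$. In the induction step, pick a closed stratum $F\in\mathcal{S}$ (existence by \ref{para:S-open-in-olS-poset}), let $i\colon F\ra X$ be its inclusion and $j\colon U:=X\setminus F\ra X$ the complementary open stratified embedding. I first verify that \ref{enum:equiv-S-and-sigma-iso} descends to $(U,\mathcal{S}_U)$ and $(F,\mathcal{S}_F)$: the stratawise equivalences are inherited unchanged, and for a stratum $S\subset U$ with inclusion $s_U\colon S\ra U$ the iso-ness of $\sigma_{s_U}$ follows from the factorization $s=j\circ s_U$ and the resulting pasting $\sigma_s=(\Rd j_*\star\sigma_{s_U})\circ(\sigma_j\star\Rd_\cs (s_U)_*)$, together with the iso-ness of $\sigma_s$ (hypothesis for $X$) and of $\sigma_j$ (from Proposition~\ref{p:sigma-iso-on-strata} applied to $X$) and the full faithfulness of $\Rd j_*$ for the open embedding $j$. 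By induction, \ref{enum:equivalence-X} then holds for both $(U,\mathcal{S}_U)$ and $(F,\mathcal{S}_F)$. To glue, I use the standard triangles \eqref{eq:ji-standard-triangle-Cons} and \eqref{eq:ij-standard-triangle-Cons} in $\D^+(\Cons(X,\mathcal{S}))$ together with their counterparts in $\D^+_\mathcal{S}(X)$; Proposition~\ref{p:sigma-iso-on-strata} ensures that $\real$ transports one into the other as an isomorphism of triangles, i.e.\ the morphism of triangles \eqref{eq:compatibility-ij-triangles} is an iso. For full faithfulness of $\real$, applying $\Hom(-,B)$ to the triangle $j_!j^*A\ra A\ra i_*i^*A$ and to its image under $\real$, the adjunctions $(j_!,j^*)$ and $(i^*,i_*)$ reduce the relevant $\Hom$ groups to $\Hom$ groups on $U$ and $F$; a Five Lemma argument applied to the resulting comparison, using the induction hypothesis, yields the claim. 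For essential surjectivity, given $G\in \D^+_\mathcal{S}(X)$, lift $j^*G$ and $i^*G$ via induction to objects $\tilde{G}_U\in \D^+(\Cons(U,\mathcal{S}_U))$ and $\tilde{G}_F\in \D^+(\Cons(F,\mathcal{S}_F))$; lift the connecting morphism $i_*i^*G\ra \Sigma j_!j^*G$ to the constructible side using the fullness just proved; and complete the resulting triangle to produce a lift of $G$.

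The main obstacle is the gluing step in the induction, specifically the interplay between the standard triangles on the two sides of $\real$. Proposition~\ref{p:sigma-iso-on-strata}, which promotes the stratawise iso-ness of $\sigma$ to iso-ness of $\sigma$ and $\tau$ for all stratified locally closed embeddings and, crucially, turns the morphism of triangles \eqref{eq:compatibility-ij-triangles} into an iso, is the key tool enabling both the Five Lemma argument and the cone construction; without it neither half of the induction step would go through. The subsidiary descent of condition \ref{enum:equiv-S-and-sigma-iso} from $X$ to the open subspace $U$, while technically elementary, relies on three ingredients -- compositional compatibility of the $\sigma$'s, iso-ness of $\sigma_j$, and full faithfulness of $\Rd j_*$ -- and is what makes the induction hypothesis applicable to $U$.
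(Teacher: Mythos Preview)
Your argument for \ref{enum:equivalence-X}$\Rightarrow$\ref{enum:equiv-S-and-sigma-iso} and for the addendum is essentially the paper's: descent along the fully faithful $e_!$, followed by uniqueness of right adjoints to get that $\sigma$ is a 2-isomorphism. This part matches.

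For \ref{enum:equiv-S-and-sigma-iso}$\Rightarrow$\ref{enum:equivalence-X} you take a genuinely different route. The paper does \emph{not} induct on $|\mathcal{S}|$. Instead it argues directly: by d\'evissage via the standard triangles \eqref{eq:ji-standard-triangle-Cons} and \eqref{eq:ij-standard-triangle-Cons}, full faithfulness of $\real$ reduces to bijectivity of
\[
\real\colon \Hom_{\D(\Cons(X,\mathcal{S}))}(s_!A,\Rd_\cs t_*B)\ \longrightarrow\ \Hom_{\D(X)}(\real(s_!A),\real(\Rd_\cs t_*B))
\]
for all strata $S,T$ and $A\in\D^+(\Loc(S))$, $B\in\D^+(\Loc(T))$; then Lemma~\ref{l:*-adj-bijections-real} together with the hypothesis that $\sigma_B$ is an iso collapses this to $\Hom_{\D(\Loc(T))}(t^*s_!A,B)\to\Hom_{\D(T)}(\real(t^*s_!A),\real(B))$, which is bijective by the stratawise equivalence. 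Essential surjectivity is again a single d\'evissage using \eqref{eq:ji-standard-triangle-Sh}, reducing to objects of the form $s_!A$. The paper thus never needs to descend condition~\ref{enum:equiv-S-and-sigma-iso} to $U$, never invokes Proposition~\ref{p:sigma-iso-on-strata} in this direction, and avoids the Five-Lemma/cone-completion gluing entirely. Your inductive recollement argument is correct---the descent of $\sigma_{s_U}$ via the pasting factorization, the iso-ness of $\sigma_j$ from Proposition~\ref{p:sigma-iso-on-strata}, and the reflection of isos by the fully faithful $\Rd j_*$ all work as stated---and it has the virtue of making the recollement structure transparent; but it is heavier machinery than the paper's direct computation, and it leans on Proposition~\ref{p:sigma-iso-on-strata} (and implicitly on the compatibility Lemmas~\ref{l:*-adj-bijections-real}, \ref{l:!-adj-bijections-real} to make the Five-Lemma squares commute), whereas the paper only needs the raw hypothesis on the stratum inclusions.
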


\begin{remark}
  \label{r:stratawise-equiv}
  If all strata are
  locally simply connected and
  locally singular-acyclic,
  then the condition that
  \eqref{eq:equi-D+-S} is an equivalence, for $S \in \mathcal{S}$,
  is equivalent to the condition
  that the universal covering of each path component of $S$ is
  acyclic (see Theorem~\ref{t:main-one-stratum-2}).
  The condition that \eqref{eq:equi-D+-sigma}
  is a 2-isomorphism will be analyzed later on.
\end{remark}

\begin{proof}
  Assume that \ref{enum:equivalence-X} holds.
  Let $e \colon (Z, \mathcal{S}_Z) \ra (X, \mathcal{S})$
  be a stratified locally closed embedding and consider the
  commutative diagram
  \begin{equation}
    \label{eq:Z-X-real}
    \xymatrix{
      {\D^+(\Cons(Z, \mathcal{S}_Z))}
      \ar[r]^-{e_!} \ar[d]^-{\real_Z} &
      {\D^+(\Cons(X, \mathcal{S}))}
      \ar[d]^-{\real_X}\\
      {\D^+_{\mathcal{S}_Z}(Z)}
      \ar[r]^-{e_!} &
      {\D^+_\mathcal{S}(X)}
    }
  \end{equation}
  defined by the exact functor $e_!$ (\ref{para:e^*-e_!-commute-real}).
  We claim that $\real_Z$ is an equivalence.
  Note that both horizontal functors are
  fully faithful, because $e$ factors as the
  stratified open
  embedding $j
  \colon Z
  \ra \ol{Z}$
  followed by the stratified closed embedding $i \colon \ol{Z}
  \ra X$, we have $e_! =i_! \circ j_!$ and $i_!$ and $j_!$
  are
  fully faithful on the level of derived categories of
  constructible sheaves and on the level of derived categories of
  sheaves, by \ref{para:open-closed-D-Sh} and
  \ref{para:open-closed-Cons}.
  Since $\real =\real_X$ is an
  equivalence by assumption, we deduce that $\real_Z$ is fully
  faithful. Let $B \in \D^+_{\mathcal{S}_Z}(Z)$. Since $\real_X$
  is essentially
  surjective, there are an object $A \in \D^+(\Cons(X,
  \mathcal{S}))$ and an
  isomorphism $\real(A) \cong e_!(B)$.
  Then
  \begin{equation*}
    \real_Z(e^* A)
    \xla[=]{\eqref{eq:real-e^*}}
    e^*\real(A)
    \cong
    e^*e_!(B)
    \cong j^* i^* i_* j_!(B)
    \sira j^* j_!(B)
    \sila
    B.
  \end{equation*}
  This shows that $\real_Z$ is essentially surjective and hence
  an equivalence. 

  Since the vertical arrows in \eqref{eq:Z-X-real} are
  equivalences, the right adjoint $\Rd_\cs e^!$ of the upper
  horizontal functor $e_!$ ``coincides'' with the right adjoint
  $\Rd e^!$ of
  the lower horizontal functor $e_!$, and hence
  $\tau$ (\ref{para:sigma-and-tau})
  is a 2-isomorphism.
  Let us turn this into a formal argument using diagram
  \eqref{eq:!-adj-bijections-real} in
  Lemma~\ref{l:!-adj-bijections-real}. In our setting, all arrows
  there except for the arrow labeled $? \circ \tau_B$ are
  bijective. Hence,
  using the Yoneda lemma and the fact that
  $\real=\real_Z$ is an equivalence shows
  that $\tau_B$ is an isomorphism.

  The same argument with $e_!$ works also for $e_*$ and shows
  that $\sigma$ is a 2-isomorphism.

  This shows that \ref{enum:equivalence-X} implies that
  \eqref{eq:equi-D+-Z} is an equivalence
  and that
  $\sigma$ and $\tau$ are 2-isomorphisms.

  In particular,
  \ref{enum:equivalence-X}
  implies \ref{enum:equiv-S-and-sigma-iso}, because
  $(S, \{S\}) \ra (X, \mathcal{S})$ is a stratified locally
  closed embedding, for all strata $S \in \mathcal{S}$.

  Conversely, assume that
  \ref{enum:equiv-S-and-sigma-iso} holds.
  We want to show that \eqref{eq:equi-D+-X}
  is an equivalence.

  Given strata $S, T \in \mathcal{S}$, let $s \colon S \ra X$ and
  $t \colon T \ra X$ denote the corresponding stratified locally
  closed embeddings.  Consider the classes
  \begin{align*}
    \mathcal{A}
    & :=\{s_!(A) \mid S \in \mathcal{S}, A \in
      \D^+(\Loc(S))=\D^+(\Cons(S, \mathcal{S}_S))\},\\
    \mathcal{B}
    & :=\{\Rd_\cs t_*(B) \mid T \in \mathcal{S}, B \in
      \D^+(\Loc(T))=\D^+(\Cons(T, \mathcal{S}_T))\}
  \end{align*}
  of objects of $\D^+(\Cons(X, \mathcal{S}))$.
  In order to prove that
  \begin{equation}
    \label{eq:real-main}
    \real \colon \D^+(\Cons(X, \mathcal{S}) \ra \D(X)
  \end{equation}
  is fully faithful it suffices,
  by the obvious d\'evissage using
  standard triangles of the form
  \eqref{eq:ji-standard-triangle-Cons}
  and \eqref{eq:ij-standard-triangle-Cons},

  to show that
  \begin{equation*}
    \real \colon
    \Hom_{\D(\Cons(X, \mathcal{S}))}(s_!(A), \Rd_\cs t_*(B))
    \ra
    \Hom_{\D(X)}(\real(s_!(A)), \real(\Rd_\cs t_*(B)))
  \end{equation*}
  is bijective, for all $S, T \in \mathcal{S}$ and $A, B \in
  \D^+(\Cons(X, \mathcal{S}))$.
  Lemma~\ref{l:*-adj-bijections-real}
  together with
  our assumption
  that $\sigma_B \colon \real(\Rd_\cs t_*(B))
  \sira \Rd t_*(\real(B))$
  is an isomorphism show that it
  is sufficient to see that
  \begin{equation*}
    \real \colon
    \Hom_{\D(\Loc(T))}(t^*(s_!(A)), B)
    \ra
    \Hom_{\D(T)}(\real(t^*(s_! A)), \real(B))
  \end{equation*}
  is bijective.
  But this follows from the assumption that
  the functor \eqref{eq:equi-D+-S}
  is fully faithful.

  Clearly,
  \eqref{eq:real-main} lands in
  $\D_\mathcal{S}^+(X)$, so it remains to show that
  $\D_\mathcal{S}^+(X)$
  is the essential image of
  \eqref{eq:real-main}.

  D\'evissage using triangles of the form
  \eqref{eq:ji-standard-triangle-Sh}
  shows
  that it is enough to prove that all objects of the form
  $s_!(A)$, for $S \in \mathcal{S}$ and
  $A \in \D^+_{\mathcal{S}_S}(S)=\D^+_\Loc(S)$
  are in the
  essential image of \eqref{eq:real-main}. But this is clear by
  the assumption that \eqref{eq:equi-D+-S} is essentially
  surjective and the fact that
  $e_! \circ \real = \real \circ e_!$ (see \eqref{eq:real-e_!})
  This show that
  \eqref{eq:equi-D+-X} is an equivalence.
\end{proof}




\section{Stratified spaces with normal structure}

We would like to take a closer look at the equivalent conditions in Theorem
\ref{t:equivalent-conditions-for-equivalence-1} in case of stratified spaces with {\it normal structure} to be defined below. For simplicity we limit our discussion to the case when each stratum is a manifold.

\begin{definition} A {\bf cone} over a topological space $L$ is the space $$\Cone(L)=\frac{L \times [0,1)}{L \times \{0\}}$$
\end{definition}

\begin{definition}\label{def-normal-str} Let $(X,\mathcal{S})$ be a stratified space. We say that it has a {\bf normal structure} if the following holds
\begin{itemize}
\item Each stratum is a connected manifold and there are finitely many strata.
\item For each stratum $T\in \mathcal{S}$ and each point $x\in T$ there exists an open neighbourhood $x\in V_x\subset X$ which is homeomorphic to the product
    $$V_x\simeq \Cone(L_x)\times (V_x\cap T)$$
    where $V_x\cap T$ is homeomorphic to a ball, and $L_x$ is called the {\bf link} of $T$ at $x$.
\item The link $L_x$ has a stratification indexed by the strata $S\in \mathcal{S}$ such that $T\subset \overline{S}$
     $$L_x=\bigcup _{T\subset \overline{S}}L_{x,S}$$
     It has the property that $S\cap V_x=L_{x,S}\times (0,1)\times (V_x\cap T)$. The strata $L_{x,S}$ (and hence also the intersection $S\cap V_x$) may have finitely many connected components. We call the stratum $L_{x,S}$ the {\bf link of $T$ in $S$}.
\end{itemize}
\end{definition}

\begin{para} In the Definition \ref{def-normal-str} it is clear that for any point $x\in T$ the open neighbourhoods $V_x$ with the mentioned properties form a fundamental system of open neighbourhoods of $x$.
\end{para}

\begin{para} Any Whitney stratified space has a normal structure \cite{McP}.
\end{para}

\begin{para} For many stratified spaces with normal structure, the link $L_x$ of a stratum $T$ at $x$ is independent (up to a homeomorphism) of the point $x\in T$.
\end{para}

\begin{lemma} \label{str-sp-is-ok} A stratified space $(X,\mathcal{S})$ with a normal structure is a \ref{enum:loc-sc}-\ref{enum:loc-sa}-\ref{enum:l_*-preserves-cons}-stratified
  space.
\end{lemma}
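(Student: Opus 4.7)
The plan is as follows. Conditions \ref{enum:loc-sc} and \ref{enum:loc-sa} are immediate from the hypothesis that every stratum is a connected manifold: manifolds are locally Euclidean and hence locally contractible, and any locally contractible space is locally simply connected and locally acyclic (as recalled in the unnumbered paragraph preceding Proposition~\ref{p:fiber-functor-1}); locally acyclic implies locally 1-acyclic because acyclic implies 1-acyclic.

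The bulk of the work is to verify \ref{enum:l_*-preserves-cons}. Fix the inclusion $l \colon S \hookrightarrow X$ of a stratum and $F \in \Loc(S)$; I need to show that for every stratum $T$ with inclusion $t \colon T \hookrightarrow X$ and every $q \geq 0$, the sheaf $t^{*}\mathcal{H}^{q}(\Rd l_{*}F)$ lies in $\Loc(T)$. Three cases arise. If $T \cap \overline{S} = \emptyset$, then $l_{*}F$ vanishes on the open set $X \setminus \overline{S}$ and so does $\Rd l_{*}F$. If $T = S$, then $S$ is open in $\overline{S}$, so $(\Rd l_{*}F)|_{S} = F \in \Loc(S)$. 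The essential case is $T \subset \overline{S}$ with $T \neq S$.

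In this case, fix $y \in T$ and use the fundamental system of open neighborhoods $V_{y} \cong \Cone(L_{y}) \times U_{y}$ provided by the normal structure, so that
\begin{equation*}
  V_{y} \cap S \;=\; L_{y,S} \times (0,1) \times U_{y}.
\end{equation*}
Computing stalks as direct limits over this cofinal system gives
\begin{equation*}
  \bigl(\mathcal{H}^{q}(\Rd l_{*}F)\bigr)_{y}
  \;=\; \colim_{V_{y}} \HH^{q}\bigl(L_{y,S} \times (0,1) \times U_{y},\, F|_{V_{y} \cap S}\bigr).
\end{equation*}
Since $(0,1) \times U_{y}$ is contractible and $L_{y,S}$ has finitely many components, applying Proposition~\ref{p:fiber-functor-2} componentwise to $V_{y} \cap S$ shows that $F|_{V_{y} \cap S}$ is the pullback along the projection $L_{y,S} \times (0,1) \times U_{y} \to L_{y,S}$ of some $F_{y} \in \Loc(L_{y,S})$. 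A Künneth-type argument then identifies the right-hand side with $\HH^{q}(L_{y,S}, F_{y})$, which does not depend on the size of $V_{y}$.

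To upgrade this pointwise calculation to local constancy on $T$, I would fix one reference neighborhood $V_{x} = \Cone(L_{x}) \times U_{x}$ at a point $x \in T$ and observe that for each $y \in U_{x}$, the sets of the form $\Cone(L_{x}) \times W$ with $y \in W \subset U_{x}$ form a cofinal system of fundamental neighborhoods of $y$ inside $V_{x}$, with cone link canonically identified with $L_{x}$. The product structure gives canonical identifications between the Künneth computations for varying $y \in U_{x}$, yielding a locally constant structure on $\mathcal{H}^{q}(\Rd l_{*}F)|_{U_{x}}$. The main obstacle, and the point where the normal structure is decisive, is ensuring that the cohomology identifications are genuinely natural in $y$ (so that the stalks assemble into a locally constant sheaf) rather than merely being pointwise isomorphisms; this is built into the product decomposition $V_{x} = \Cone(L_{x}) \times U_{x}$ together with the requirement that $V_{x} \cap S = L_{x,S} \times (0,1) \times U_{x}$ respects this product.
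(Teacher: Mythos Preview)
Your proposal is correct and follows essentially the same route as the paper: reduce to the product neighborhood $V_x \cap S = L_{x,S} \times (0,1) \times (V_x \cap T)$, use contractibility of the second and third factors to identify the relevant cohomology with $\HH^q(L_{x,S}, F)$, and conclude local constancy along $T$. The only noteworthy difference is organizational: the paper computes the presheaf $V_x \cap T \mapsto \HH^q(V_x \cap S, F)$ directly on this basis of opens and observes it is constant there, which immediately yields local constancy of the associated sheaf; you instead compute the stalk as a colimit and then separately argue that the stalks assemble into a locally constant sheaf via the product structure. Your two-step version is more explicit about the naturality issue you flag, but the paper's one-step presheaf computation sidesteps it entirely.
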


\begin{proof} Since each stratum is a manifold, it is clearly locally simply connected and locally acyclic. We only need to verify the \ref{enum:l_*-preserves-cons} condition.

Let $l:S\ra X$ be the embedding of a stratum, $F\in \Loc (S)$, and choose a stratum $T$ such that $T\subset \overline{S}$. We need to prove that the cohomology sheaves of the restriction of the complex $\Rd l_*F \in \D ^+(X)$ to $T$ are locally constant. Let $x\in T$ and let $V_x\subset X$ be an open neighbourhood as in the Definition \ref{def-normal-str}. The sheaf $\HH ^i(\Rd l_*F)\vert _{T}$ is associated to the presheaf
\begin{equation}\label{presh} V_x\cap T\mapsto \HH^i(V_x\cap S,F)
\end{equation}
We have $V_x\cap S=L_{x,S}\times (0,1)\times (V_x\cap T)$. Since $F\in \Loc (S)$ and the spaces $(0,1)$ and $V_x\cap T$ are contractible, we have
$$\HH ^i(V_x\cap S,F)=\HH ^i(L_{x,S},F)$$
therefore the presheaf \eqref{presh} is locally constant.
\end{proof}

\begin{para} So for a stratified space $(X,\mathcal{S})$ with a normal structure it makes sense to ask if the equivalent conditions of Theorem \ref{t:equivalent-conditions-for-equivalence-1} are satisfied. A partial answer is given by the following corollary.
\end{para}

\begin{corollary} \label{suf-cond-for-eq} Let $(X,\mathcal{S})$ be a stratified space with a normal structure. Let $l:S\ra X$ be the inclusion of a stratum. Then the following are equivalent:

(1) The morphism
\eqref{eq:equi-D+-sigma-1} in Theorem \ref{t:equivalent-conditions-for-equivalence-1}
\begin{equation}
      \label{eq:equi-D+-sigma-2}
      \sigma \colon \real \circ \Rd_\cs l_*
      \xRa{\eqref{eq:real-e_*-sigma-1}} \Rd l_* \circ
      \real
    \end{equation}
    is a 2-isomorphism.

    (2) Fix an injective object $I\in \Loc(S)$. Then for any stratum $T\subset \overline{S}$, any point $x\in T$, and any open neighbourhood $x\in V_x\subset X$ as in Definition \ref{def-normal-str} the sheaf
    $I\vert _{L_{x,S}}$ is acyclic for the functor
    $$\Gamma (L_{x,S},-)\colon \Sh (L_{x,S})\ra \Mod (R)$$
\end{corollary}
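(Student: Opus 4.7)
The plan is to reinterpret the condition that $\sigma$ is a 2-isomorphism as the statement that injective objects of $\Loc(S)$ are $l_*$-acyclic, and then to compute the relevant cohomology via the product structure provided by the normal structure.

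First I would unwrap the construction of $\sigma$ from \ref{para:sigma-and-tau}. For $A \in \D^+(\Loc(S))$, pick an injective resolution $A \to I$ in $\Loc(S)$ and an injective resolution $A \to J$ in $\Sh(S)$ together with a comparison quasi-isomorphism $\kappa \colon I \to J$; then $\sigma_A$ is represented by $l_*(\kappa) \colon l_* I \to l_* J$. If every injective object of $\Loc(S)$ is $l_*$-acyclic, then the complex $l_* I$ of $l_*$-acyclics already computes $\Rd l_* A$, so $l_*(\kappa)$ is a quasi-isomorphism and $\sigma_A$ is an isomorphism. Conversely, applying $\sigma$ to a single injective $I \in \Loc(S)$ placed in degree zero shows that $\sigma$ being a 2-isomorphism forces $l_* I \to \Rd l_* I$ to be an isomorphism in $\D^+(X)$, i.e.\ $\HH^q(\Rd l_* I) = 0$ for all $q > 0$.

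Next I would translate this into stalk vanishings on strata. By \ref{enum:l_*-preserves-cons} (which holds for $(X, \mathcal{S})$ by Lemma \ref{str-sp-is-ok}), the complex $\Rd l_* I$ lies in $\D^+_\mathcal{S}(X)$, so each $\HH^q(\Rd l_* I)$ is $\mathcal{S}$-constructible. Since $l^* \Rd l_* I \simeq I$, these cohomology sheaves vanish on $S$ for $q > 0$, and we only need to check vanishing on the strata $T \subset \overline{S}$ with $T \neq S$. Each such $T$ is a connected manifold, so a locally constant sheaf on $T$ is zero iff its stalk vanishes at some (equivalently, every) point $x \in T$.

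Then I would compute the stalk $\HH^q(\Rd l_* I)_x$ for $x \in T \subset \overline{S}$. Recall that $\HH^q(\Rd l_* I)$ is the sheafification of $V \mapsto \HH^q(V \cap S, I)$, so the stalk at $x$ equals $\colim_{V \ni x} \HH^q(V \cap S, I)$. The neighbourhoods $V_x \simeq \Cone(L_x) \times (V_x \cap T)$ from the normal structure form a cofinal system. Since $V_x \cap S = L_{x,S} \times (0,1) \times (V_x \cap T)$ and the last two factors are contractible while $I$ is locally constant on $S$, the same computation used in the proof of Lemma \ref{str-sp-is-ok} gives $\HH^q(V_x \cap S, I) = \HH^q(L_{x,S}, I|_{L_{x,S}})$, and shrinking $V_x$ induces an isomorphism on these groups. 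Hence $\HH^q(\Rd l_* I)_x \cong \HH^q(L_{x,S}, I|_{L_{x,S}})$. Putting everything together, $\sigma$ is a 2-isomorphism iff $\HH^q(L_{x,S}, I|_{L_{x,S}}) = 0$ for every injective $I \in \Loc(S)$, every $T \subset \overline{S}$ with $T \neq S$, every $x \in T$, every admissible $V_x$, and every $q > 0$, which is exactly condition (2). The only real obstacle is the identification $\HH^q(\Rd l_* I)_x \cong \HH^q(L_{x,S}, I|_{L_{x,S}})$ via cofinality of $\{V_x\}$ and the product decomposition; everything else is standard derived-functor reasoning together with the recall of the construction of $\sigma$.
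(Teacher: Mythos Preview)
Your proposal is correct and follows essentially the same approach as the paper: both directions reduce the 2-isomorphism condition on $\sigma$ to the statement that every injective of $\Loc(S)$ is $l_*$-acyclic in $\Sh(S)$, and then identify the stalks $\HH^q(\Rd l_* I)_x$ with $\HH^q(L_{x,S}, I|_{L_{x,S}})$ via the product decomposition from the normal structure (exactly the computation used in the proof of Lemma~\ref{str-sp-is-ok}). Your write-up is slightly more explicit about unwrapping $\sigma$ and about the cofinality of the $V_x$, but the argument is the same.
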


\begin{proof} $(1)\Rightarrow (2):$ Let $I\in \Loc (S)$ be injective. Then
$\Rd_{\cs}l_*(I)=l_*(I)$. Hence also $\Rd l_*(I)=l_*(I)$. Choose a stratum $T\subset \overline{S}$ and $x\in T$. In the proof of Lemma \ref{str-sp-is-ok} we showed that the stalk $\HH ^i(\Rd l_*(I))_x$ is isomorphic to the cohomology $\HH ^i(L_{x,S},I)$. Hence
$\HH ^i(L_{x,S},I)=0$ for $i>0$, i.e. $I\vert _{L_{x,S}}$ is acyclic for the functor $\Gamma (L_{x,S},-)$.

$(2)\Rightarrow (1):$ Let $A\in \D ^+(\Loc(S))$. Choose a quasi-isomorphism
$A\simeq I^\bullet$, where $I^\bullet$ is a bounded below complex of injective objects in $\Loc(S)$. By definition
$\Rd _\cs l_*( A)=l_*(I^\bullet)$. It suffices to show that an injective object $I\in \Loc (S)$ is acyclic for the functor
\begin{equation}\label{acycl-for-l*}
l_*:\Sh (S)\to \Sh (X)
\end{equation}
For this we again recall the proof of Lemma \ref{str-sp-is-ok}.

Choose a stratum $T\subset \overline{S}$ and $x\in T$. In the proof of Lemma \ref{str-sp-is-ok} we showed that the stalk $\HH ^i(\Rd l_*(I))_x$ is isomorphic to the cohomology $\HH ^i(L_{x,S},I)$. By our assumption the obiect $I$ is acyclic for the functor $\Gamma (L_{x,S},-)$. Hence
$\HH ^i(L_{x,S},I)=0$ for $i>0$. Therefore $I$ is acyclic for the functor \eqref{acycl-for-l*}.
\end{proof}

\begin{para} We want to describe a general class of examples where the equivalent conditions of Theorem \ref{t:equivalent-conditions-for-equivalence-1} are satisfied.
\end{para}

\begin{theorem}\label{our-original-example} Let $(X,\mathcal{S})$ be a stratified space with a normal structure. Assume that each stratum $S$ is a $K(\pi ,1)$ manifold, i.e. its universal covering space is contractible. Assume in addition that for any strata $S,T$, such that $T\subset \overline{S}$,  and any point $x\in T$, each connected component $L_{x,S,i}$ of the manifold $L_{x,S}$ is also a $K(\pi ,1)$-space.

(1) Then the equivalent conditions of Theorem  \ref{t:equivalent-conditions-for-equivalence-1} hold if for each pair of strata $T\subset \overline{S}$, each point $x\in T$ and each connected component
$L_{x,S,i}$ of the manifold $L_{x,S}$ the following holds:

(*) The restriction functor $r\colon \Mod (R\pi _1(S))\ra \Mod (R\pi _1(L_{x,S,i}))$ induced by the natural homomorphism $\pi _1(L_{x,S,i})\to \pi _1(S)$ takes injective objects to ones which are acyclic for the left exact functor of taking invariants
$$\Hom _{R\pi _1(L_{x,S,i})}(R,-)=(-)^{\pi _1(L_{x,S,i})}$$
where $R$ is the augmentation left $R\pi _1(L_{x,S,i})$-module.

(2) The condition (*)  holds in particular in any of the following cases:

(a) The homomorphism $\pi _1(L_{x,S,i})\to \pi _1(S)$ is injective.

(b) The kernel of the homomorphism $\pi _1(L_{x,S,i})\to \pi _1(S)$ is a finite subgroup whose order is prime to the characteristic of $R$.
\end{theorem}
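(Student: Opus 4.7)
The plan is to invoke Corollary~\ref{suf-cond-for-eq} to reduce the equivalence statement of Theorem~\ref{t:equivalent-conditions-for-equivalence-1} to a cohomological vanishing statement for injective locally constant sheaves restricted to links, and then translate that vanishing into group cohomology using the $K(\pi,1)$-hypothesis. Once everything is algebraic, (a) and (b) become standard facts about restriction to subgroups and the Lyndon--Hochschild--Serre spectral sequence.

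More precisely, I would first apply Lemma~\ref{str-sp-is-ok} to place us in the setting of Theorem~\ref{t:equivalent-conditions-for-equivalence-1}. By Remark~\ref{r:stratawise-equiv-1} the stratumwise equivalence \eqref{eq:equi-D+-S-1} is automatic, since each stratum $S$ being $K(\pi,1)$ forces the universal cover $\tilde{S}$ to be contractible, hence acyclic. What remains is the $\sigma$-condition \eqref{eq:equi-D+-sigma-1}, and Corollary~\ref{suf-cond-for-eq} says it holds for the inclusion $l\colon S\to X$ precisely when, for every injective $I\in\Loc(S)$, every $T\subset\overline{S}$ and every $x\in T$, the restriction $I|_{L_{x,S}}$ is $\Gamma(L_{x,S},-)$-acyclic. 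Writing $L_{x,S}=\bigsqcup_i L_{x,S,i}$ as a finite disjoint union of connected components reduces this to one check per component.

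For each component I would fix a basepoint $z\in L_{x,S,i}$ and a path from $z$ to the basepoint of $S$ inside $V_x$, and then use Proposition~\ref{p:fiber-functor-2} to identify $\Loc(S)\sira\Mod(R\pi_1(S))$ and $\Loc(L_{x,S,i})\sira\Mod(R\pi_1(L_{x,S,i}))$. Under these identifications the sheaf restriction becomes the module restriction $r$ along $\pi_1(L_{x,S,i})\to\pi_1(S)$ (this is the standard fiber-functor computation; a different choice of path conjugates $r$ by an element of $\pi_1(S)$, which does not affect acyclicity). Because $L_{x,S,i}$ is a $K(\pi,1)$, Corollary~\ref{c:main-one-stratum} identifies
\begin{equation*}
\HH^q(L_{x,S,i},I|_{L_{x,S,i}})\cong\Ext^q_{R\pi_1(L_{x,S,i})}(R,r(I)),
\end{equation*}
and hypothesis (*) is exactly the vanishing of these $\Ext$-groups for $q>0$. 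This proves (1).

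For (2)(a), when $\pi_1(L_{x,S,i})\hookrightarrow\pi_1(S)$ is injective with image $H$, the group ring $R\pi_1(S)$ is free as a right $RH$-module, so induction $R\pi_1(S)\otimes_{RH}(-)$ is exact and its right adjoint, restriction to $H$, preserves injectives; an injective $RH$-module is acyclic for $\Hom_{RH}(R,-)$, yielding (*). For (2)(b), factor the map as $\pi_1(L_{x,S,i})\twoheadrightarrow H\hookrightarrow\pi_1(S)$ with $K:=\kernel$ finite of order invertible in $R$; case (a) applied to $H\subset\pi_1(S)$ shows that $I|_H$ is injective over $RH$ and hence has trivial higher $H$-cohomology, while the Lyndon--Hochschild--Serre spectral sequence
\begin{equation*}
E_2^{p,q}=\HH^p(H,\HH^q(K,I|_H))\Rightarrow \HH^{p+q}(\pi_1(L_{x,S,i}),I|_H)
\end{equation*}
collapses because $K$ acts trivially on $I|_H$ and $\HH^{>0}(K,M)=0$ for any $R$-module $M$ when $|K|$ is a unit in $R$. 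Hence $\HH^{>0}(\pi_1(L_{x,S,i}),I|_H)\cong\HH^{>0}(H,I|_H)=0$, verifying (*). The only step that requires real care is the identification of the sheaf restriction with the module restriction $r$ under the fiber-functor equivalence, together with the basepoint bookkeeping; every other ingredient is a routine consequence of the results already established in the paper or of classical homological algebra for groups.
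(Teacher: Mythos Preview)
Your proposal is correct and follows essentially the same route as the paper: invoke Lemma~\ref{str-sp-is-ok} and Theorem~\ref{t:main-one-stratum-1} for the stratumwise equivalences, reduce via Corollary~\ref{suf-cond-for-eq} to acyclicity of $I|_{L_{x,S,i}}$, and translate to group cohomology via the fiber-functor equivalence; for (2)(a) both arguments are identical, and for (2)(b) your Lyndon--Hochschild--Serre formulation is just a repackaging of the paper's direct injective-resolution argument (the paper takes an injective resolution of $r(I)$ in $\Mod(RH)$, observes that $(-)^K$ is exact and fixes $r(I)$, and then uses injectivity of $I$ over $RG$ to kill the remaining $G$-cohomology).
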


\begin{proof} Since $(X,\mathcal{S})$ is a stratified space with a normal structure, by Lemma \ref{str-sp-is-ok} it is a \ref{enum:loc-sc}-\ref{enum:loc-sa}-\ref{enum:l_*-preserves-cons}-stratified
  space, so the assumptions of Theorem \ref{t:equivalent-conditions-for-equivalence-1} hold.

(1):
Notice that for each stratum $S\in \mathcal{S}$ its universal covering space $\tilde{S}$ is contractible, hence the realization functor
$$\real :\D ^+(\Loc (S))\ra \D ^+_{\Loc}(S)$$
is an equivalence (Theorem \ref{t:main-one-stratum-1}). So assuming that the condition $(*)$ holds, it suffices to prove that the morphism of functors
\begin{equation}
      \label{eq:equi-D+-sigma-3}
      \sigma \colon \real \circ \Rd_\cs l_*
      \xRa{\eqref{eq:real-e_*-sigma-1}} \Rd l_* \circ
      \real
    \end{equation}
    is a 2-isomorphism for any inclusion $l\colon S\ra X$ of a stratum.

Let $I\in \Loc (S)$ be an injective object. Choose a stratum $T\subset \overline{S}$, a point $x\in T$ and its neighbourhood $x\in V_x\subset X$ as in Definition \ref{def-normal-str}. By Corollary \ref{suf-cond-for-eq} it suffices to prove that the sheaf $I\vert _{L_{x,S}}$ is acyclic for the functor
$$\Gamma (L_{x,S},-)=\Hom (\ul{R}_{L_{x,S}},-) \colon \Sh (L_{x,S})\ra \Mod (R)$$
It suffices to check this for each connected component $L_{x,S,i}$ of $L_{x,S}$.

The natural functor $\D ^+(\Loc (L_{x,S,i}))\ra \D ^+_{\Loc}(\Sh (L_{x,S,i}))$ is an equivalence
(Theorem \ref{t:main-one-stratum-1}). So it is enough to prove that the sheaf $I\vert _{L_{x,S,i}}$ is acyclic for the functor
$$\Hom (\ul{R}_{L_{x,S,i}},-) \colon \Loc(L_{x,S,i})\ra \Mod (R)$$

Let $\beta : L_{x,S,i}\ra S$ be the embedding.
We have the commutative functorial diagram
  \begin{equation*}
    \xymatrix{
      {\Loc(S)} \ar[r]^{\beta ^*} \ar[d]^-{\sim} &
      {\Loc(L_{x,S,i})} \ar[d]^-{\sim}\\
      {\Mod(R\pi _1(S))} \ar[r]^-{r} &
      {\Mod(R \pi _1(L_{x,S,i}))}
    }
  \end{equation*}
where the vertical functors are equivalences (Proposition~\ref{p:fiber-functor-1}). Now the assumption (*) implies that the sheaf $I\vert _{L_{x,S,i}}=\beta ^*(I)$ is acyclic for the functor
$$\Hom (\ul{R}_{L_{x,S,i}},-) \colon \Loc(L_{x,S,i})\ra \Mod (R)$$
This proves (1).

(2)(a): Suppose that the homomorphism $\pi _1(L_{x,S,i})\to \pi _1(S)$ is injective. Then the group ring $R\pi _1(S)$ is a free $R\pi _1(L_{x,S,i})$-module. So the restriction of scalars functor $r\colon \Mod (R\pi _1(S))\ra \Mod (R\pi _1(L_{x,S,i}))$ preserves injectives, as it is the right adjoint to the exact functor $R\pi _1(S)\otimes _{R\pi _1(L_{x,S,i})}(-)$.

(2)(b): Let $K$ be the kernel of the homomorphism
$$H:= \pi_1(L_{x,S,i}) \ra G:= \pi_1(S)$$
and assume that $K$ is finite and its order is prime to the characteristic of the ring $R$.  Let $I\in \Mod (RG)$ be an injective object and $r(I)\in \Mod (RH)$ the corresponding $RH$-module. We need to show that $r(I)$ is acyclic for the functor $\Hom _{RH}(R,-)=(-)^H$ of taking $H$-invariants.
By part (a) we may assume that the homomorphism $H\to G$ is surjective.
Let
\begin{equation}\label{before-k-inv}
0\ra r(I)\ra J^0\ra J^1\ra \cdots
\end{equation}
be an injective resolution in the category $\Mod (RH)$. For any $M\in \Mod (RH)$ we have
$M^H=(M^K)^G$. First notice that the complex
\begin{equation}\label{after-k-inv}
0\ra r(I)^K\ra (J^0)^K\ra (J^1)^K\ra \cdots
\end{equation}
is exact. Indeed, since $K$ is a finite group whose order is prime to $char R$, the surjection of $RK$-modules $RK\to R$ has a splitting (these are free $R$-modules), so the functor $(-)^K=\Hom _{RK}(R,-)\colon \Mod (RK)\ra \Mod (R)$ is exact. Since $r(I)^K=r(I)$, the exact complex \eqref{after-k-inv} is a resolution of the injective $RG$-module $I$ and so it remains exact after taking $G$-invariants.
\end{proof}

\subsection{Examples}

\begin{proposition}\label{toric-var}
Let $X$ be a complex normal separated toric variety.
Consider $X$ as an analytic space with classical topology and the natural stratification $\mathcal{S}$ by torus orbits. The stratified space $(X,\mathcal{S})$ has a normal structure (Definition \ref{def-normal-str}), the conditions of Theorem \ref{our-original-example} are satisfied and hence the functor 
\eqref{eq:equi-D+-X} is an equivalence.
\end{proposition}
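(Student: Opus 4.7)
My plan is to check the three hypotheses needed to apply Theorem~\ref{our-original-example}(1) via its condition~(2)(a): (a) the stratification $(X,\mathcal{S})$ has normal structure; (b) each stratum, and each connected component of each link $L_{x,S}$, is a $K(\pi,1)$-space; (c) for every pair of strata $T \subset \overline{S}$, every $x \in T$ and every component $L_{x,S,i}$, the map $\pi_1(L_{x,S,i}) \to \pi_1(S)$ is injective. Combining (a)--(c), Theorem~\ref{our-original-example} gives the equivalent conditions of Theorem~\ref{t:equivalent-conditions-for-equivalence-1}, whence the realization functor \eqref{eq:equi-D+-X} is an equivalence.

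For~(a), I would apply the local structure theorem for affine toric varieties. Fix a cone $\sigma$ in the defining fan and a point $x \in O_\sigma$. The sublattice $N_\sigma := N \cap \bR\sigma$ of the cocharacter lattice $N$ is saturated, so $N/N_\sigma$ is free and one can pick a splitting $N \cong N_\sigma \oplus N''$. Dualising the corresponding semigroup decomposition gives an isomorphism of affine toric varieties $U_\sigma \cong U_\sigma' \times (\bC^*)^{n - \dim \sigma}$, where $U_\sigma'$ is the affine toric variety associated to $\sigma \subset (N_\sigma)_\bR$ and has the origin as its unique torus-fixed point. Using a one-parameter subgroup $\bR_{>0} \hookrightarrow T'$ generated by a vector in the relative interior of $\sigma$, the retraction by this flow realises $U_\sigma' \cong \Cone(L_x)$ for a compact space $L_x$. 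After shrinking, $V_x \cong \Cone(L_x) \times (V_x \cap O_\sigma)$ with $V_x \cap O_\sigma$ homeomorphic to a ball, and the orbit stratification of $U_\sigma'$ by proper faces $\tau \prec \sigma$ restricts to the required stratification of $L_x$. Since the fan is finite, the global stratification satisfies the finiteness and closure conditions of Definition~\ref{d:stratification-1}.

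For~(b), each stratum $O_\sigma \cong (\bC^*)^{n-\dim \sigma}$ has contractible universal cover $\bC^{n-\dim \sigma}$, hence is $K(\bZ^{n-\dim \sigma}, 1)$. For the link components, set $T = O_\sigma$ and $S = O_\tau$ with $\tau \prec \sigma$ a proper face. Under the identifications above, $L_{x,O_\tau}$ corresponds to $O_\tau'/\bR_{>0}$, where $O_\tau' \cong (\bC^*)^{\dim \sigma - \dim \tau}$ is the orbit of $\tau$ in the slice $U_\sigma'$. Writing $O_\tau' \cong T_c \times \bR^{\dim \sigma - \dim \tau}$ with $T_c = (S^1)^{\dim \sigma - \dim \tau}$ the compact subtorus and the $\bR$-factor the logarithm of absolute values, the $\bR_{>0}$-action is trivial on $T_c$ and a nontrivial translation in one direction on the Euclidean factor. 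Hence each component of $L_{x,O_\tau}$ is homotopy equivalent to $T_c$, and so is $K(\bZ^{\dim \sigma - \dim \tau}, 1)$.

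For~(c), the inclusion $L_{x,O_\tau,i} \hookrightarrow O_\tau$ induces on $\pi_1$ the homomorphism of free abelian groups $N_\sigma/(N_\sigma \cap \bR\tau) \to N/(N \cap \bR\tau)$ arising from $N_\sigma \hookrightarrow N$. Because $\tau \subset \sigma$ gives $N \cap \bR\tau \subset N_\sigma$ and hence $N_\sigma \cap \bR\tau = N \cap \bR\tau$, the splitting $N \cong N_\sigma \oplus N''$ descends to a splitting of quotient lattices $N/(N \cap \bR\tau) \cong N_\sigma/(N_\sigma \cap \bR\tau) \oplus N''$. The map is therefore a direct-summand injection, verifying~(c). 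I expect the main technical work to lie in justifying the cone presentation $U_\sigma' \cong \Cone(L_x)$ together with the identification of each $L_{x,O_\tau,i}$ as a product $T_c \times \bR^{\dim \sigma - \dim \tau - 1}$, particularly when $\sigma$ is not simplicial so that $U_\sigma'$ has non-isolated singularities; here one must check carefully that the $\bR_{>0}$-flow and the compact-torus action are compatible with the orbit stratification so that the combinatorial identification of the $\pi_1$-map as above is correct.
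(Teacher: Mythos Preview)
Your argument is correct and follows the same overall strategy as the paper: use the local product decomposition $U_\sigma \cong U'_\sigma \times O_\sigma$ with $U'_\sigma$ an affine toric variety having a fixed point, then contract $U'_\sigma$ via a one-parameter subgroup in the interior of $\sigma$ to obtain the normal (cone) structure, and finally verify the $K(\pi,1)$ and $\pi_1$-injectivity hypotheses of Theorem~\ref{our-original-example}(2)(a).

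The one place where the paper is slicker than your version is in parts~(b) and~(c). Rather than passing to the quotient $O'_\tau/\bR_{>0}$ and computing fundamental groups explicitly as lattice quotients $N_\sigma/N_\tau \hookrightarrow N/N_\tau$, the paper observes directly that the contraction by the one-parameter subgroup exhibits the link stratum $B_\epsilon \cap S'$ as a deformation retract of the full slice orbit $S'$; hence $B_\epsilon \cap S'$ is connected and the inclusion induces an \emph{isomorphism} $\pi_1(B_\epsilon \cap S') \xrightarrow{\sim} \pi_1(S')$. Since $S = S' \times O$ this map factors as an injection into $\pi_1(S)$. This single homotopy-equivalence observation simultaneously yields connectedness of the link stratum, the $K(\pi,1)$ property, and the injectivity on $\pi_1$, avoiding your separate lattice-splitting computation and the caveat you flag about non-simplicial $\sigma$. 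Your explicit lattice approach has the compensating advantage of identifying the $\pi_1$-map concretely, which could be useful if one later wanted to analyse cases where the kernel is nontrivial but finite (condition~(2)(b) of Theorem~\ref{our-original-example}).
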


\begin{proof} It is known (see for example \cite{CLS}) that a normal separated toric variety comes from a fan. Let $T$ be the corresponding complex torus. Choose an orbit $O\subset X$. Let $St(O)\subset X$ be the {\it star} of $O$, i.e. it is the union of orbits $S\subset X$ such that $O\subset \overline{S}$. Since $X$ comes from a fan, $St(O)$ is an open affine subvariety of $X$ and $O$ is a minimal orbit in $St(O)$. Moreover, there exist subtori $T',T''\subset T$ such that \begin{itemize}
\item $T'$ is the stabilizer of the orbit $O$, and $O\simeq T''$.
\item There exists an affine $T'$-toric variety $X'$ with a fixed point and an isomorphism of $T$-toric varieties $X\simeq X'\times O$.
\end{itemize}
To examine the normal structure of $X$ along the orbit $O$
we first consider the affine $T'$-toric variety $X'$ with a fixed $p$ as above.
Then there exists a 1-parameter subgroup $\lambda _t\subset T'$ which contracts $X'$ to $p$ as $t\to \infty$. This means that if $B_\epsilon \subset X'$ is a small sphere centered at $p$, then for any $T'$-orbit $S'\subset T'$ the intersection $B_\epsilon \cap S'$ is homotopy equivalent to $S'$, and $B_\epsilon \cap S'$ is the link of $p$ in $S'$. In particular, this link $B_\epsilon \cap S'$ is connected, is a $K(\pi ,1)$-space and the corresponding map of fundamental groups
$$\pi _1 (B_\epsilon \cap S')\to \pi _1(S')$$
is an isomorphism.

Let now $S\subset St(O)$ be a $T$-orbit. Then $S=O\times S'$ for a $T'$-orbit $S'\subset X'$ and the link of $O$ in $S$ is equal to the link
$B_\epsilon \cap S'$ of $p$ in $S'$ as above. Hence this link is a $K(\pi ,1)$-space and the corresponding map of the fundamental groups
$$\pi _1 (B_\epsilon \cap S')\to \pi _1(S)$$
is injective. This proves the proposition.
\end{proof}


\begin{remark}
  For $X=\bP^1\bC$ with the Bruhat stratification $X=\bC \sqcup \{pt\}$ the functor   \eqref{eq:equi-D+-X} is not an equivalence. Here the strata are contractible, but the link is homotopic to $S^1$, so the conditions of Theorem \ref{our-original-example} fail.
\end{remark}

\section{Versions of the main theorem}\label{sec-versions}

Assume that $(X,\mathcal{S})$ is a \ref{enum:loc-sc}-\ref{enum:loc-sa}-\ref{enum:l_*-preserves-cons}-stratified
  space and we are in the situation of Theorem \ref{t:equivalent-conditions-for-equivalence-1}
  i.e. the natural functor
    \begin{equation}
      \label{eq:equi-D+-X-1}
      \real \colon \D^+(\Cons(X, \mathcal{S})) \ra
      \D_\mathcal{S}^+(X)
    \end{equation}
    is an equivalence.

\begin{definition} \label{ft}  Consider the full subcategory $\Cons _{ft}(X, \mathcal{S})\subset \Cons(X, \mathcal{S})$ consisting of constructible sheaves of {\bf finite type}, i.e. sheaves $F$ such that the stalk $F_x$ is a finitely generated $R$-module for any $x\in X$.
\end{definition}

\begin{lemma} Assume that the ring $R$ is left Noetherian. Then $\Cons _{ft}(X, \mathcal{S})$ is an abelian subcategory of $\Cons(X, \mathcal{S})$ closed under extensions and direct summands.
\end{lemma}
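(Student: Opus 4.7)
The strategy is to reduce every assertion to a stalkwise statement in $\Mod(R)$ and then invoke the fact that, since $R$ is left Noetherian, the full subcategory $\mod(R)\subset\Mod(R)$ of finitely generated $R$-modules is a Serre subcategory which is moreover closed under direct summands. The key input on the sheaf side is Proposition~\ref{p:loc-sc-grothendieck-1}: $\Cons(X,\mathcal{S})$ is a weak Serre subcategory of $\Sh(X)$, so its kernels, cokernels and extensions may be computed in $\Sh(X)$ and remain constructible.

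Concretely, for any point $x\in X$ the stalk functor $(-)_x\colon\Sh(X)\ra\Mod(R)$ is exact and commutes with the formation of kernels, cokernels, direct sums, and the short exact sequences appearing in extensions. Hence, given a morphism $f\colon F\ra G$ in $\Cons_{ft}(X,\mathcal{S})$, one has $(\kernel f)_x=\kernel(f_x)$ and $(\cokernel f)_x=\cokernel(f_x)$. Since $F_x$ is finitely generated and $R$ is left Noetherian, $\kernel(f_x)\subset F_x$ is finitely generated; and $\cokernel(f_x)$ is a quotient of the finitely generated module $G_x$, hence finitely generated. This shows that $\Cons_{ft}(X,\mathcal{S})$ is closed under kernels and cokernels computed in $\Cons(X,\mathcal{S})$, so the inclusion into $\Cons(X,\mathcal{S})$ is exact and $\Cons_{ft}(X,\mathcal{S})$ is an abelian subcategory.

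For closure under extensions, suppose $0\ra F'\ra F\ra F''\ra 0$ is an exact sequence in $\Cons(X,\mathcal{S})$ with $F',F''\in\Cons_{ft}(X,\mathcal{S})$. Taking stalks yields the short exact sequence $0\ra F'_x\ra F_x\ra F''_x\ra 0$ of $R$-modules, and an extension of two finitely generated modules is finitely generated; hence $F\in\Cons_{ft}(X,\mathcal{S})$. For closure under direct summands, if $F\cong F'\oplus F''$ in $\Cons(X,\mathcal{S})$ with $F\in\Cons_{ft}(X,\mathcal{S})$, then $F_x\cong F'_x\oplus F''_x$ is finitely generated, and over a Noetherian ring any direct summand of a finitely generated module is finitely generated, so $F',F''\in\Cons_{ft}(X,\mathcal{S})$.

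There is no serious obstacle here; the only point worth emphasizing is that the Noetherian hypothesis is used exactly to guarantee that submodules of finitely generated modules are finitely generated, which is what makes the kernel and the direct-summand assertions go through. Without Noetherianness the cokernel and extension parts would still hold, but the kernel and summand parts could fail.
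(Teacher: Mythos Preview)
Your proof is correct and follows essentially the same stalkwise reduction as the paper's proof, which likewise checks that kernels, cokernels, extensions, and summands have finitely generated stalks. One minor inaccuracy in your closing commentary: a direct summand of a finitely generated module is always finitely generated (it is a quotient), so the Noetherian hypothesis is needed only for the kernel assertion, not for summands.
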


\begin{proof} Given a morphism $\gamma :F\to G$ in $\Cons _{ft}(X, \mathcal{S})$ the kernel and the cokernel of the morphism of stalks $\gamma _x :F_x\to G_x$ are finitely generated $R$-modules, hence $ker (\gamma ),coker (\gamma )\in \Cons _{ft}(X, \mathcal{S})$. Similarly, it is clear that $\Cons _{ft}(X, \mathcal{S})$ is closed under extensions in $\Cons(X, \mathcal{S})$ and direct summands.
\end{proof}

\begin{para} A constructible sheaf $F\in \Cons(X, \mathcal{S})$ is of finite type if and only if for any stratum $S\in \mathcal{S}$ the $R\pi _1(S)$ module corresponding to the local system $F\vert _S$ is finitely generated as an $R$-module.
\end{para}

\begin{definition} Assume that $R$ is left Noetherian. Denote by $\D ^+_{ft}(\Cons (X,\mathcal{S}))$ the thick triangulated subcategory of $\D ^+(\Cons (X,\mathcal{S}))$ with cohomology sheaves in $\Cons _{ft}(X, \mathcal{S})$.
\end{definition}

The following corollary is obvious.

\begin{corollary} Assume that $R$ is left Noetherian and the functor
\eqref{eq:equi-D+-X-1} is an equivalence. Then it induces the equivalence
\begin{equation}\label{ind-equiv}
\real \colon \D ^+_{ft}(\Cons (X,\mathcal{S}))\ra \D ^+_{\Cons _{ft}(X,\mathcal{S})}(X)
\end{equation}
\end{corollary}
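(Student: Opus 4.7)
The plan is to argue that the equivalence $\real$ respects the finite-type condition termwise on cohomology sheaves, after which both full faithfulness and essential surjectivity of the restricted functor are automatic. The whole proof really reduces to a bookkeeping check about cohomology under an exact inclusion.

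First I would invoke Proposition~\ref{p:loc-sc-grothendieck-1}: since $(X,\mathcal{S})$ is \ref{enum:loc-sc}-\ref{enum:loc-sa}, $\Cons(X,\mathcal{S})$ is a weak Serre subcategory of $\Sh(X)$ and in particular the inclusion $\iota\colon \Cons(X,\mathcal{S})\hookrightarrow \Sh(X)$ is exact. Exactness of $\iota$ means that for any $A\in \C^+(\Cons(X,\mathcal{S}))$ one has a canonical identification $\iota(\HH^p(A))=\HH^p(\iota A)$ of sheaves, where the left-hand cohomology is taken in $\Cons(X,\mathcal{S})$ and the right-hand one in $\Sh(X)$. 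Passing to the derived level, and using that $\real$ is by construction induced by $\iota$, this gives for all $A\in \D^+(\Cons(X,\mathcal{S}))$ and $p\in\bZ$ a natural identification $\HH^p(\real(A))\cong \HH^p(A)$ inside $\Cons(X,\mathcal{S})\subset \Sh(X)$.

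Second, I would translate this into a matching of the two finite-type conditions. By the definitions given just before the statement, $A\in \D^+_{ft}(\Cons(X,\mathcal{S}))$ means $\HH^p(A)\in \Cons_{ft}(X,\mathcal{S})$ for every $p$, and $F\in \D^+_{\Cons_{ft}(X,\mathcal{S})}(X)$ means $\HH^p(F)\in \Cons_{ft}(X,\mathcal{S})$ for every $p$. The identification above therefore shows that, for $A\in \D^+(\Cons(X,\mathcal{S}))$, one has $A\in \D^+_{ft}(\Cons(X,\mathcal{S}))$ if and only if $\real(A)\in \D^+_{\Cons_{ft}(X,\mathcal{S})}(X)$.

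Finally I would combine this with the hypothesis that
\[
\real\colon \D^+(\Cons(X,\mathcal{S}))\sira \D^+_\mathcal{S}(X)
\]
is an equivalence. Full faithfulness of the restriction to the strictly full subcategory $\D^+_{ft}(\Cons(X,\mathcal{S}))$ is inherited from full faithfulness of $\real$. For essential surjectivity, take $F\in \D^+_{\Cons_{ft}(X,\mathcal{S})}(X)$; since $\Cons_{ft}(X,\mathcal{S})\subset \Cons(X,\mathcal{S})$ we also have $F\in \D^+_\mathcal{S}(X)$, so the ambient equivalence yields $A\in \D^+(\Cons(X,\mathcal{S}))$ with $\real(A)\cong F$, and the matching of cohomology conditions in the previous step forces $A\in \D^+_{ft}(\Cons(X,\mathcal{S}))$. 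No genuine obstacle arises — the only thing to watch is not to confuse cohomology computed in $\Cons(X,\mathcal{S})$ with cohomology computed in $\Sh(X)$, and exactness of $\iota$ takes care of exactly that point.
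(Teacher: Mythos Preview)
Your argument is correct and matches the paper's approach: the paper simply states that the corollary is obvious and gives no further proof, and what you have written is precisely the routine unpacking of why it is obvious --- the exact inclusion $\Cons(X,\mathcal{S})\hookrightarrow\Sh(X)$ identifies cohomology on both sides, so the finite-type conditions match and the ambient equivalence restricts.
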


One then may ask the following natural question.

\medskip
\noindent{\bf Question (*).} Suppose that $R$ is left Noetherian. When is the natural functor
\begin{equation} \label{real-bounded-ft}
\D ^b(\Cons _{ft}(X,\mathcal{S}))\ra \D ^b_{\Cons _{ft}(X,\mathcal{S})}(X)
\end{equation}
an equivalence?
This is an interesting and subtle question, that we cannot answer in general. The problem is that even if the functor \eqref{eq:equi-D+-X-1} is an equivalence, we do not know when the functor
\begin{equation}\label{basic-eq-q}
\D ^b(\Cons _{ft}(X,\mathcal{S}))\ra \D ^b_{ft}(\Cons (X,\mathcal{S}))
\end{equation}
is an equivalence. In fact it is not clear even in the case of one stratum in general. Below we give a positive answer to the question (*) in some special cases. We start with a general result.

\begin{proposition}\label{when-canon-f-is-eq} Let $\mathcal{A}$ be an abelian category and $\mathcal{B}\subset \mathcal{A}$ its strictly full abelian subcategory
closed under extensions. Assume that the following condition holds:
For any $M\in \mathcal{A}$ there exists a subobject $N\subset M$ such that

(i) $M/N\in \mathcal{B}$;

(ii) $N$ has no nonzero subobjects which belong to $\mathcal{B}$.

\noindent Then the canonical functor $\Psi :\D ^b (\mathcal{B})\to \D ^b_{\mathcal{B}}(\mathcal{A})$ is an equivalence.
\end{proposition}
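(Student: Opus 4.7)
The plan is to show that $\Psi\colon\D^b(\mathcal{B})\to\D^b_{\mathcal{B}}(\mathcal{A})$ is both fully faithful and essentially surjective. Write $\mathcal{T}\subset\mathcal{A}$ for the class of objects admitting no nonzero subobject in $\mathcal{B}$; the hypothesis then reads that every $M\in\mathcal{A}$ fits into a short exact sequence $0\to N\to M\to M/N\to 0$ with $N\in\mathcal{T}$ and $M/N\in\mathcal{B}$, and in particular hypothesis~(ii) yields the Hom-vanishing $\Hom_{\mathcal{A}}(B,N)=0$ for all $B\in\mathcal{B}$ and $N\in\mathcal{T}$. By a standard d\'evissage, full faithfulness of $\Psi$ amounts to the statement that the natural map $\Ext^n_{\mathcal{B}}(B,B')\to\Ext^n_{\mathcal{A}}(B,B')$ is bijective for every $B,B'\in\mathcal{B}$ and every $n\ge 0$. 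The cases $n=0$ and $n=1$ are immediate because $\mathcal{B}$ is a full abelian subcategory of $\mathcal{A}$ closed under extensions.

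For $n\ge 2$ the plan is to show that any Yoneda $n$-extension $\xi\colon 0\to B'\to M_1\to\cdots\to M_n\to B\to 0$ in $\mathcal{A}$ is Yoneda-equivalent to one whose middle terms all lie in $\mathcal{B}$, and to treat injectivity of the comparison map by the analogous reduction applied to Yoneda equivalences between extensions in $\mathcal{B}$. The reduction is iterative, processing $M_n,M_{n-1},\dots,M_1$ in turn: at the step treating $M_i$ (with $M_{i+1},\dots,M_n$ already in $\mathcal{B}$), one applies the hypothesis to the image $K_i=\image(M_i\to M_{i+1})$, obtaining a short exact sequence $0\to T_i\to K_i\to K_i/T_i\to 0$ with $T_i\in\mathcal{T}$ and $K_i/T_i\in\mathcal{B}$; one then replaces $M_i$ by the quotient of $M_i$ by the preimage of $T_i$, and simultaneously adjusts $M_{i-1}$ via an appropriate pullback so as to restore exactness at the modified term. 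The main obstacle is precisely this step: one must verify that the local modifications glue into a genuine morphism of Yoneda $n$-extensions inducing the identity on the endpoints $B'$ and $B$, and that the Yoneda class of $\xi$ has not changed in $\Ext^n_{\mathcal{A}}(B,B')$. The Hom-vanishing $\Hom_{\mathcal{A}}(\mathcal{B},\mathcal{T})=0$ coming from hypothesis~(ii) rigidifies the construction, forcing the various intermediate morphisms to match up uniquely and preventing unwanted classes from appearing when one passes between successive modifications.

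For essential surjectivity the plan is to induct on the cohomological amplitude of a complex $M^\bullet\in\D^b_{\mathcal{B}}(\mathcal{A})$. The amplitude-zero case is trivial, since then $M^\bullet\cong \HH^k(M^\bullet)[-k]$ with $\HH^k(M^\bullet)\in\mathcal{B}$. For the inductive step, the canonical truncation triangle
\begin{equation*}
\tau_{\le k-1}M^\bullet\to M^\bullet\to \HH^k(M^\bullet)[-k]\to (\tau_{\le k-1}M^\bullet)[1]
\end{equation*}
presents $M^\bullet$ as the cone of its connecting morphism; both outer terms lie in the essential image of $\Psi$ by the inductive hypothesis, and the connecting morphism, which a priori lives in a $\Hom$-group in $\D^b(\mathcal{A})$, lifts uniquely through $\Psi$ by the full faithfulness established in the first part. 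The cone of the lifted morphism formed in $\D^b(\mathcal{B})$ is then carried by $\Psi$ to an object isomorphic to $M^\bullet$, completing the proof.
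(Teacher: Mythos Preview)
Your overall strategy—prove full faithfulness via Yoneda $\Ext$, then deduce essential surjectivity from truncation triangles—is a reasonable alternative to the paper's, but the iterative reduction you describe for $\Ext^n_{\mathcal{B}}\to\Ext^n_{\mathcal{A}}$ has a concrete gap. Once $M_{i+1},\dots,M_n$ lie in $\mathcal{B}$ and the sequence is still exact there, the image $K_i=\image(M_i\to M_{i+1})$ equals $\ker(M_{i+1}\to M_{i+2})$, which is the kernel of a morphism between objects of $\mathcal{B}$ and hence already lies in $\mathcal{B}$. Applying the hypothesis to $K_i$ then yields nothing: the choice $T_i=0$ is admissible, and with it ``the quotient of $M_i$ by the preimage of $T_i$'' is $M_i/\ker(M_i\to M_{i+1})=K_i$, so you have collapsed the extension at position $i$ and the resulting sequence is no longer exact at $M_{i-1}$ (nor visibly Yoneda-equivalent to the original). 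If instead you force a nonzero $T_i\subset K_i\subset M_{i+1}$, then passing to the quotient of $M_i$ forces you to replace $M_{i+1}$ by $M_{i+1}/T_i$, which need not lie in $\mathcal{B}$—undoing the previous step. The asymmetry of the hypothesis (a subobject with $\mathcal{B}$-quotient, not the dual) makes a right-to-left sweep awkward; what one actually needs is $P\cap\ker(d)=0$, and that requires the \emph{previous} term to already lie in $\mathcal{B}$.

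The paper's proof runs in the opposite direction and does both jobs with a single construction. Given $A^\bullet\in\D^b_{\mathcal{B}}(\mathcal{A})$, locate the leftmost degree $t$ with $A^t\notin\mathcal{B}$; then $A^{t-1}\in\mathcal{B}$ and $\HH^t(A^\bullet)\in\mathcal{B}$ force $\ker(d^t)\in\mathcal{B}$. Apply the hypothesis to $A^t$ itself to obtain $P\subset A^t$ with $A^t/P\in\mathcal{B}$ and $P\in\mathcal{T}$; then $P\cap\ker(d^t)$ is the kernel of the $\mathcal{B}$-morphism $\ker(d^t)\to A^t/P$, so it lies in $\mathcal{B}$, and being a subobject of $P\in\mathcal{T}$ it must vanish. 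Hence $P$ injects into $A^{t+1}$, and quotienting $A^\bullet$ by the acyclic two-term subcomplex $P\xrightarrow{\sim}d^t(P)$ yields a surjective quasi-isomorphism onto a complex whose terms in degrees $\le t$ lie in $\mathcal{B}$. Iterating produces a genuine morphism of complexes $A^\bullet\to B^\bullet$ with $B^\bullet\in\C^b(\mathcal{B})$; this gives essential surjectivity directly, and full faithfulness by applying it to the middle term of any roof $D^\bullet\to A^\bullet\xleftarrow{\sim}C^\bullet$ with $C^\bullet,D^\bullet\in\C^b(\mathcal{B})$. Your Yoneda argument can be repaired by running this same left-to-right sweep on the complex $M_1\to\cdots\to M_n$, but then it becomes essentially the paper's proof, and your separate (and unspecified) treatment of injectivity becomes unnecessary.
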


\begin{proof} \underline{$\Psi$ is essentially surjective:} Let
$$A^\bullet :=0\ra A^i\stackrel{d^i}{\ra} A^{i+1}\ra \cdots
\stackrel{d^{n-1}}{\ra} A^n\ra 0$$
be a complex in $\D ^b_{\mathcal{B}}(\mathcal{A})$. Let $t$ be the lowest index such that $A^t\notin \mathcal{B}$. Choose a subobject $P\subset A^t$ such that $A^t/P\in \mathcal{B}$ and $P$ has no nonzero subobjects from $\mathcal{B}$. We claim that $P\cap ker(d^t)=0$. Indeed, since $H^t(A\bullet ), A^{t-1}\in \mathcal{B}$, we have $ker (d^t)\in \mathcal{B}$, so $P\cap ker(d^t)=0$. Therefore $A\bullet $ contains the acyclic complex
$$\tilde{P}:=P\stackrel{\sim}{\ra} d^t(P)$$
and the components of $A^\bullet /\tilde{P}$ with index $\leq t$ belong to $\mathcal{B}$. proceeding by induction we eventually find a quasi-isomorphism $A^\bullet \ra B^\bullet$, where $B^\bullet \in \C^b(\mathcal{B})$.

\underline{$\Psi$ is full and faithful:} Let $C^\bullet ,D^\bullet \in \D ^b(\mathcal{B})$. A morphism $\Psi (D^\bullet)\to \Psi (C^\bullet)$ is represented by a diagram of morphisms of complexes in $\C^b(\mathcal{A})$
$$D^\bullet \ra A^\bullet \stackrel{s}{\la}C^\bullet$$
where $s$ is a quasi-isomorphism. As we showed above, there is a complex $B^\bullet \in \C^b(\mathcal{B})$ and a morphism of complexes $A^\bullet \ra B^\bullet$ which is a quasi-isomorphism. This proves that $\Psi$ is full and faithful.
\end{proof}

\begin{para} Let $A$ be a Noetherian commutative algebra over a field. Let $J\subset A$ be an ideal. Put $B:=A/J$ and consider the abelian categories $\mathcal{A}:=\Mod (A)$ and its abelian subcategory
$\mathcal{A}_{fg}\subset \mathcal{A}$ of finitely generated modules.
Similarly we have the abelian subcategory $\mathcal{B}_{fg}\subset \mathcal{A}$ of finitely generated $B$-modules.
\end{para}

\begin{corollary} \label{cor-for-can-ft} The canonical functor $\D ^b(\mathcal{B}_{fg})\to
\D ^b_{\mathcal{B}_{fg}}(\mathcal{A})$ is an equivalence.
\end{corollary}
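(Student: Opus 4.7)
The plan is to deduce the corollary from the immediately preceding Proposition~\ref{when-canon-f-is-eq}, applied to the pair $(\mathcal{A}, \mathcal{B}_{fg})$. Two things must be verified: (a) that $\mathcal{B}_{fg}$ is a strictly full abelian subcategory of $\mathcal{A}$ closed under extensions, and (b) that every $M \in \mathcal{A}$ admits a subobject $N \subset M$ with $M/N \in \mathcal{B}_{fg}$ and with no nonzero $\mathcal{B}_{fg}$-subobject.

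Part (a) should be essentially routine: since $B = A/J$ is Noetherian as a quotient of the Noetherian ring $A$, kernels and cokernels of morphisms of finitely generated $B$-modules are again in $\mathcal{B}_{fg}$, and these coincide with the kernels and cokernels computed in $\mathcal{A}$, so the inclusion is exact. For (b), given $M \in \Mod(A)$ the natural candidate is to take $N$ containing a sufficiently high power $J^n M$, so that (after possibly enlarging within the window between $J^n M$ and $JM$) the quotient $M/N$ lies in $\mathcal{B}_{fg}$, while simultaneously arranging that no nonzero element of $N$ is killed by $J$, so that any embedding of a $B$-module into $N$ is forced to be zero.

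When $M$ is finitely generated, the Artin--Rees lemma applied to the $J$-torsion submodule $\{m \in M : J^k m = 0 \text{ for some } k\} \subset M$ produces an integer $k$ with $J^n M \cap (J\text{-torsion of } M) = J^{n-k}(J^k M \cap (J\text{-torsion}))$ for $n \ge k$; combined with the fact that the $J$-torsion of a finitely generated $A$-module is annihilated by a fixed power of $J$, the intersection is zero for $n$ sufficiently large, and the resulting $N = J^n M$ has the desired properties after one further quotient step to land in $\mathcal{B}_{fg}$ rather than in the larger category of finitely generated $J$-nilpotent $A$-modules.

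The hard part will be treating arbitrary (not necessarily finitely generated) $M \in \Mod(A)$: here the $J$-torsion of $M$ need not be annihilated by a fixed power of $J$ and Artin--Rees does not apply directly, and the closure of $\mathcal{B}_{fg}$ under extensions inside $\mathcal{A}$ is itself subtle, because an extension of two $B$-modules in $\Mod(A)$ is only guaranteed to be annihilated by $J^2$. The strategy will be to reduce to the finitely generated case by writing $M$ as a filtered colimit of its finitely generated submodules and exploiting the Noetherianity of $A$, and if necessary to replace $\mathcal{B}_{fg}$ by its extension closure inside $\mathcal{A}$ for the abstract input into Proposition~\ref{when-canon-f-is-eq}, checking afterward that the resulting derived category agrees with $\D^b(\mathcal{B}_{fg})$. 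Once these points are settled, the proposition directly yields the claimed equivalence.
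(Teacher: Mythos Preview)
There is a genuine gap. You propose to apply Proposition~\ref{when-canon-f-is-eq} to the pair $\mathcal{B}_{fg}\subset\mathcal{A}=\Mod(A)$, but the hypothesis of that proposition fails for this pair. Take $M=(A/J)^{(I)}$ for an infinite set $I$. Any submodule $N\subset M$ with no nonzero $\mathcal{B}_{fg}$-subobject must contain no nonzero element killed by $J$ (such an element would generate a one-dimensional $B$-submodule); but every element of $M$ is killed by $J$, so $N=0$, and then $M/N=M$ is not finitely generated. Your proposed fix of passing to the extension closure $\overline{\mathcal{B}_{fg}}$ (finitely generated $J$-nilpotent modules) does not help: the same $M$ still forces $N=0$ and $M/N\notin\overline{\mathcal{B}_{fg}}$. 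Writing $M$ as a filtered colimit of finitely generated submodules does not produce the required single submodule $N$ either; the proposition demands this for each object of the ambient category, and for non-finitely-generated $M$ there is simply no such $N$.

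The paper avoids this by a preliminary reduction you do not make. It first invokes the standard equivalence $\D^b(\mathcal{A}_{fg})\xrightarrow{\sim}\D^b_{\mathcal{A}_{fg}}(\mathcal{A})$ (valid because $A$ is Noetherian), which restricts to an equivalence $\D^b_{\mathcal{B}_{fg}}(\mathcal{A}_{fg})\xrightarrow{\sim}\D^b_{\mathcal{B}_{fg}}(\mathcal{A})$. Only then does it apply Proposition~\ref{when-canon-f-is-eq}, but to the pair $\mathcal{B}_{fg}\subset\mathcal{A}_{fg}$. Now every $M$ is finitely generated, its $J$-power-torsion submodule is finitely generated and hence killed by some $J^n$, and one may take $N=J^nM$: then $M/N$ is finitely generated and $J$-nilpotent, while $N$ has no nonzero $J$-torsion (hence no nonzero $\mathcal{B}_{fg}$-subobject). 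Your observation that $\mathcal{B}_{fg}$ is not literally extension-closed (an extension of two $B$-modules is only killed by $J^2$) is correct and applies equally to the paper's argument; it is handled by noting that $M/J^nM$ has a finite filtration with subquotients in $\mathcal{B}_{fg}$, so the induction in the proof of Proposition~\ref{when-canon-f-is-eq} still goes through (equivalently, one first proves the statement for $\overline{\mathcal{B}_{fg}}\subset\mathcal{A}_{fg}$ and then identifies $\D^b(\mathcal{B}_{fg})$ with $\D^b(\overline{\mathcal{B}_{fg}})$ by d\'evissage). That wrinkle is minor; the essential missing step in your plan is the reduction to $\mathcal{A}_{fg}$.
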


\begin{proof} First recall the well-known fact that the canonical functor
$$\D ^b(\mathcal{A}_{fg})\ra \D ^b_{\mathcal{A}_{fg}}(\mathcal {A}).$$
is an equivalence. Clearly it induces the equivalence
$$\D ^b_{\mathcal{B}_{fg}}(\mathcal{A}_{fg})\ra \D ^b_{\mathcal{B}_{fg}}(\mathcal {A}).$$
So it suffices to prove that the canonical functor
$$\Psi :\D ^b(\mathcal{B}_{fg})\to \D ^b_{\mathcal{B}_{fg}}(\mathcal{A}_{fg})$$
is an equivalence. This follows from Proposition \ref{when-canon-f-is-eq}. Indeed, for any finitely generated $A$-module $M$ its $J$-torsion submodule $M^J$ is finitely generated, hence for $n>>0$ we have $J^nM^J=0$ and so can take $N:=J^nM$.
\end{proof}

\begin{remark} Corollary \ref{cor-for-can-ft} can be extended to some noncommutative Noetherian rings (Larsen-Lunts "Lie algebra cohomology").
\end{remark}

\begin{para} A version of Corollary \ref{cor-for-can-ft} is the following.
Let $\mathcal{A}$ and $\mathcal{A}_{fg}$ be as in Corollary \ref{cor-for-can-ft}. Let $\mathcal{C}\subset \mathcal{A}_{fg}$ be the abelian subcategory of modules with zero-dimensional support.
\end{para}

\begin{corollary} The canonical functor $\D ^b(\mathcal{C})\to
\D ^b_{\mathcal{C}}(\mathcal{A})$ is an equivalence.
\end{corollary}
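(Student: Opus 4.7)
The plan is to mirror the argument of Corollary~\ref{cor-for-can-ft}. The well-known equivalence $\D^b(\mathcal{A}_{fg}) \sira \D^b_{\mathcal{A}_{fg}}(\mathcal{A})$ recalled there restricts to an equivalence
\[
\D^b_\mathcal{C}(\mathcal{A}_{fg}) \sira \D^b_\mathcal{C}(\mathcal{A})
\]
since $\mathcal{C} \subset \mathcal{A}_{fg}$, so it is enough to show that the canonical functor
\[
\Psi \colon \D^b(\mathcal{C}) \ra \D^b_\mathcal{C}(\mathcal{A}_{fg})
\]
is an equivalence. I will deduce this from Proposition~\ref{when-canon-f-is-eq} applied to $\mathcal{A}_{fg}$ and its strictly full abelian subcategory $\mathcal{C}$. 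Observe first that $\mathcal{C}$ is closed under extensions in $\mathcal{A}_{fg}$, since the support of a module equals the union of the supports of any submodule and the corresponding quotient.

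The only real work is to verify the hypothesis of Proposition~\ref{when-canon-f-is-eq}: for every finitely generated $A$-module $M$ I have to produce a submodule $N \subset M$ with $M/N \in \mathcal{C}$ such that $N$ contains no nonzero submodule in $\mathcal{C}$. I will do this by primary decomposition. Since $A$ is Noetherian and $M$ is finitely generated, there is a primary decomposition $0 = \bigcap_{i=1}^{r} Q_i$ with $Q_i \subset M$ a $\mathfrak{p}_i$-primary submodule. Reindex so that $\mathfrak{p}_1, \dots, \mathfrak{p}_s$ are exactly the maximal ideals among the $\mathfrak{p}_i$, and set
\[
N := \bigcap_{i=1}^{s} Q_i.
\]

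For condition (i), the canonical map $M/N \hra \bigoplus_{i=1}^{s} M/Q_i$ is injective, and each summand $M/Q_i$ is $\mathfrak{p}_i$-primary with $\mathfrak{p}_i$ maximal, hence has support equal to $\{\mathfrak{p}_i\}$, which is zero-dimensional; therefore $M/N \in \mathcal{C}$. For condition (ii), $\operatorname{Ass}(N) \subset \{\mathfrak{p}_{s+1}, \dots, \mathfrak{p}_r\}$ consists of non-maximal primes only, so any nonzero submodule $N' \subset N$ satisfies $\operatorname{Ass}(N') \subset \operatorname{Ass}(N)$ with no maximal members; but any nonzero module in $\mathcal{C}$ has $\supp$ a nonempty finite set of maximal ideals, and the minimal elements of the support are always associated primes, a contradiction. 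The main (rather mild) obstacle is organising this primary decomposition argument correctly; once $N$ has been constructed, everything else is formal via Proposition~\ref{when-canon-f-is-eq}.
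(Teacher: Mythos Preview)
Your proof is correct and follows exactly the outline the paper has in mind: the paper's own proof reads in its entirety ``Similar to that of Corollary~\ref{cor-for-can-ft}'', i.e.\ reduce to $\mathcal{A}_{fg}$ via the standard equivalence and then invoke Proposition~\ref{when-canon-f-is-eq}. Your primary-decomposition construction of $N$ is a clean way to realise the hypothesis of that proposition; the more literal parallel to the earlier corollary would be to take the ideal $\mathfrak{a}$ cutting out the maximal associated primes of $M$ and set $N=\mathfrak{a}^{n}M$ for $n\gg 0$ (Artin--Rees then forces $\Gamma_{0}(N)=0$), but your argument avoids Artin--Rees and is just as good.
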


\begin{proof} Similar to that of Corollary \ref{cor-for-can-ft}.
\end{proof}

\subsection{When is the functor $\D ^b(\Cons _{ft}(X,\mathcal{S}))\ra \D ^b_{ft}(\Cons (X,\mathcal{S}))$ an equivalence?}

\begin{definition} \label{cond!} Let $A$ be an algebra over a field $k$. An $A$-module $M$ is of {\bf finite type} (resp. {\bf ind-finite type}) if it is finite dimensional over $k$ (resp. if it is a union of modules of finite type). Let
$$\Mod _{ft}(A)\subset \Mod _{ift}(A)\subset \Mod (A)$$
be the corresponding full subcategories.
We say that the algebra $A$ satisfies the condition (!) if any $M\in \Mod _{ift} (A)$ is a submodule of an injective $A$-module $I$ which belongs to
$\Mod _{ift}(A)$.
\end{definition}

\begin{theorem} \label{when eq} Assume that the coefficient ring $R$ is a field $k$.

Let $(X,\mathcal{S})$ be a stratified space with normal structure (Definition \ref{def-normal-str}). Assume that for any stratum $S\in \mathcal{S}$ the $k$-algebra $k\pi _1(S)$ satisfies the condition (!) of Definition \ref{cond!}. Then the natural functor
\begin{equation}\label{f-between-const}
\D ^b(\Cons _{ft}(X,\mathcal{S}))\ra \D ^b_{ft}(\Cons (X,\mathcal{S}))
\end{equation}
is an equivalence. (The objects in $\Cons _{ft}(X,\mathcal{S})$ are constructible sheaves with $k$-finite dimensional stalks.)
\end{theorem}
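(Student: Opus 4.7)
The plan is to reduce the equivalence \eqref{f-between-const} to a comparison of Yoneda $\Ext$ groups, handled through an intermediate subcategory of ind-finite-type sheaves. It suffices to show that for all $M, N \in \Cons_{ft}(X, \mathcal{S})$ and $p \geq 0$, the natural map $\Ext^p_{\Cons_{ft}(X,\mathcal{S})}(M, N) \to \Ext^p_{\Cons(X,\mathcal{S})}(M, N)$ is bijective. Indeed, standard d\'evissage in both arguments via truncation triangles then yields full faithfulness of the functor in question, and essential surjectivity follows by induction on cohomological amplitude: for $M^\bullet \in \D^b_{ft}(\Cons(X,\mathcal{S}))$ with highest cohomology degree $b$, the boundary morphism $H^b(M^\bullet)[-b] \to \tau_{<b} M^\bullet[1]$ can be lifted to $\D^b(\Cons_{ft}(X,\mathcal{S}))$ via full faithfulness, and its cone in $\D^b(\Cons_{ft}(X,\mathcal{S}))$ is quasi-isomorphic to $M^\bullet$.

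Introduce the intermediate full subcategory $\Cons_{ift}(X,\mathcal{S}) \subset \Cons(X,\mathcal{S})$ of ind-finite-type constructible sheaves, namely those $F$ for which $F|_S$ corresponds to a locally finite $k\pi_1(S)$-module under $\Loc(S) \cong \Mod(k\pi_1(S))$ for every stratum $S$. Factor the comparison as
\[
\Ext^p_{\Cons_{ft}}(M,N) \to \Ext^p_{\Cons_{ift}}(M,N) \to \Ext^p_{\Cons}(M,N).
\]
For the right-hand map, condition~(!) is used to construct injective resolutions of any $N \in \Cons_{ift}$ by objects that are simultaneously in $\Cons_{ift}$ and injective in $\Cons(X,\mathcal{S})$: embed each $N|_S$ into an ind-finite-type injective $I_S \in \Loc(S)$ via~(!), and set $I := \bigoplus_S l_{S*} I_S$ (finite sum, since $|\mathcal{S}| < \infty$), where $l_S \colon S \to X$ are the stratum inclusions. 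Since each $l_S^* \colon \Cons(X,\mathcal{S}) \to \Loc(S)$ is exact, its right adjoint $l_{S*}$ preserves injectives, so $I$ is injective in $\Cons(X,\mathcal{S})$; moreover the map $N \to I$ is injective by a pointwise check at $x \in T$, where the component $N \to l_{T*} I_T$ induces the injection $N_x \hookrightarrow (I_T)_x$. Iterating yields the desired resolution, which computes both $\Ext^p_{\Cons_{ift}}$ and $\Ext^p_{\Cons}$ and proves their equality on $\Cons_{ift}$.

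The left-hand map is bijective by Yoneda refinement: given a $p$-fold extension $0 \to N \to E_1 \to \cdots \to E_p \to M \to 0$ in $\Cons_{ift}$ with $M, N \in \Cons_{ft}$, work from right to left, at each step replacing $E_j$ by the subsheaf generated by finitely many sections lifting a finite generating set of the image in $E_{j+1}$. The key algebraic fact is that a finitely generated locally finite $k\pi_1(S)$-module is finite-dimensional over $k$, so each refined $E_j$ lies in $\Cons_{ft}$; this gives surjectivity of the map, and the analogous refinement applied to chains of morphisms between extensions gives injectivity. The main technical obstacle I foresee is the verification that $l_{S*} I_S$ lies in $\Cons_{ift}$ for each ind-finite-type injective $I_S \in \Loc(S)$: via the normal-structure description of neighborhoods, the stalk of $l_{S*} I_S$ at a point $x$ of a stratum $T \subsetneq \overline{S}$ is the $\pi_1(L_{x,S,i})$-invariants of the stalk of $I_S$, and one must check that these invariants remain locally finite as a $k\pi_1(T)$-module. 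This stability of the ind-finite-type condition under the link-invariants construction is where condition~(!) must interact essentially with the normal-structure hypothesis (Lemma~\ref{str-sp-is-ok}), and is the delicate core of the proof.
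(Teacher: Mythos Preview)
Your strategy is essentially the same as the paper's: both introduce the intermediate category $\Cons_{ift}(X,\mathcal{S})$, use condition~(!) to embed objects into $\Cons$-injectives lying in $\Cons_{ift}$, and then use the fact that every object of $\Cons_{ift}$ is a filtered union of $\Cons_{ft}$-subobjects to descend. The paper packages the last step as a direct subcomplex construction inside the injective resolution (its Steps~5--6) rather than as a Yoneda-$\Ext$ refinement, but the content is the same.

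Two corrections. First, you misattribute the role of condition~(!). The fact that $l_{S*}I_S\in\Cons_{ift}$ does \emph{not} use~(!) at all: for any $F\in\Loc_{ift}(S)$ one has $l_{S*}F\in\Cons_{ift}(X)$, simply because the stalk at $x\in T$ is $\bigoplus_i H^0(L_{x,S,i},F)$, a finite sum of invariant subspaces, and applying $l_{S*}$ to the filtered system of $\Loc_{ft}$-subsheaves of $F$ exhibits $l_{S*}F$ as a union of $\Cons_{ft}$-subsheaves. Condition~(!) is used only to produce the injective $I_S$ inside $\Loc_{ift}(S)$, nothing more. So what you call the ``delicate core'' is actually the easy step.

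Second, your Yoneda refinement needs a small patch on the left end: working right-to-left on a $p$-fold extension, the leftmost kernel $Z_1'\subset N$ you obtain may be a proper subobject, so you must push out along $Z_1'\hookrightarrow N$ (the result stays in $\Cons_{ft}$ by extension-closure) to get an honest $N$-to-$M$ extension. For injectivity, ``the analogous refinement'' is vaguer than it should be; the paper handles this by showing the $\Cons_{ft}$-subcomplex of the injective resolution can be chosen to contain any prescribed bounded $\Cons_{ft}$-subcomplex, which lets one bring a common-refinement roof in $\Cons$ down to $\Cons_{ft}$ in one step.
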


\begin{proof} \underline{Step 1.} Note that since $(X,\mathcal{S})$ is a stratified space with a normal structure the following holds:

(a) For any (connected) stratum $S$ there is an equivalence (Proposition \ref{p:fiber-functor-2})
\begin{equation}\label{recal-equiv} \Loc (S)\simeq \Mod (k\pi _1(S))
\end{equation}
 Denote by
$$\Loc _{ft}(S)\subset \Loc _{ift}(S)\subset \Loc(S)$$
the subcategories corresponding under this equivalence to the similar subcategories of $\Mod (k\pi _1(S))$. Let also
$$\Cons _{ft}(X)\subset \Cons _{ift}(X)\subset \Cons(X)$$
be the full subcategories consisting of sheaves whose restriction to any stratum $S$ belongs to the corresponding subcategory of $\Loc (S)$.

(b) For any embedding of a stratum $l\colon S\ra X$ the functor $l_*:\Loc (S)\ra \Cons (X)$ (Lemma \ref{str-sp-is-ok}) restricts to the functors
$$l_*:\Loc _{ft}(S)\ra \Cons _{ft}(X),\quad \text{and}\quad
l_*:\Loc _{ift}(S)\ra \Cons _{ift}(X)$$
Indeed, this follows from the proof of Lemma \ref{str-sp-is-ok} and the fact that any link  $L_{x,S}$ has finitely many connected components.

\medskip
\noindent\underline{Step 2.} Any $G\in \Cons _{ift}(X)$ is a union of its subsheaves which belong to $\Cons _{ft}(X)$. Indeed, let $W\subset X$ be the support of $G$ (so $W$ is a union of strata), and let $j:U\hookrightarrow W\hookleftarrow Z:i$ be the embedding of an open (in $W$) stratum $U$ and its closed complement $Z$. We have the exact sequence in $\Cons _{ift}(X)$\begin{equation}\label{short exact in const}
0\to j_!j^*G\to G\to i_*i^*G\to 0
\end{equation}
By the equivalence \eqref{recal-equiv} the sheaf $j^*G\in  \Loc _{ift}(U)$ is a union of its subsheaves from $\Loc _{ft}(U)$. Hence the sheaf $j_!j^*G$ is a union of its subsheaves from $\Cons _{it}(X)$. Hence we may replace the sheaf $G$ by the sheaf $i_*i^*G$ whose support is smaller, and conclude by induction on the size of the support.

\medskip
\noindent{\underline{Step 3.}} For any $G\in \Cons _{ift}(X)$ there exists an embedding $G\hookrightarrow I$, where $I\in \Cons (X)$ is injective and belongs to $\Cons _{ift}(X)$. Indeed, let $l\colon S\ra X$ be the inclusion of a stratum. Then the locally constant sheaf $G\vert _S$ belongs to $\Loc _{ift}(S)$. Hence by the equivalence \eqref{recal-equiv} and our assumption (!) on the algebra $k\pi _1(S)$, there exists an embedding $G\vert _S\hookrightarrow I_S$, where $I_S\in \Loc (S)$ is injective and belongs to $\Loc _{ift}(S)$. Notice that the sheaf $l_*I_S\in \Cons (X)$ is injective and belongs to $\Cons _{ift}(X)$. It remains to take $I:=\bigoplus _{S}l_*I_S$ with the diagonal embedding
$$G\hookrightarrow \bigoplus _Sl_*(G\vert _{S})\hookrightarrow \bigoplus _Sl_*I_S.$$

\medskip
\noindent{\underline{Step 4.}} For any $G^\bullet \in \C^b(Cons _{ift})$ there exists a quasi-isomorphism
$$G^\bullet \ra J^\bullet \in \C^+(\Cons _{ift}(X))$$
where the complex $J^\bullet$ consists of sheaves that are injective in $\Cons (X)$. Indeed, for each member $G^i$ of the complex $G^\bullet$ we may use repeatedly Step 2 to construct a resolution $G^i\to I^{\bullet ,i}$, consisting of sheaves in $\Cons _{ift}(X)$ which are injective in $\Cons (X)$. Then the complex $J^\bullet$ may be constructed from the complexes $I^{\bullet ,i}$ in a standard way.

\medskip
\noindent{\underline{Step 5.}} Assume that $J^\bullet \in \C^+(\Cons _{ift}(X))$ consists of injective constructible sheaves, has bounded cohomology, and each $\HH^i(J^\bullet )\in \Cons _{ft}(X)$. Then there exists a bounded subcomplex $K^\bullet \subset J^\bullet$ such that

(i) $K^\bullet \in \C^b(\Cons _{ft}(X))$;

(ii) the embedding $K^\bullet \hookrightarrow J^\bullet$ is a quasi-isomorphism.

We construct the subcomplex $K^\bullet $ by descending induction starting with the highest index $n$ such that $\HH ^n(J^\bullet )\neq 0$. Since $J^n\in \Cons _{ift}(X)$ and $\HH ^n(J^\bullet )\in \Cons _{ft}(X)$ it is easy to see using Step 2, that we can find a subsheaf $K^n\subset Z^n(J^\bullet)$ which projects onto $\HH ^n(J^\bullet )$ and belongs to $\Cons _{ft}(X)$.
$$\cdots \ra J^{n-1}\stackrel{d^{n-1}}{\ra} J^n\ra \cdots$$
Next, let $K^{n-1}\subset J^{n-1}$ be a subsheaf from $\Cons _{ft}(X)$ with the 2 properties: (1) $d^{n-1}$ maps $K^{n-1}$ surjectively onto $d^{n-1}(J^{n-1})\cap K^n$, (2) $K^{n-1}\cap Z^{n-1}(J^\bullet)$ projects surjectively onto $\HH ^{n-1}(J^\bullet)$. Then $K^{n-2}\subset J^{n-2}$ is chosen with the above properties replacing $K^n$ by $K^{n-1}$, and so on.

\medskip
\noindent{\underline{Step 6.}} Notice that in Step 4 the required subcomplex $K^\bullet \subset J^\bullet $ can be chosen to contain any given subcomplex $F^\bullet \subset J^\bullet$ from $\C^b(\Cons _{ft})$. Indeed, first replace $J^\bullet$ by $J^\bullet /F^\bullet$ and find $\overline{K}^\bullet \subset J^\bullet /F^\bullet$ with the required properties. Then take $K^\bullet$ to be the preimage of $\overline{K}^\bullet$.

\medskip
\noindent{\underline{Step 7.}} Both triangulated categories $\D ^b(\Cons _{ft}(X))$ and $D^b_{ft}(\Cons (X))$ are generated by $\Cons _{ft}(X)$. So it suffices to prove that the morphism spaces $\Hom (F,G[m])$ in the two categories are isomorphic for any $F,G\in \Cons _{ft}(X)$ and any integer $m$. Choose a resolution $s:G[m]\ra J ^\bullet$ as in Step 4. A morphism $\alpha :F\to G[m]$ in $\D ^b_{ft}(Cons (X))$ is represented by a diagram of morphisms of complexes
$$F\stackrel{f}{\ra}J^\bullet \stackrel{s}{\la}G[m]$$
Choose a subcomplex $K^\bullet \subset J^\bullet $ as in Step 5, which also contains the $f(F)$ and $s(G[m])$ (Step 6). Then $s:G[m]\to K^\bullet$ is a quasi-isomorphism and the morphism $\alpha$ is also represented by the diagram
$$F\stackrel{f}{\ra}K^\bullet \stackrel{s}{\la}G[m]$$
This shows that $\Hom (F,G[m])$ is the same in categories $\D ^b(\Cons _{ft}(X))$ and $D^b_{ft}(\Cons (X))$ and proves Theorem \ref{when eq}.
\end{proof}

\begin{corollary} \label{cor for ab fund grp} Assume that the coefficient ring $R$ is a field $k$.

Let $(X,\mathcal{S})$ be a stratified space with normal structure. Assume that for any stratum $S\in \mathcal{S}$ the fundamental group $\pi _1(S)$ is finitely generated abelian. Then the natural functor
\begin{equation}\label{f-between-const}
\D ^b(\Cons _{ft}(X,\mathcal{S}))\ra \D ^b_{ft}(\Cons (X,\mathcal{S}))
\end{equation}
is an equivalence.
\end{corollary}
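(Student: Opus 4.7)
The plan is to invoke Theorem~\ref{when eq}, which reduces the statement to checking that, for every stratum $S \in \mathcal{S}$, the group algebra $A := k\pi_1(S)$ satisfies condition~(!) of Definition~\ref{cond!}. Since $\pi_1(S)$ is a finitely generated abelian group, it decomposes as $\bZ^r \times T$ with $T$ finite, and therefore
\begin{equation*}
  A \cong k[t_1^{\pm 1},\dots,t_r^{\pm 1}] \otimes_k k[T]
\end{equation*}
is a commutative Noetherian finitely generated $k$-algebra. By Hilbert's Nullstellensatz, $A$ is a Jacobson ring whose residue fields $A/\mathfrak{m}$ at maximal ideals are finite extensions of $k$, in particular finite-dimensional over $k$.

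Given $M \in \Mod_{ift}(A)$, I would take $I$ to be the injective hull $E(M)$ of $M$ in $\Mod(A)$. The content is to verify that $E(M) \in \Mod_{ift}(A)$. First, every associated prime of $M$ is maximal: if $\mathfrak{p} = \operatorname{Ann}(x)$ for some $0 \neq x \in M$, then $A/\mathfrak{p} \cong Ax$ embeds into a finite-dimensional $A$-submodule of $M$, so $A/\mathfrak{p}$ is a finite-dimensional integral $k$-algebra, hence a field. By the standard Matlis-type decomposition over the commutative Noetherian ring $A$, we obtain
\begin{equation*}
  E(M) \cong \bigoplus_{\mathfrak{m} \in \operatorname{Ass}(M)} E(A/\mathfrak{m})^{(X_\mathfrak{m})}
\end{equation*}
with all indexing primes $\mathfrak{m}$ maximal.

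Next, each summand $E(A/\mathfrak{m})$ is itself ind-finite type over $k$: by Matlis theory it agrees with the injective hull computed over the local ring $A_\mathfrak{m}$ and equals the union $\bigcup_n \operatorname{Ann}_{E(A/\mathfrak{m})}(\mathfrak{m}^n)$ of its socle filtration, each term of which has finite length over $A_\mathfrak{m}$ with residue field $A/\mathfrak{m}$; since $A/\mathfrak{m}$ is finite over $k$, each such term is finite-dimensional over $k$. A direct sum of ind-finite type modules is again ind-finite type because any element has finite support and so lies in a finite partial direct sum, where it is contained in a finite-dimensional submodule. Hence $E(M) \in \Mod_{ift}(A)$, condition~(!) is satisfied, and Theorem~\ref{when eq} gives the equivalence. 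The main obstacle is the verification that $E(A/\mathfrak{m})$ is ind-finite type, which rests crucially on the Jacobson/Nullstellensatz property of $A$ supplying finite-dimensional residue fields.
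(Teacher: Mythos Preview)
Your proof is correct and follows essentially the same approach as the paper: reduce via Theorem~\ref{when eq} to verifying condition~(!) for the finitely generated commutative $k$-algebra $k\pi_1(S)$, and then use the Matlis structure theory of indecomposable injectives over such rings. The paper's proof is a one-line citation to \cite{Hart} for this structure theory, whereas you have spelled out explicitly why $E(M)$ is ind-finite type (associated primes are maximal, $E(A/\mathfrak{m})$ is the union of its finite-length socle layers, residue fields are finite over $k$ by the Nullstellensatz).
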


\begin{proof} By Theorem \ref{when eq} it suffices to prove than any finitely generated commutative $k$-algebra $A$ satisfies the condition (!) of Definition \ref{cond!}. This follows from the structure  of indecomposable injectives in $\Mod (A)$ \cite{Hart}.
\end{proof}

\begin{corollary}\label{cor vers toric var} Let $R$ be a field. Let $(X,\mathcal{S})$ be a complex toric variety with the orbit stratification. Then the functor \ref{real-bounded-ft} is an equivalence.
\end{corollary}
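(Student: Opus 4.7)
The plan is to factor the functor \eqref{real-bounded-ft} through the intermediate category $\D^b_{ft}(\Cons(X,\mathcal{S}))$, writing it as the composition
\begin{equation*}
\D^b(\Cons_{ft}(X,\mathcal{S})) \xra{\Phi} \D^b_{ft}(\Cons(X,\mathcal{S})) \xra{\real} \D^b_{\Cons_{ft}(X,\mathcal{S})}(X),
\end{equation*}
and to show that each of the two arrows is an equivalence.

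For the right-hand arrow, I would invoke Proposition~\ref{toric-var}, which asserts that for a complex toric variety stratified by torus orbits, the hypotheses of Theorem~\ref{our-original-example} are satisfied, so the unrestricted realization functor $\real\colon \D^+(\Cons(X,\mathcal{S})) \to \D^+_{\mathcal{S}}(X)$ is an equivalence. By the corollary stated just after Question~(*), this equivalence restricts to an equivalence $\D^+_{ft}(\Cons(X,\mathcal{S})) \sira \D^+_{\Cons_{ft}(X,\mathcal{S})}(X)$, and hence by passing to bounded subcategories (both sides being characterized by bounded cohomology) to the desired equivalence on $\D^b$.

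For the left-hand arrow $\Phi$, I would apply Corollary~\ref{cor for ab fund grp}. To do so I must verify its two hypotheses for the orbit stratification: that $(X,\mathcal{S})$ has a normal structure, and that each stratum has finitely generated abelian fundamental group. The first is precisely what Proposition~\ref{toric-var} established (the normal structure along any orbit is read off from the local affine toric neighbourhood $X' \times O$). The second is immediate because every stratum is a torus orbit, hence isomorphic as a topological space to $(\bC^*)^n$ for some $n$, and therefore has $\pi_1 \cong \bZ^n$, which is finitely generated abelian. Corollary~\ref{cor for ab fund grp} then yields that $\Phi$ is an equivalence.

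There is no real obstacle here: the whole statement is a bookkeeping combination of Proposition~\ref{toric-var} (which supplies the geometric input about toric varieties that is reused on both sides) and Corollary~\ref{cor for ab fund grp} (which handles the algebraic finiteness question using the abelian nature of the fundamental groups of tori). The only substantive point to mention is that the stratawise input needed for the left-hand arrow and that needed for the right-hand arrow both come from the same source, namely the description of the link of an orbit in a toric variety established in the proof of Proposition~\ref{toric-var}.
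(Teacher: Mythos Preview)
Your proposal is correct and follows essentially the same approach as the paper: factor the functor through $\D^b_{ft}(\Cons(X,\mathcal{S}))$, use Corollary~\ref{cor for ab fund grp} (with the observation that torus orbits have $\pi_1\cong\bZ^n$) for the left factor, and use Proposition~\ref{toric-var} (restricted to the bounded finite-type subcategories) for the right factor. Your write-up is in fact slightly more careful than the paper's in explicitly noting that the normal-structure hypothesis of Corollary~\ref{cor for ab fund grp} is supplied by Proposition~\ref{toric-var}.
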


\begin{proof} Since all strata are complex tori, their fundamental groups are finitely generated abelian. Hence by Corollary \ref{cor for ab fund grp} the functor
$$\D ^b(\Cons _{ft}(X,\mathcal{S}))\ra \D ^b_{ft}(\Cons (X,\mathcal{S}))$$
is an equivalence. But by
Proposition \ref{toric-var} the functor
$$\real \colon \D ^b_{ft}(\Cons (X,\mathcal{S}))\ra \D ^b_{\Cons _{ft}(X,\mathcal{S})}(X)$$
is also an equivalence.
\end{proof}

\def\cprime{$'$} \def\cprime{$'$} \def\cprime{$'$} \def\cprime{$'$}
  \def\Dbar{\leavevmode\lower.6ex\hbox to 0pt{\hskip-.23ex \accent"16\hss}D}
  \def\cprime{$'$} \def\cprime{$'$}

\end{document}